\def\D{\Omega}
\def\T{\mathcal{T}}
\def\E{\widetilde{E}}
\newcommand{\cT}{\mathcal{T}}
\renewcommand{\div}{\mbox{\rm div\,}}
\newcommand{\eps}{\epsilon}
\newcommand{\Ome}{\Omega}
\newcommand{\p}{\partial}
\newcommand{\nab}{\nabla}
\def\esssupT{\underset{t\in [0,T]}{\mbox{\rm ess sup }}}
\begin{document}

\title{Error analysis of a fully discrete Morley finite element approximation for the Cahn-Hilliard equation}
%\title[Sharper $H^{-1}$ error estimates of the Morley element for the Cahn-Hilliard equation]{A discrete $H^{-1}$ error bound with polynomial order of\ $\frac{1}{\epsilon}$  using the Morley element for the Cahn-Hilliard equation}

\titlerunning{The Morley finite element approximation for the Cahn-Hilliard equation} 

%    Only \author and \address are required; other information is
%    optional.  Remove any unused author tags.

%    author one information
% \author[short version for running head]{name for top of paper}

\author{Yukun Li}
\institute{Yukun Li \at
              Department of Mathematics, The Ohio State University, Columbus \\
              Tel.: 865-456-9589\\
              \email{li.7907@osu.edu}
}

\date{Received: date / Accepted: date}

\maketitle

\begin{abstract}
This paper proposes and analyzes the Morley element method for the Cahn-Hilliard equation. It is a fourth order nonlinear singular perturbation equation arises from the binary alloy problem in materials science, and its limit is proved to approach the Hele-Shaw flow. If the $L^2(\Omega)$ error estimate is considered directly as in paper \cite{elliott1989nonconforming}, we can only prove that the error bound depends on the exponential function of $\frac{1}{\epsilon}$. Instead, this paper derives the error bound which depends on the polynomial function of $\frac{1}{\epsilon}$ by considering the discrete $H^{-1}$ error estimate first. There are two main difficulties in proving this polynomial dependence of the discrete $H^{-1}$ error estimate. Firstly, it is difficult to prove discrete energy law and discrete stability results due to the complex structure of the bilinear form of the Morley element discretization. This paper overcomes this difficulty by defining four types of discrete inverse Laplace operators and exploring the relations between these discrete inverse Laplace operators and continuous inverse Laplace operator. Each of these operators plays important roles, and their relations are crucial in proving the discrete energy law, discrete stability results and error estimates. Secondly, it is difficult to prove the discrete spectrum estimate in the Morley element space because the Morley element space intersects with the $C^1$ conforming finite element space but they are not contained in each other. Instead of proving this discrete spectrum estimate in the Morley element space, this paper proves a generalized coercivity result by exploring properties of the enriching operators and using the discrete spectrum estimate in its $C^1$ conforming relative finite element space, which can be obtained by using the spectrum estimate of the Cahn-Hilliard operator. The error estimate in this paper provides an approach to prove the convergence of the numerical interfaces of the Morley element method to the interface of the Hele-Shaw flow.
\keywords{Morley element \and Cahn-Hilliard equation \and generalized coercivity result \and conforming relative \and Hele-Shaw flow.}
\subclass{65N12 \and 65N15 \and 65N30}
\end{abstract}

%\begin{keywords}
%Morley element, Cahn-Hilliard equation, generalized coercivity result, conforming relative, Hele-Shaw flow.
%\end{keywords}
%
%\begin{AMS}
%65N12, %Stability and convergence of numerical methods
%65N15, %Error bounds
%65N30. %Finite elements, Rayleigh-Ritz and Galerkin methods, finite methods
%\end{AMS}

\section{Introduction}
Consider the following Cahn-Hilliard problem:
\begin{alignat}{2}
u_t +\Delta(\epsilon\Delta u -\frac{1}{\epsilon}f(u)) &=0  &&\quad \mbox{in } \Omega_T:=\Omega\times(0,T],\label{eq20170504_1}\\
\frac{\partial u}{\partial n}
=\frac{\partial}{\partial n}(\epsilon\Delta u-\frac{1}{\epsilon}f(u))
&=0 &&\quad \mbox{on } \partial\Omega_T:=\partial\Omega\times(0,T],
\label{eq20170504_2}\\
u &=u_0 &&\quad \mbox{in } \Omega\times\{t=0\},\label{eq20170504_3}
\end{alignat}
where $\Omega\subseteq \mathbf{R}^2$ is a bounded domain, $f(u)$ is the first derivative of a double well potential $F(u)$ which is defined below
\begin{equation}\label{eq20170504_5}
F(u)=\frac{1}{4}(u^2-1)^2.
\end{equation}

The Allen-Cahn equation \cite{Allen_Cahn79,Bartels_Muller_Ortner09,Chen94,Feng_Prohl03,feng2014finite,feng2017finite,Ilmanen93,kovacs2011finite}, which is a second order nonlinear parabolic equation, describes the phase separation process of a binary alloy when the temperature suddenly decreases, but the mass of each phase is not conserved. Compared with the Allen-Cahn equation, the Cahn-Hilliard equation \eqref{eq20170504_1} also arises from the phase transition problem in materials science, but it has the mass conservation property. Notice equation \eqref{eq20170504_1} differs from the original Cahn-Hilliard equation by scaling $\frac{t}{\epsilon}$ by $t$. The Cahn-Hilliard equation finds its applications in the areas of materials science, fluid mechanics, biology and so on, and the coupling of the Cahn-Hilliard equation and fluid flow is becoming more and more popular in industrial applications. The Cahn-Hilliard equation also serves as a building block for the phase field formulations of the moving interface problems, and the methodology can be applied to other phase field models.
It is also well known \cite{Alikakos94} that the Cahn-Hilliard equation 
\eqref{eq20170504_1} can be interpreted as the $H^{-1}$ gradient flow of the Cahn-Hilliard energy functional
\begin{align}\label{eq2.1}
J_\epsilon(v):= \int_\Omega \Bigl( \frac\eps2 |\nabla v|^2+ \frac{1}{\epsilon} F(v) \Bigr)\, dx.
\end{align}
Stoth proved that $u\rightarrow\pm1$ in the interior or exterior of interface $\Gamma_t$ for all $t\in[0,T]$ as $\epsilon\rightarrow0$ for the radially symmetric case \cite{Stoth96}, and Alikakos, Bates and Chen gave the proof for the general case \cite{Alikakos94}. %Chen also proved \cite{Chen96} that the convergence of the weak solution of the Cahn-Hilliard equation to a weak solution of the Hele-Shaw problem. 

Numerical approximations of the Cahn-Hilliard equation have been extensively studied during the last 30 years \cite{Aristotelous12,xu2016convex,du1991numerical,elliott1989nonconforming,cheng2017energy}. These papers consider the case when $\epsilon$ is a fixed, and the error bounds depend exponentially on $\frac{1}{\epsilon}$. Better than the exponential dependence on $\frac{1}{\epsilon}$, the polynomial dependence on $\frac{1}{\epsilon}$ is proved using conforming finite element (CG) method \cite{Feng_Prohl04,Feng_Prohl05} and discontinuous Galerkin (DG) method \cite{feng2016analysis,li2015numerical}. For the $C^1$ conforming finite elements for the fourth order problem, polynomials with high degree are required. To use lower order polynomials, one approach is to use macro-elements, where a given element is divided into a few smaller subelements and the lower order polynomial is used on each subelement. However, it is not widely used due to its complex formulation of finite element spaces. The other approach is to use nonconforming finite elements, and among the nonconforming finite elements, the Morley element has the least number of degrees of freedom on each element. Comparing with the mixed finite element method or the $C^1$ conforming finite element method, the computational cost of the Morley element is smaller, and this is extremely important especially for the phase field problems where the interaction length $\epsilon$, time step size $k$, and mesh size $h$ are all required to be chosen very small. The Morley element was first used in \cite{elliott1989nonconforming} to discretize the Cahn-Hilliard equation, but only the error estimates with exponential dependence of $\frac{1}{\epsilon}$ could be derived there.
In this paper, the polynomial dependence of $\frac{1}{\epsilon}$ is finally given using the Morley element.

The approach in this paper follows those used in \cite{feng2016analysis,Feng_Prohl04,Feng_Prohl05}, but the generalization to the Morley element method is nontrivial. In the mixed CG/DG formulation, different test functions can be chosen in two equations, but in the Morley element formulation, only one test function can be chosen. Because of this and the complex structure of the Morley element formulation, proving the discrete energy law and the discrete stability results become much more involved. It is also a challenge to prove the discrete spectrum estimate in the Morley element space from the spectrum estimate of the Cahn-Hilliard operator because the Morley element space has intersection with its $C^1$ conforming relative finite element space but they are not contained in each other. If the $L^2$ error estimate is considered directly, the generalized coercivity result in this paper or even the discrete spectrum estimate are not useful in proving the $L^2$ error estimate. To overcome these difficulties, there are three main techniques in this paper. First, based on the structure of the bilinear form of the Morley element formulation, this paper designs four discrete operators $\hat{\Delta}^{-1}_h, \widetilde{\Delta}^{-1}_h$, $\underline{\Delta}^{-1}_h$ and $\Delta^{-1}_h$, and proves the errors in different norms between these operators. Through these relations, by using $\widetilde{\Delta}^{-1}_h$ in the test function, and by using the other operators as bridges, we can prove the discrete energy law and some consequent discrete stability results. Each of these operators plays important roles in proving the main results. These operators and their properties might be applied to the analysis for the biharmonic equation. It also employs both the summation by part for time and integration by part for space techniques to handle the nonlinear term and then to establish the polynomial dependence of the $\|\cdot\|_{2,2,h}$ stability result, and only the exponential dependence can be obtained if these two techniques are not used simultaneously. Second, instead of proving the discrete spectrum estimate, this paper proves the generalized coercivity result which is sufficient to get the sharper error estimates. The key point is to use the enriching operator as a bridge between the nonconforming and conforming finite elements, and this idea may be extended to other phase field models. Third, if the discrete $L^2$ error estimate is considered directly, only the error estimates with exponential dependence on $\frac{1}{\epsilon}$ could be derived using the Gronwall's inequality as in \cite{elliott1989nonconforming}. This paper provides a possibility by considering the $H^{-1}$ error estimate first, and it explains how to utilize the discrete inverse Laplace operators and the generalized coercivity result to circumvent the Gronwall's inequality, and finally to prove the error estimate with polynomial dependence on $\frac{1}{\epsilon}$. %This is the first paper to prove the polynomial dependence on $\frac{1}{\epsilon}$ using the primal finite element method, instead of the mixed finite element method.

The remainder of this paper is organized as follows. In section 2,  we introduce the standard function and Sobolev space notations, state a few a priori estimates of the solution, and cite some known results including properties of the inverse Laplapce operators, properties of enriching operator, generalized
discrete Gronwall's inequality and the spectrum estimate for the linearized Cahn-Hilliard operator; In section 3, and in the first two subsections, we introduce the Morley element formulation, define different kinds of discrete inverse Laplace operators and state their relations. Then in the last three subsections, we analyze the discrete energy law and the discrete stability results, derive the generalized coercivity result in the Morley element space, and finally prove the discrete $H^{-1} $ error estimate with polynomial dependence on $\frac{1}{\epsilon}$; In Section \ref{sec4}, numerical experiments are given to validate the theoretical results.

\section{Preliminaries}
In this section, we cite some known results about problem \eqref{eq20170504_1}--\eqref{eq20170504_5}, and they will be used in the following sections. These results can be proved under some general assumptions on the initial condition \cite{Chen94,Feng_Prohl04,Feng_Prohl05,feng2016analysis,li2015numerical}. 
Throughout this paper, $C$ denotes a generic positive constant, which may have different values at different occasions, is independent of interfacial length $\epsilon$, spacial size $h$, and time step size $k$. The following Sobolev notations are used in this paper, i.e., for any set $A$,
\begin{alignat*}{2}
\|v\|_{0,p,A}&=\bigg(\int_{A}|v|^pdx\bigg)^{1\slash p}\qquad &&1\le p<\infty,\\
\|v\|_{0,\infty,A}&=\underset{A}{\mbox{\rm ess sup }} |v|,\\
|v|_{m,p,A}&=\bigg(\sum_{|\alpha|=m}\|D^{\alpha}v\|_{0,p,A}^p\bigg)^{1\slash p}\qquad &&1\le p<\infty,\\
\|v\|_{m,p,A}&=\bigg(\sum_{j=0}^m|v|_{m,p,A}^p\bigg)^{1\slash p}.
\end{alignat*}

If $A$ is the whole domain, i.e., $A=\Omega$, then $\|\cdot\|_{H^k}, \|\cdot\|_{L^k}$ are used to simplify the notations $\|\cdot\|_{H^k(\Omega)}, \|\cdot\|_{L^k(\Omega)}$ respectively. Besides, assume $\T_h$ to be a family of quasi-uniform triangulations of domain $\Omega$, and $\mathcal{E}_h$ to be a collection of edges, then for any triangle $K\in\T_h$, define the following mesh dependent semi-norm, norm and inner product
\begin{align*}
|v|_{j,p,h}&=\bigg(\sum_{K\in\T_h}|v|_{j,p,K}^p\bigg)^{1\slash p},\\
\|v\|_{j,p,h}&=\bigg(\sum_{K\in\T_h}\|v\|_{j,p,K}^p\bigg)^{1\slash p},\\
(w,v)_h&=\sum_{K\in\T_h}\int_Kw(x)v(x)dx.
\end{align*}

Theoretically, the $tanh$ profile of the initial condition $u_0$ is required to prove the relations between the Cahn-Hilliard equation and the Hele-Shaw flow \cite{Alikakos94,Chen94}. Because of the $tanh$ profile, the following assumptions can be made on the initial condition, and they were used to derive a priori estimates for the solution of problem \eqref{eq20170504_1}--\eqref{eq20170504_5} \cite{feng2016analysis,Feng_Prohl04,Feng_Prohl05,li2015numerical}.

{\bf General Assumption} (GA)
\begin{itemize}
\item[(1)] Assume that $m_0\in (-1,1)$ where
\begin{equation}\label{eq2.2}
m_0:=\frac{1}{|\Omega|}\int_{\Omega}u_0(x)dx. 
\end{equation}
\item[(2)] There exists a nonnegative constant $\sigma_1$ such that
\begin{equation}\label{eq2.3}
J_{\epsilon}(u_0)\leq C\epsilon^{-2\sigma_1}.
\end{equation}
\item[(3)]
There exists nonnegative constants $\sigma_2$, $\sigma_3$ and $\sigma_4$ such that
\begin{align}\label{eq2.4}
\big\|-\epsilon\Delta u_0 +\epsilon^{-1} f(u_0)\big\|_{H^{\ell}} \leq C\epsilon^{-\sigma_{2+\ell}}\qquad
\ell=0,1,2.
\end{align}
\end{itemize}

Under the above assumptions, the following a priori estimates of the solution were proved in \cite{feng2016analysis,Feng_Prohl04,Feng_Prohl05,li2015numerical}.

\begin{theorem}\label{prop2.1}
The solution $u$ of problem \eqref{eq20170504_1}--\eqref{eq20170504_5} satisfies the following energy estimate:
\begin{align}
&\esssupT  \Bigl( \frac{\epsilon}{2}\|\nabla u\|_{L^2}^2 +\frac{1}{\epsilon}\|F(u)\|_{L^1} \Bigr)
+\begin{cases} \int_{0}^{T}\|u_t(s)\|_{H^{-1}}^2\, ds\\
\int_{0}^{T}\|\nabla w(s)\|_{L^2}^2\,ds 
\end{cases}  
\leq J_{\epsilon}(u_0)\label{eq2.5}.
%&\esssupI\|u\|_{L^4}^4\leq C(1+J_{\epsilon}(u_0)), \label{eq2.6}\\
%&\esssupI \|u^2-1\|_{L^2}^2\leq C\epsilon J_{\epsilon}(u_0).  \label{eq2.7}
\end{align}
Moreover, suppose that \eqref{eq2.2}--\eqref{eq2.4} hold, $u_0\in H^4(\Omega)$ and $\p\Omega\in C^{2,1}$, 
then $u$ satisfies the additional estimates:
\begin{align}
&\frac{1}{|\Omega|}\int_{\Omega}u(x,t)\, dx=m_0 \quad\forall t\geq 0, \label{eq2.8}\\
%&\int_0^{\infty}\|\Delta u\|_{L^2}^2ds\leq C\epsilon^{-(2\sigma_1+3)}, \label{eq2.9}\\
%&\int_0^{\infty}\|\nabla\Delta u\|_{L^2}^2ds\leq C\epsilon^{-(2\sigma_1+5)},\label{eq2.10}\\
%
%&\begin{cases}
%\esssupI\|u_t\|_{H^{-1}}^2\\
%\esssupI\|\nabla w\|_{L^{2}}^2
%\end{cases}
%+\epsilon\int_{0}^{\infty}\|\nabla u_t\|_{L^2}^2ds\leq C\epsilon^{-\max\{2\sigma_1+3,2\sigma_3\}},\label{eq2.11}\\
%&\esssupI\|\Delta u\|_{L^2}\leq C\epsilon^{-\max\{\sigma_1+\frac{5}{2},\sigma_3+1\}},\label{eq2.12}\\
%
&\esssupT\|\nabla\Delta u\|_{L^2}\leq C\epsilon^{-\max\{\sigma_1+\frac{5}{2},\sigma_3+1\}}.\label{eq2.13_add}
%
%&\begin{cases}
%\int_0^{\infty}\|u_t\|_{L^2}^2ds\\
%\int_0^{\infty}\|\Delta w\|_{L^2}^2ds
%\end{cases}
%+\esssupI\epsilon\|\Delta u\|_{L^2}^2\leq C\epsilon^{-\max\{2\sigma_1
%+\frac{7}{2},2\sigma_3+\frac{1}{2},2\sigma_2+1\}},\label{eq2.14_add}\\
%%
%&\epsilon\int_0^{\infty}\|\Delta u_t\|_{L^2}^2ds+\esssupI\|u_t\|_{L^2}^2
%\leq C\epsilon^{-\max\{2\sigma_1+\frac{13}{2},2\sigma_3+\frac{7}{2},2\sigma_2+4,2\sigma_4\}},\label{eq2.15_add}\\
%%
%&\int_0^{\infty}\|\Delta^{-1} u_{tt}(s)\|_{H^{-1}}^2ds\leq C\epsilon^{-\max\{10\sigma_1+10,4\sigma_1+2\sigma_2+5,2\sigma_3-1\}}.\label{eq2.16_add}
\end{align}
%Here $\rho_3(\epsilon):=\epsilon^{-\max\{10\sigma_1+10,4\sigma_1+2\sigma_2+5,2\sigma_3-1\}}$.
Furthermore, if there exists $\sigma_5>0$ such that
\begin{equation}\label{eq2.17}
\mathop{\rm{lim}}_{s\rightarrow0^{+}}\limits\|\nabla u_t(s)\|_{L^2}\leq C\epsilon^{-\sigma_5},
\end{equation}
then there holds
\begin{align}
%&\esssupI\|\nabla u_t\|_{L^2}^2 + \epsilon\int_0^{\infty}\|\nabla\Delta u_t\|_{L^2}^2 ds 
%\leq C\rho_0(\epsilon),\label{eq2.18}\\
%
&\int_0^{T}\|u_{tt}\|_{H^{-1}}^2ds \leq C\tilde\rho_1(\epsilon),\label{eq2.19}
%
%&\esssupI\|\Delta^2 u\|_{L^2} \leq C\rho_2(\epsilon),\label{eq2.20}
\end{align}
where
\begin{align*}
\tilde\rho_1(\epsilon) &:=\epsilon^{-\frac{1}{2}\max\{2\sigma_1+5,2\sigma_3+2\}
-\max\{2\sigma_1+\frac{13}{2},2\sigma_3+\frac{7}{2},2\sigma_2+4\}+1} +\epsilon^{-2\sigma_5+1}\\
&\qquad 
+\epsilon^{-\max\{2\sigma_1+7,2\sigma_3+4\}+1}.
\end{align*}
\end{theorem}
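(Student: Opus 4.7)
The plan is to establish each a priori bound by testing the Cahn-Hilliard equation, or a suitable time-differentiated version of it, against a carefully chosen function, using the chemical potential $w := -\epsilon\Delta u + \epsilon^{-1}f(u)$ so that the PDE reads $u_t = \Delta w$ in $\Omega$ with $\partial_n u = \partial_n w = 0$ on $\partial\Omega$. I would begin with \eqref{eq2.5}: multiplying $u_t = \Delta w$ by $w$ and integrating by parts yields $(u_t,w) + \|\nabla w\|_{L^2}^2 = 0$, while expanding the inner product gives $(u_t,w) = \epsilon(\nabla u,\nabla u_t) + \epsilon^{-1}(f(u),u_t) = \tfrac{d}{dt} J_\epsilon(u)$. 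Integration in $t$ and use of \eqref{eq2.3} furnishes the $L^\infty_t$ bound on $J_\epsilon(u)$ together with $\int_0^T \|\nabla w\|_{L^2}^2\, ds \leq J_\epsilon(u_0)$. The $\|u_t\|_{H^{-1}}$ piece follows from the identity $u_t = \Delta w$ together with the definition of the $H^{-1}$ norm via $-\Delta$ on the zero-mean subspace: $\|u_t\|_{H^{-1}} \leq \|\nabla w\|_{L^2}$.

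Next, for mass conservation \eqref{eq2.8} I would integrate \eqref{eq20170504_1} over $\Omega$, noting that $\int_\Omega \Delta(\epsilon\Delta u - \epsilon^{-1}f(u))\,dx = \int_{\partial\Omega} \partial_n(\epsilon\Delta u - \epsilon^{-1}f(u))\,dS = 0$ by \eqref{eq20170504_2}, so $\frac{d}{dt}\int_\Omega u\,dx = 0$ and the initial average $m_0$ is preserved. For the higher order estimate \eqref{eq2.13_add} I would exploit the definition of $w$, which gives the pointwise identity $\epsilon\nabla\Delta u = \nabla w - \epsilon^{-1} f'(u)\nabla u$, and combine an $L^\infty$ bound on $u$ (inherited from $J_\epsilon$ and Sobolev embedding in two dimensions) with an $L^2$ bound on $\nabla w$ obtained by differentiating $u_t = \Delta w$ in time and testing against $w_t$, or alternatively by invoking \eqref{eq2.4} at $\ell=1$ to propagate the initial regularity. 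Tracking the $\epsilon$-powers carefully through these two contributions is what produces the exponent $\max\{\sigma_1+5/2,\sigma_3+1\}$.

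For the $u_{tt}$ estimate \eqref{eq2.19} under the additional hypothesis \eqref{eq2.17}, I would differentiate the equation in time to obtain $u_{tt} = \Delta w_t$ with $w_t = -\epsilon\Delta u_t + \epsilon^{-1} f'(u)u_t$, so that $\|u_{tt}\|_{H^{-1}} \leq \|\nabla w_t\|_{L^2}$. Testing this differentiated equation with $w_t$ would produce $\tfrac12\frac{d}{dt}(\epsilon\|\nabla u_t\|_{L^2}^2) + \|\nabla w_t\|_{L^2}^2 + \epsilon^{-1}(f'(u)u_t,u_{tt})$, and I would absorb the indefinite nonlinear term using H\"older together with the $L^\infty$ bound on $u$, the a priori estimate \eqref{eq2.5} on $\nabla u$, and the two-dimensional Gagliardo–Nirenberg inequality. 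Integrating from $0$ to $T$ and inserting the initial bound \eqref{eq2.17} gives the desired estimate, with $\tilde\rho_1(\epsilon)$ arising by collecting the $\epsilon$-powers from the three sources of blow-up: the $\|\nabla\Delta u\|_{L^2}$ bound, the initial datum assumption, and the remainder $\|f(u_0)\|_{H^2}$-type quantity controlled by \eqref{eq2.4}.

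The main obstacle I expect is preserving polynomial $\epsilon^{-1}$ dependence throughout: a naive Gr\"onwall application to the $w_t$ testing would introduce $\exp(C\epsilon^{-k}T)$, which would destroy the quantitative character of the estimate. Avoiding this requires treating the nonlinear term by integration by parts and by exploiting the $H^{-1}$ gradient flow structure (so that the problematic factor $\epsilon^{-1}f'(u)$ is paired against differences of $u_t$ that are already controlled by the energy), rather than by a direct Gr\"onwall argument. Once this is handled, the explicit form of $\tilde\rho_1(\epsilon)$ follows by bookkeeping of the exponents in \eqref{eq2.3}, \eqref{eq2.4}, \eqref{eq2.13_add} and \eqref{eq2.17}.
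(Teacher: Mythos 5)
The paper does not actually prove Theorem \ref{prop2.1}: it imports these estimates wholesale from \cite{Feng_Prohl04,Feng_Prohl05,feng2016analysis,li2015numerical}, so your proposal has to be measured against the standard arguments in those references. Judged that way, your outline follows the same route they do: the energy law by testing $u_t=\Delta w$ with $w$ together with the identity $\|u_t\|_{H^{-1}}=\|\nabla w\|_{L^2}$ (which gives both branches of the ``cases'' in \eqref{eq2.5}), mass conservation \eqref{eq2.8} by integrating the equation and using \eqref{eq20170504_2}, the splitting $\epsilon\|\nabla\Delta u\|_{L^2}\le\|\nabla w\|_{L^2}+\epsilon^{-1}\|f'(u)\nabla u\|_{L^2}$ for \eqref{eq2.13_add}, and time differentiation plus $H^{-1}$ duality for \eqref{eq2.19}. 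Your closing observation---that a naive Gronwall argument would produce $\exp(C\epsilon^{-k})$ and must be replaced by pairing the nonlinearity against quantities already controlled by the energy, via $H^{-1}$--$H^1$ interpolation---is precisely the mechanism that keeps the dependence on $\frac1\epsilon$ polynomial in those papers. (One small conflation: the $\esssupT\|\nabla w\|_{L^2}$ bound comes from testing the \emph{undifferentiated} equation with $w_t$; testing the time-differentiated equation with $w_t$ is the separate computation that yields \eqref{eq2.19}.)

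There is, however, one step that fails as written: you claim the $L^\infty$ bound on $u$ is ``inherited from $J_\epsilon$ and Sobolev embedding in two dimensions.'' In two dimensions $H^1(\Omega)$ does \emph{not} embed into $L^\infty(\Omega)$, so the energy bound $\frac{\epsilon}{2}\|\nabla u\|_{L^2}^2+\frac1\epsilon\|F(u)\|_{L^1}\le C\epsilon^{-2\sigma_1}$ gives no $L^\infty$ control at all. This is not cosmetic: the uniform bound $\|f'(u)\|_{L^\infty}\le C$ with $C$ independent of $\epsilon$ is exactly what produces the exponent $\sigma_1+\frac52$ in \eqref{eq2.13_add}, through $\epsilon^{-2}\|f'(u)\nabla u\|_{L^2}\le C\epsilon^{-2}\|\nabla u\|_{L^2}\le C\epsilon^{-\sigma_1-\frac52}$, and the same bound (together with $\|f''(u)\|_{L^\infty}$) is needed to absorb $\epsilon^{-1}(f'(u)u_t,u_{tt})$ in the proof of \eqref{eq2.19}. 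If instead one first derives an $H^2$ bound on $u$ (via $\epsilon\Delta u=\epsilon^{-1}f(u)-w$) and then uses $H^2\hookrightarrow L^\infty$, the resulting $L^\infty$ bound is only polynomial in $\epsilon^{-1}$, and the exponents stated in the theorem would degrade accordingly. In the literature---and indeed in this very paper, see \eqref{eq20170813_2add} in the proof of Theorem \ref{thm3.7_add}---the uniform bound $\|u\|_{L^\infty((0,T);L^\infty)}\le C_3$ is a separate, nontrivial input tied to the well-prepared ($\tanh$-profile) initial data, not a consequence of the energy functional. Your argument needs either to invoke that bound explicitly as a hypothesis or citation, or to carry the polynomial loss from the $H^2$ route through the bookkeeping and re-derive the exponents; as sketched, the exponents $\max\{\sigma_1+\frac52,\sigma_3+1\}$ and the form of $\tilde\rho_1(\epsilon)$ do not follow.
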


The next lemma gives an $\epsilon$-independent low bound for the principal eigenvalue of the linearized Cahn-Hilliard operator, and a proof of this lemma can be found in \cite{Chen94}. 
\begin{lemma}\label{lem3.4}
Suppose that \eqref{eq2.2}--\eqref{eq2.4} hold. Given a smooth initial curve/surface $\Gamma_0$, 
let $u_0$ be a smooth function satisfying $\Gamma_0 = \{x\in\Omega; u_0(x)=0\}$ and some profile 
described in \cite{Chen94}. Let $u$ be the solution to problem \eqref{eq20170504_1}--\eqref{eq20170504_5}. 
Define $\mathcal{L}_{CH}$ as
\begin{equation}\label{eq3.25a}
\mathcal{L}_{CH} := \Delta\left(\eps\Delta-\frac{1}{\eps}f'(u)I\right).
\end{equation}
Then there exists $0<\epsilon_0<<1$ and a positive constant $C_0$ such that the principle 
eigenvalue of the linearized Cahn-Hilliard operator $\mathcal{L}_{CH}$ satisfies 
\begin{equation}\label{eq3.25}
\lambda_{CH}:=\mathop{\inf}_{\substack{0\neq\psi\in H^1{(\Omega)}\\ \Delta w=\psi}}
\limits\frac{\epsilon\|\nabla\psi\|_{L^2}^2+\frac{1}{\epsilon}(f'(u)\psi,\psi)}{\|\nabla w\|_{L^2}^2}\geq -C_0
\end{equation}
for $t\in [0,T]$ and $\eps\in (0,\eps_0)$.\\
\end{lemma}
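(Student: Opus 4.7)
The plan is to transfer the well-known spectrum estimate for the linearized Allen--Cahn operator $\mathcal{L}_{AC}:=-\epsilon\Delta+\frac{1}{\epsilon}f'(u)I$ to the Cahn--Hilliard setting, by exploiting the fact that the $\|\nabla w\|_{L^2}^2$ denominator is exactly the squared $H^{-1}$-norm of $\psi$, which is much larger than $\|\psi\|_{L^2}^2$ for the problematic (interface-concentrated) test functions. First I would recall the Allen--Cahn bound: for smooth $u$ generated from a $\tanh$-profile over the evolving curve $\Gamma_t$, one has
\begin{equation*}
\epsilon\|\nabla\psi\|_{L^2}^2+\frac{1}{\epsilon}(f'(u)\psi,\psi)\ge -\Lambda\|\psi\|_{L^2}^2 \qquad \forall\,\psi\in H^1(\Omega),
\end{equation*}
with $\Lambda$ independent of $\epsilon$; this follows by freezing $f'(u)$ on a tubular neighborhood of $\Gamma_t$, changing variables to signed-distance/tangential coordinates, and comparing against the 1D linearized operator whose kernel is the derivative of the standing-wave profile.

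Next, I would perform a spectral decomposition $\psi=\alpha\varphi_1+\psi^\perp$, where $\varphi_1$ is the $L^2$-normalized principal eigenfunction of $\mathcal{L}_{AC}$ with eigenvalue $\lambda_1(\epsilon)$ and $\psi^\perp\perp_{L^2}\varphi_1$. For the orthogonal component, the spectral gap of $\mathcal{L}_{AC}$ supplies an $\epsilon$-independent lower bound $\lambda_2>0$, so that the numerator controls $\lambda_2\|\psi^\perp\|_{L^2}^2$; combining this with the interpolation-type inequality $\|\psi^\perp\|_{L^2}^2\le C\|\nabla w^\perp\|_{L^2}\|\nabla\psi^\perp\|_{L^2}$ (where $\Delta w^\perp=\psi^\perp$) and Cauchy--Schwarz absorbs the $\epsilon\|\nabla\psi^\perp\|_{L^2}^2$ gradient term and leaves a clean bound of the form $-C\|\nabla w^\perp\|_{L^2}^2$ on this piece.

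The main obstacle, and the step that forces the use of the $H^{-1}$ denominator, is the principal component $\alpha\varphi_1$, since $\lambda_1(\epsilon)$ can be negative. The key quantitative input is two-fold: (i) because $\varphi_1$ is a perturbation of the would-be zero mode of the 1D linearized problem, one has $\lambda_1(\epsilon)=O(\epsilon)$ (the shape-derivative direction is an approximate kernel, not an exact one, due to the curvature of $\Gamma_t$); and (ii) because $\varphi_1$ is concentrated in an $O(\epsilon)$-tube around $\Gamma_t$, a direct duality argument using $|\int\varphi_1\,\phi\,dx|\le\|\varphi_1\|_{L^1}\|\phi\|_{L^\infty}$ and Agmon-type estimates yields $\|\varphi_1\|_{H^{-1}}^2\ge c\,\epsilon$. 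Together these give
\begin{equation*}
\alpha^2\lambda_1(\epsilon)\ge -C\epsilon\,\alpha^2 \ge -C\epsilon\cdot \tfrac{1}{c\epsilon}\|\alpha\varphi_1\|_{H^{-1}}^2 = -C'\|\nabla w_1\|_{L^2}^2,
\end{equation*}
where $\Delta w_1=\alpha\varphi_1$, which is exactly the desired form.

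Finally I would reassemble the two pieces, handling the cross term $(f'(u)\varphi_1,\psi^\perp)$ by the eigenfunction orthogonality (which kills the leading-order contribution) and Young's inequality on the remainder, and compare $\|\nabla w\|_{L^2}^2$ with $\|\nabla w_1\|_{L^2}^2+\|\nabla w^\perp\|_{L^2}^2$ using the fact that the two pieces are $H^{-1}$-quasi-orthogonal up to $O(\epsilon)$ errors coming from the $L^2$-orthogonality. Choosing $\epsilon_0$ small enough to absorb these lower-order terms yields $\lambda_{CH}\ge -C_0$ uniformly for $t\in[0,T]$ and $\epsilon\in(0,\epsilon_0)$. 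The delicate point, and the one where Chen's profile assumptions are essential, is the sharp estimate $\lambda_1(\epsilon)=O(\epsilon)$; a naive application of the Allen--Cahn bound would only give $-\Lambda\alpha^2$, which cannot be converted to a multiple of $\|\nabla w\|_{L^2}^2$ without an extra factor of $\epsilon^{-1}$.
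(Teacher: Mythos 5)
The paper itself offers no proof of this lemma: it is quoted as a known result whose proof is attributed to Chen \cite{Chen94}, so your attempt has to stand on its own as a reconstruction of that argument. Its overall shape --- reduce to the Allen--Cahn form, isolate the near-kernel direction, and exploit the fact that an interface-concentrated, sign-definite function has $H^{-1}$-norm at least $c\epsilon^{1/2}$ --- is genuinely in the spirit of Chen's proof, and your treatment of the single principal mode (combining $\lambda_1(\epsilon)=O(\epsilon)$ with $\|\varphi_1\|_{H^{-1}}^2\geq c\epsilon$) is the right computation for that one direction.

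The genuine gap is your claim that $\mathcal{L}_{AC}=-\epsilon\Delta+\frac{1}{\epsilon}f'(u)I$ has an $\epsilon$-independent spectral gap above $\lambda_1$, so that on $\psi^{\perp}\perp_{L^2}\varphi_1$ the form is bounded below by $\lambda_2\|\psi^{\perp}\|_{L^2}^2$ with $\lambda_2>0$ independent of $\epsilon$. In the setting of this paper, where $\Omega\subseteq\mathbf{R}^2$ and $\Gamma_t$ is a closed curve, this is false: the linearization about an interface carries an entire cluster of low-lying modes, approximately of the form $\theta_0'(d(x)/\epsilon)Y_k(s)$ with $Y_k$ tangential Fourier modes along $\Gamma_t$, whose Rayleigh quotients behave like $\lambda_1+c\,\epsilon k^2$. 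The spacing above $\lambda_1$ is therefore only $O(\epsilon)$, and the number of eigenvalues below any fixed constant grows like $\epsilon^{-1/2}$; projecting out one eigenfunction yields no coercivity on the complement, so the second step of your argument collapses. Worse, these tangential modes are exactly the functions for which your $H^{-1}$ lower bound fails: oscillation at tangential frequency $k$ kills the duality pairing with smooth test functions, giving $\|\theta_0'(d/\epsilon)Y_k\|_{H^{-1}}^2\approx \epsilon/k$ after $L^2$-normalization, so the sign-definiteness/Agmon argument used for $\varphi_1$ does not transfer to them. A correct proof must handle the whole cluster at once --- e.g., writing $\psi$ near $\Gamma_t$ as $c(s)\theta_0'(d/\epsilon)+\psi_R$ with a tangentially varying coefficient and balancing, frequency by frequency, the $\epsilon k^2$ gain in the quadratic form against the $k^{-1}$ loss in the $H^{-1}$ norm --- and this balance is precisely the hard content of Chen's proof that is missing from your sketch. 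Two smaller inaccuracies: since $\varphi_1$ and $\psi^{\perp}$ are an eigenfunction and its $L^2$-orthogonal complement for the self-adjoint $\mathcal{L}_{AC}$, the cross term in the quadratic form vanishes identically, so there is nothing to ``kill''; and the ``$H^{-1}$-quasi-orthogonality up to $O(\epsilon)$'' of the two pieces invoked at the end is an unsupported assertion, since $L^2$-orthogonality gives no control over $H^{-1}$ inner products, particularly for oscillatory $\psi^{\perp}$.
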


\begin{remark}
\begin{enumerate}
\item A discrete version of the spectrum estimate of \eqref{eq3.25} on conforming finite element spaces was proved in \cite{Feng_Prohl04,Feng_Prohl05}, and a discrete version on discontinuous Galerkin finite element space was proved in \cite{feng2016analysis}. They play crucial roles in the proofs of the convergence of the numerical interfaces to the Hele-Shaw flow \cite{Feng_Prohl04,Feng_Prohl05,feng2016analysis}.

%(b) The infimum is taken in $H^1$ space, so the conforming finite element space is contained in $H^1$ space. However, the nonconforming space, i.e., 

\item In the assumption, the initial function $u_0$ should be chosen to satisfy some profile to guarantee the convergence results. A simple function satisfying this profile is $u_0=\tanh(\frac{d_0(x)}{\eps})$, where $d_0(x)$ denotes the signed distance function to the initial interface $\Gamma_0$. Assume $u$ is an arbitrary function, instead of being the solution of the Cahn-Hilliard equation, we can find a low bound of $\lambda_{CH}$, which depends on $\frac{1}{\epsilon}$ polynomially, by interpolating $L^2(\Omega)$ space to $H^1(\Omega)$ and $H^{-1}(\Omega)$ spaces.
\end{enumerate}
\end{remark}

The classical discrete Gronwall's inequality is a main technique to derive the error estimates of fully discretized scheme for partial differential equation (PDE) problems. However, for many nonlinear PDE problems, the classical discrete Gronwall's inequality can not be applied because of nonlinearity. Instead, a generalized version discrete Gronwall's inequality is needed. In case of the power (or Bernoulli-type) nonlinearity, a generalized continuous Gronwall's inequality was proved in \cite{feng2005posteriori}, and its discrete counterpart is stated below. The proof of this generalized discrete Gronwall's inequality can be found in \cite{Pachpatte}.

\begin{lemma}\label{lem2.1}
Let $\{S_{\ell} \}_{\ell\geq 1}$ be a positive nondecreasing sequence and 
$\{b_{\ell}\}_{\ell\geq 1}$ and $\{k_{\ell}\}_{\ell\geq 1}$ be nonnegative sequences, 
and $p>1$ be a constant. If
\begin{eqnarray}\label{eq2.13}
&S_{\ell+1}-S_{\ell}\leq b_{\ell}S_{\ell}+k_{\ell}S^p_{\ell} \qquad\mbox{for \ } \ell\geq 1,
\\ \label{eq2.14}
&S^{1-p}_{1}+(1-p)\mathop{\sum}\limits_{s=1}^{\ell-1}k_{s}a^{1-p}_{s+1}>0
\qquad\mbox{for \ } \ell\geq 2,
\end{eqnarray}
then
\begin{equation}\label{eq2.15}
S_{\ell}\leq \frac{1}{a_{\ell}} \Bigg\{S^{1-p}_{1}+(1-p)
\sum_{s=1}^{\ell-1}k_{s}a^{1-p}_{s+1}\Bigg\}^{\frac{1}{1-p}}\qquad\text{for \ }\ell\geq 2,  
\end{equation}
where
\begin{equation}\label{eq2.16}
a_{\ell} := \prod_{s=1}^{\ell-1} \frac{1}{1+b_{s}} \qquad\mbox{for \ } \ell\geq 2.
\end{equation}
\end{lemma}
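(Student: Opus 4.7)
The plan is to reduce the nonlinear difference inequality \eqref{eq2.13} to an essentially linear one by a multiplicative substitution, and then invoke convexity of an appropriate power function so that the inequality telescopes. First I would rewrite \eqref{eq2.13} as $S_{\ell+1}\leq (1+b_\ell)S_\ell + k_\ell S_\ell^p$, multiply through by $a_{\ell+1}$, and exploit the identity $a_{\ell+1}(1+b_\ell)=a_\ell$ (which is valid if one sets $a_1:=1$ by the empty-product convention). The substitution $U_\ell:=a_\ell S_\ell$ then produces the cleaner recursion
\begin{equation*}
U_{\ell+1}-U_\ell \;\leq\; a_{\ell+1}\,k_\ell\,a_\ell^{-p}\,U_\ell^{p},
\end{equation*}
in which the linear $b_\ell$-term has been absorbed into $a_\ell$.

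Next I would use convexity of $\phi(x)=x^{1-p}$ on $(0,\infty)$, which holds since $p>1$ gives $\phi''(x)=p(p-1)x^{-p-1}>0$. Applying $\phi(U_{\ell+1})-\phi(U_\ell)\geq \phi'(U_\ell)(U_{\ell+1}-U_\ell)$ and combining with the previous display (using that $\phi'(U_\ell)=(1-p)U_\ell^{-p}<0$ reverses the inequality) yields
\begin{equation*}
U_{\ell+1}^{1-p}-U_\ell^{1-p} \;\geq\; (1-p)\,k_\ell\,a_{\ell+1}\,a_\ell^{-p}.
\end{equation*}
Writing $a_{\ell+1}a_\ell^{-p}=a_{\ell+1}^{1-p}(1+b_\ell)^{-p}$ and noting $(1+b_\ell)^{-p}\leq 1$ together with $(1-p)k_\ell\leq 0$, I would then replace the right-hand side by the slightly weaker but target quantity $(1-p)k_\ell\,a_{\ell+1}^{1-p}$.

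Finally, I would telescope this last bound from $1$ to $\ell-1$; since $U_1=a_1S_1=S_1$, the sum gives
\begin{equation*}
U_\ell^{1-p} \;\geq\; S_1^{1-p} + (1-p)\sum_{s=1}^{\ell-1} k_s\, a_{s+1}^{1-p}.
\end{equation*}
Hypothesis \eqref{eq2.14} guarantees that the right-hand side is strictly positive, so both sides may be raised to the exponent $1/(1-p)<0$; this flips the inequality, and unpacking $U_\ell=a_\ell S_\ell$ produces exactly \eqref{eq2.15}.

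The main obstacle I anticipate is the bookkeeping of sign reversals caused by $1-p<0$: the convexity step must be oriented so that the negative factor $(1-p)U_\ell^{-p}$ turns the upper bound on $U_{\ell+1}-U_\ell$ into a lower bound on $U_{\ell+1}^{1-p}-U_\ell^{1-p}$, and the positivity hypothesis \eqref{eq2.14} is indispensable to make the final fractional-power exponentiation both well defined and monotone-reversing in the correct direction. The multiplicative change of variable $U_\ell=a_\ell S_\ell$ is the key device: it absorbs the linear part and reduces the recursion to a purely Bernoulli-type problem to which the convexity argument applies cleanly.
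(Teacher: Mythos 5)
Your proof is correct. Note that the paper itself does not prove Lemma \ref{lem2.1} at all --- it defers entirely to the reference [Pachpatte], so there is no in-paper argument to compare against; your proposal supplies a self-contained proof along the standard lines for discrete Bernoulli/Bihari inequalities. Each step checks out: the identity $a_{\ell+1}(1+b_\ell)=a_\ell$ (with the convention $a_1=1$, consistent with \eqref{eq2.16}) correctly absorbs the linear term into $U_\ell=a_\ell S_\ell$; the supporting-line inequality $\phi(U_{\ell+1})\geq\phi(U_\ell)+\phi'(U_\ell)(U_{\ell+1}-U_\ell)$ for the convex function $\phi(x)=x^{1-p}$ is valid for any positive $U_\ell,U_{\ell+1}$, and the sign of $\phi'(U_\ell)=(1-p)U_\ell^{-p}<0$ is handled correctly when combining with the recursion; the discard of the factor $(1+b_\ell)^{-p}\leq 1$ goes the right way because it multiplies a nonpositive quantity; and the final exponentiation by $1/(1-p)<0$ legitimately reverses the telescoped inequality precisely because hypothesis \eqref{eq2.14} makes the right-hand side strictly positive. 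Two small observations: your convexity (tangent-line) argument is slightly more robust than the mean-value-theorem variant often seen in the literature, since it requires no monotonicity of $U_\ell$ (which indeed need not be monotone, as $a_\ell$ decreases while $S_\ell$ increases); consequently your proof never uses the hypothesis that $\{S_\ell\}$ is nondecreasing, which is harmless --- the lemma is simply true under weaker assumptions than stated.
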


Denote $L^2_0(\Ome)$ as the space of functions with zero mean, then for $\Phi\in L^2(\Ome)$, let $u := -\Delta^{-1}\Phi \in H^2(\Ome)\cap L^2_0(\Ome)$ such that
\begin{alignat}{2}%\label{eq1}
-\Delta u &= \Phi&&\qquad \mathrm{in}\ \Omega,\notag\\
\frac{\partial u}{\partial n}&= 0&&\qquad \mathrm{on}\ \partial\Omega.\notag
\end{alignat}

Then we have
\begin{align}\label{eq6}
-(\nabla\Delta^{-1}\Phi,\nabla v) = (\Phi,v)\quad \mathrm{in}\ \Omega\qquad\forall v\in H^1(\Ome)\cap L^2_0(\Ome).
\end{align}

For $v\in L_0^2(\Ome)$ and $\Phi\in L_0^2(\Ome)$, define the continuous $H^{-1}$ inner product by
\begin{align}\label{eq7}
(\Phi, v)_{H^{-1}} := (\nabla\Delta^{-1}\Phi,\nabla\Delta^{-1}v) = (\Phi,-\Delta^{-1}v) = (v,-\Delta^{-1}\Phi).
\end{align}

When $\Phi\in L^2_0(\Ome)$, define the induced continuous $H^{-1}$ norm by
\begin{align}
\|\Phi\|_{H^{-1}} := \sqrt{(\Phi, \Phi)_{H^{-1}}} =\|\nabla\Delta^{-1}\Phi\|_{L^2}.
\end{align}

%Next, define $L^2_0(\Ome)$ as the collection of functions in $L^2(\Ome)$ with mean zero. Then for $\Phi\in L^2_0(\Ome)$, let $u := -\Delta^{-1}\Phi \in H^2(\Ome)\cap L^2_0(\Ome)$ such that
%\begin{align*}%\label{eq1}
%-\Delta u = \Phi\quad \mathrm{in}\ \Omega,\qquad \frac{\partial u}{\partial n} = 0\quad \mathrm{on}\ \partial\Omega.
%\end{align*}
%
%Then we have
%\begin{align}\label{eq6}
%-(\nabla\Delta^{-1}\Phi,\nabla v) = (\Phi,v)\quad \mathrm{in}\ \Omega\qquad\forall v\in H^1(\Omega).
%\end{align}
%
%For $v\in L^2_0(\Ome)$ and $\Phi\in L^2_0(\Ome)$, define the continuous $H^{-1}$ inner product by
%\begin{align}\label{eq7}
%(\Phi, v)_{H^{-1}} := (\nabla\Delta^{-1}\Phi,\nabla\Delta^{-1}v) = (\Phi,-\Delta^{-1}v) = (v,-\Delta^{-1}\Phi).
%\end{align}
%

Next define the Morley element spaces $S_h$ below \cite{brenner1999convergence,brenner2013morley,elliott1989nonconforming}:
\begin{center}
$S^h=\{v_h\in L^{\infty}(\D): v_h\in P_2(K), v_h$ is continuous at the vertices of all triangles, and $\frac{\partial v_h}{\partial n}$ is continuous at the midpoints of interelement edges of triangles\}.
\end{center}

Through the the paper, we assume
\begin{align}\label{eq20180425_5}
\|u_h^n\|_{L^{\infty}}\le C\epsilon^{-\gamma_1},
\end{align}
where $u_h^n$ is defined in \eqref{eq20170504_11}--\eqref{eq20170504_12} and $\gamma_1$ is a constant. Theoretically there is no analysis to prove the discrete maximum principle for the Cahn-Hilliard equation. However, numerically we can verify \eqref{eq20180425_5} for many initial conditions. In Section 4, two examples are given, and we find $\gamma_1=0$ and $C=1$ in these cases.

%Through the the paper, we assume the solution in \eqref{eq20170504_11}--\eqref{eq20170504_12} satisfies
%\begin{align}\label{eq20171001_2}
%\|u_h^n\|_{2,2,h}\le C\epsilon^{-\gamma_1},
%\end{align}
%where $\gamma_1$ is a constant, then by the embedding theorem, we have
%\begin{align}\label{eq20180425_5}
%\|u_h^n\|_{L^{\infty}}\le C\epsilon^{-\gamma_1},
%\end{align}

We use the following notation
\begin{center}
$H^j_E(\Omega)=\{v\in H^j(\Omega): \frac{\partial v}{\partial n}=0$ on $\partial\Omega$\}\qquad j=1, 2, 3.
\end{center}%corrected

Corresponding to $H^j_E(\Omega)$, define $S^h_E$ as the subspace of $S^h$ below: %corrected
\begin{center}
$S^h_E=\{v_h\in S^h: \frac{\partial v_h}{\partial n}=0$ at the midpoints of the edges on $\partial\Omega$\}.
\end{center}%corrected

To the end, the enriching operator $\E$ is restated \cite{brenner1996two,brenner1999convergence,brenner2013morley}. Let $\widetilde{S}_E^h$ be the Hsieh-Clough-Tocher macro element space, which is an enriched space of the Morley finite element space $S_E^h$. Let $p$ and $m$ be the internal vertices and midpoints of triangles $\T_h$. Define $\E: S_E^h\rightarrow \widetilde{S}_E^h$ by
\begin{align*}
(\E v)(p) &= v(p),\\%\label{eq20170812_3}\\
\frac{\p\E v}{\p n}(m) &= \frac{\p v}{\p n}(m),\\%\label{eq20170812_4}\\
(\p^{\beta}(\E v))(p) &= \text{average of } (\p^{\beta}v_i)(p)\qquad |\beta|=1,%\label{eq20170812_5}
\end{align*}
where $v_i=v|_{T_i}$ and triangle $T_i$ contains $p$ as a vertex.

Define the interpolation operator $I_h: H^2_E(\Omega)\rightarrow S_E^h$ such that%corrected
\begin{align*}
(I_h v)(p)&=v(p),\\
\frac{\p I_h v}{\p n}(m)&=\frac{1}{|e|}\int_e\frac{\p v}{\p n}dS,
\end{align*}
where $p$ ranges over the internal vertices of all the triangles $T$, and $m$ ranges over the midpoints of all the edges $e$.

It can be proved that \cite{brenner1996two,brenner1999convergence,brenner2013morley,elliott1989nonconforming}
\begin{alignat}{2}\label{eq20170812_6}
%\|\E v-v\|_{H^1}^2&\le h^4\sum_{T\in\T_h}|v|_{H^3(T)}^2\qquad\forall v\in S^h.%\label{eq20170912_1}
|v-I_hv|_{j,p,K}&\le Ch^{3-j}|v|_{3,p,K}\qquad&&\forall K\in\mathcal{T}_h,\quad\forall v\in H^3(K),\quad j=0,1,2,\\
%\|\E I_h \zeta-I_h \zeta\|_{j,2,h}&\le Ch^{3-j}\|\zeta\|_{H^3}\qquad&&\forall \zeta\in H^3(\Omega)\cap H^2_E(\Omega),\quad j=0,1,2,\label{eq20170930_2}\\%corrected
\|\E v-v\|_{j,2,h}&\le Ch^{2-j}|v|_{2,2,h}\quad&&\forall v\in S_E^h,\quad j=0,1,2.\label{eq20171006_1}
\end{alignat}

\section{Fully Discrete Approximation}
In this section, the Morley element is used to discretize the fourth order Cahn-Hilliard problem \eqref{eq20170504_1}--\eqref{eq20170504_5}. Different kinds of discrete inverse Laplace operators are defined in order to derive the discrete energy law and error estimates. The optimal $\|\cdot\|_{2,2,h}$ error orders are obtained under a weaker regularity assumption, i.e., $v\in H^{3,h}(\Omega)$. This can be considered as a generalization of the regularity assumption in paper \cite{elliott1989nonconforming}. Besides, it is proved that the error bounds depend on $\epsilon^{-1}$ in lower order polynomial, instead of in exponential order. The crux part to prove the error bounds is to prove the generalized coercivity result in the Morley element space, where the enriched finite element space is used as a bridge.

\subsection{Formulation}\label{subsec3_1}
%To derive the weak solution, define a space
%\begin{align*}%\label{eq20170504_6}
%H_0^2(\D)=\bigl\{v\in H^2(\D): v=\frac{\partial v}{\partial n}=0\quad\text{on}\ \partial\D\bigr\}.
%\end{align*}
The weak form of \eqref{eq20170504_1}--\eqref{eq20170504_3} is to seek $u(\cdot,t)\in H^2_E(\D)$ such that
\begin{align}\label{eq20170504_7}
(u_t,v)+\epsilon a(u,v)&=\frac{1}{\epsilon}(\nabla f(u),\nabla v)\quad\forall v\in H_E^2(\D),\\
u(\cdot,0)&=u_0\in H_E^2(\D),
\end{align}
where the bilinear form $a(\cdot,\cdot)$ is defined as
\begin{align}\label{eq20170504_8}
a(u,v)=\int_{\D}\Delta u\Delta v+\bigl(\frac{\partial^2u}{\partial x\partial y}\frac{\partial^2v}{\partial x\partial y}-\frac12\frac{\partial^2u}{\partial x^2}\frac{\partial^2v}{\partial y^2}-\frac12\frac{\partial^2u}{\partial y^2}\frac{\partial^2v}{\partial x^2}\bigr)dxdy
\end{align}
with Poisson's ratio set to $\frac12$.

It can be verified that \cite{lascaux1975some} for any $w\in H^2(\Omega)$,
\begin{align*}%\label{eq20170731_1}
a(w,w)=\frac12(\|\Delta w\|_{0,2,\Omega}^2+|w|_{2,2,\Omega}^2),
\end{align*}

and when $w, z$ are sufficiently smooth,
\begin{align*}%\label{eq20170731_1add}
a(w,z)&=\int_{\Omega}\Delta^2w\,z\,dxdy-\int_{\p\Omega}\frac{\p\Delta w}{\p n}\,zdS\\
&\qquad+\int_{\p\Omega}\bigg(\Delta w-\frac12\frac{\p^2w}{\p s^2}\bigg)\frac{\p z}{\p n}dS+\frac12\int_{\p\Omega}\frac{\p^2w}{\p n\p s}\frac{\p z}{\p s}dS,
\end{align*}
where $n, s$ denote the normal and tangential directions respectively.

Define the following spaces
\begin{alignat*}{2}%\label{eq20170731_3}
H^{3,h}(\Omega)&=S^h\oplus  H^3(\Omega),\qquad H_E^{3,h}(\Omega)&&=S_E^h\oplus  H_E^3(\Omega),\\
H^{2,h}(\Omega)&=S^h\oplus  H^2(\Omega),\qquad H_E^{2,h}(\Omega)&&=S_E^h\oplus  H_E^2(\Omega),\\
H^{1,h}(\Omega)&=S^h\oplus  H^1(\Omega),\qquad H_E^{1,h}(\Omega)&&=S_E^h\oplus  H_E^1(\Omega),
\end{alignat*}
where, for instance,
\begin{align*}
S_E^h\oplus  H_E^2(\Omega)=\{u+v: u\in S_E^h\ \ \text{and}\ \ v\in H_E^2(\Omega)\}.
\end{align*}

Next define the discrete bilinear form
\begin{align}\label{eq20170504_9}
a_h(u,v)&=\sum_{K\in\mathcal{T}_h}\int_K\Delta u\Delta v+\bigl(\frac{\partial^2u}{\partial x\partial y}\frac{\partial^2v}{\partial x\partial y}-\frac12\frac{\partial^2u}{\partial x^2}\frac{\partial^2v}{\partial y^2}-\frac12\frac{\partial^2u}{\partial y^2}\frac{\partial^2v}{\partial x^2}\bigr)dxdy.
%(u,v)_h&=\sum_{K\in\mathcal{T}_h}\int_Kuvdxdy.\label{eq20170504_10}
\end{align}

To introduce the elliptic projection $P_h$ \cite{elliott1989nonconforming}, we first define
\begin{align*}%\label{eq20170504_13}
R=\bigl\{v\in H_E^2(\D): \Delta v\in H_E^2(\D)\bigr\}.
\end{align*}

Then for arbitrary $v\in R$, define the following elliptic projection $P_h$ by seeking $P_hv\in S_E^h$ such that
\begin{align}\label{eq20170504_14}
\tilde b_h(P_hv,w)=(\epsilon\Delta^2v-\frac{1}{\epsilon}\div(f'(u)\nabla v)+\alpha v,w)\qquad\forall w\in S_E^h,
\end{align}
where
\begin{align}\label{eq20170504_15}
\tilde b_h(v,w)=\epsilon a_h(v,w)+\frac{1}{\epsilon}(f'(u)\nabla v,\nabla w)_h+\alpha(v,w).
\end{align}
Notice here $\alpha>\frac{C}{\epsilon^3}$ should be chosen to guarantee the coercivity of $\tilde b_h(v,w)$ because by the proof of Lemma 2.4 in \cite{elliott1989nonconforming}, when $z\in H^{2,h}(\Omega)$, we have
\begin{align*}%\label{eq20180621_1}
\tilde b_h(z,z)=&\frac{\epsilon}{2}(\|\Delta z\|_{0,2,h}^2+|z|_{2,2,h}^2)+\frac{1}{\epsilon}(f'(u)\nabla z,\nabla z)_h+\alpha(z,z)\\
\ge&\frac{\epsilon}{2}(\|\Delta z\|_{0,2,h}^2+|z|_{2,2,h}^2)-\frac{1}{\epsilon}(\nabla z,\nabla z)_h+\alpha(z,z)\\
\ge&\frac{\epsilon}{2}(\|\Delta z\|_{0,2,h}^2+|z|_{2,2,h}^2)-\frac{1}{\epsilon}(\nabla z,\nabla z)_h+[C(\alpha\epsilon)^{\frac12}(\nabla z,\nabla z)_h-\frac{\epsilon}{4}|z|_{2,2,h}^2].
\end{align*}

Based on the above bilinear form, our fully discrete Galerkin method is to find $u_h^n\in S^h_E$ such that 
\begin{align}\label{eq20170504_11}
(d_tu_h^{n},v_h)+\epsilon a_h(u_h^{n},v_h)+\frac{1}{\epsilon}(\nabla f(u_h^{n}),\nabla v_h)_h&=0\quad\forall v_h\in S^h_E,\\
u_h^0&=u_0^h\in S^h_E,\label{eq20170504_12}
\end{align}
where the difference operator $d_tu_h^{n} := \frac{u_h^{n}-u_h^{n-1}}{k}$, and $u_0^h := P_hu(t_0)$ .

\subsection{The $\|\cdot\|_{2,2,h}$ and $\|\cdot\|_{1,2,h}$ errors under weaker regularity assumptions}\label{subsec3_3}
In section 5 of paper \cite{elliott1989nonconforming}, the projection errors in $\|\cdot\|_{2,2,h}$ and $\|\cdot\|_{1,2,h}$ norms are proved under the assumption that the exact solution $u\in H^4(\Omega)$. In this paper, $\hat\Delta_h^{-1}\zeta$ is defined in \eqref{eq20170725_1}, which can be considered as a novel projection of $\Delta^{-1}\zeta$ where $\zeta\in S_E^h$, and we also give the error bounds between $\Delta^{-1}\zeta$ and $\hat\Delta_h^{-1}\zeta$ under the assumption that $\Delta^{-1}\zeta\in H^2(\Omega)\cap H^{3,h}(\Omega)$. In this case, notice here $\Delta^{-1}\zeta$ does not need to be related to the exact solution, even we define the bilinear form to be equal to the right-hand side (see Remark \ref{rem20171001_1} below for details). 

First we cite Lemma 2.5 in \cite{elliott1989nonconforming}, which will be used in this paper.
\begin{lemma}\label{lem20170723_1}
Let $z\in H_E^{2,h}(\Omega)$ and $w\in H^2_E(\Omega)\cap W^{3,p}(\Omega)$, and define $B_h(w,z)$ by
\begin{align*}
B_h(w,z) = \sum_{K\in\mathcal{T}_h}\int_{\partial K}\bigg(\Delta w\frac{\partial z}{\partial n}+\frac12\frac{\partial^2 w}{\partial n\partial s}\frac{\partial z}{\partial s}-\frac12\frac{\partial^2 w}{\partial s^2}\frac{\partial z}{\partial n}\bigg)dS,
\end{align*}
then we have
\begin{align*}
|B_h(w,z)|\le Ch|w|_{3,2,h}|z|_{2,2,h}.
\end{align*}
\end{lemma}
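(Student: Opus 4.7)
The plan is a classical Morley nonconforming consistency estimate, organized edge by edge. First I would rewrite
$B_h(w,z)=\sum_{K\in\mathcal{T}_h}\int_{\partial K}(\cdots)\,dS$
as a sum over interior edges $e\in\mathcal{E}_h$ by regrouping the contributions from the two triangles sharing $e$. Since $w\in W^{3,p}(\Omega)\cap H^2_E(\Omega)$, the quantities $\Delta w$, $\partial^2 w/\partial n\partial s$ and $\partial^2 w/\partial s^2$ have single-valued traces across $e$; hence the regrouping produces integrals of the form $\int_e (\Delta w)\,[\partial z/\partial n]_e\,ds$, $\int_e (\partial^2 w/\partial n\partial s)\,[\partial z/\partial s]_e\,ds$ and $\int_e (\partial^2 w/\partial s^2)\,[\partial z/\partial n]_e\,ds$, where $[\cdot]_e$ denotes the jump. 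Boundary edges contribute analogously using the built-in condition $\partial z/\partial n=0$ at edge-midpoints of $\partial\Omega$.

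Next I would exploit the two zero-mean identities specific to the Morley space. Along any edge $e$, the restriction $\partial z/\partial n|_e$ is a linear polynomial that is continuous at the midpoint of $e$, so $[\partial z/\partial n]_e$ is linear and vanishes at the midpoint; symmetry gives $\int_e [\partial z/\partial n]_e\,ds=0$. Similarly, $z|_e$ is a quadratic continuous at the two endpoints of $e$, so $z_{K_1}-z_{K_2}\in P_2(e)$ vanishes at both endpoints, hence $[\partial z/\partial s]_e$ is linear with zero mean, $\int_e [\partial z/\partial s]_e\,ds=0$. These two identities let me subtract the edge-averages $\overline{(\Delta w)}_e$, $\overline{(\partial^2 w/\partial n\partial s)}_e$, $\overline{(\partial^2 w/\partial s^2)}_e$ from the smooth factors without changing the edge integrals.

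Now I would apply the scaled trace/Bramble--Hilbert estimate
$\|\phi-\overline{\phi}_e\|_{L^2(e)}\le C h_K^{1/2}|\phi|_{H^1(K)}$
to each of the three subtracted $w$-factors, giving a bound $Ch_K^{1/2}|w|_{3,2,K}$. For the jump factors, a standard Morley trace/inverse estimate yields
$\|[\partial z/\partial n]_e\|_{L^2(e)}+\|[\partial z/\partial s]_e\|_{L^2(e)}\le C h_e^{1/2}\bigl(|z|_{2,2,K_1}+|z|_{2,2,K_2}\bigr)$;
for the second jump one uses that $z_{K_1}-z_{K_2}\in P_2(e)$ vanishes at the endpoints, together with an inverse inequality on the neighboring elements. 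Multiplying and using Cauchy--Schwarz over edges, and then over elements, produces
$|B_h(w,z)|\le C h\,|w|_{3,2,h}\,|z|_{2,2,h}$, which is the claim.

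The delicate step I expect to be the main obstacle is the middle term $\int_{\partial K}\tfrac12(\partial^2 w/\partial n\partial s)(\partial z/\partial s)\,dS$, because the pairing is not immediately expressible as ``smooth trace times zero-mean jump of $z$''. It requires either a careful regrouping across each interior edge (noting that the mixed derivative of $w$ is single-valued there) together with the tangential-jump identity above, or an edge-wise integration by parts in $s$ that moves the $s$-derivative off $z$ and produces endpoint terms which cancel because $z$ is continuous at the vertices. Once this accounting is done cleanly, the remainder of the proof is the routine trace/Poincar\'e machinery described above.
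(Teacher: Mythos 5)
The paper never proves this lemma at all: it is quoted verbatim as Lemma 2.5 of Elliott--French \cite{elliott1989nonconforming}, so your argument has to stand on its own. Your interior-edge machinery is indeed the standard (and correct) route: regrouping into jumps, single-valuedness of the second derivatives of $w$ across interior edges, the two zero-mean jump identities for Morley functions, mean-subtraction of the $w$-factors, and the scaled trace/Poincar\'e estimate $\|\phi-\overline{\phi}_e\|_{L^2(e)}\le Ch_K^{1/2}|\phi|_{1,2,K}$ followed by Cauchy--Schwarz over edges. The genuine gap is on the \emph{boundary} edges, and it concerns precisely the term you singled out as delicate. On $e\subset\partial\Omega$ there is no jump to exploit: the middle contribution is simply $\tfrac12\int_e(\partial^2 w/\partial n\partial s)(\partial z/\partial s)\,dS$, and $\partial z/\partial s$ has no zero-mean property there, since $\int_e(\partial z/\partial s)\,dS=z(b)-z(a)\neq0$ in general; your stated mechanism for boundary edges (zero mean of $\partial z/\partial n$) only controls the first and third terms. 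Without further input this term is $O(1)$, not $O(h)$: take $z\in H^2_E(\Omega)$ (so all interior jumps vanish and $\partial z/\partial n=0$ on $\partial\Omega$) with nonzero tangential variation on the boundary, and $w\in W^{3,p}(\Omega)$ with $\partial^2 w/\partial n\partial s\neq0$ there; then $B_h(w,z)$ reduces to a boundary integral independent of $h$. The missing ingredient is the hypothesis $w\in H^2_E(\Omega)$, which you state but never use: $\partial w/\partial n\equiv0$ on $\partial\Omega$ forces its tangential derivative $\partial^2 w/\partial n\partial s$ to vanish on each straight boundary edge (equivalently, at least to have zero edge mean, since $\int_e(\partial^2 w/\partial n\partial s)\,dS=\partial w/\partial n(b)-\partial w/\partial n(a)=0$), after which the same mean-subtraction and trace argument closes the estimate. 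Your alternative suggestion, integrating by parts in $s$ and cancelling endpoint terms by vertex continuity of $z$, does not repair this: at boundary vertices the adjacent edges carry different $(n,s)$ frames, and the integration by parts also demands a trace of $\partial^3 w/\partial n\partial s^2$, which exceeds the stated regularity of $w$.

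A secondary, fixable inaccuracy: you argue throughout as if $z$ were a pure Morley function ($\partial z/\partial n|_e$ linear, $z|_e\in P_2(e)$, inverse inequalities), but the lemma allows $z\in H_E^{2,h}(\Omega)=S_E^h\oplus H_E^2(\Omega)$, which contains non-polynomial functions, and inverse inequalities are not available for the conforming part. The zero-mean jump identities do survive (the conforming part has no jumps), so the repair is routine: either split $z=z_h+z_c$ and observe that all jump terms of $z_c$ vanish, or avoid inverse estimates altogether by writing $[\partial z/\partial n]_e=(\partial z_1/\partial n-m)-(\partial z_2/\partial n-m)$ with the common edge mean $m$ and bounding each piece by $Ch^{1/2}|z|_{2,2,K_i}$ via the same trace-plus-Poincar\'e estimate you already use for the $w$-factors; the analogous decomposition handles $[\partial z/\partial s]_e$. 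With these two repairs, boundary edges included, your outline becomes a complete proof.
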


Next some mesh-dependent discrete inverse
Laplace operators are given here. Define space $W_h$ by
\begin{align*}%\label{eq20170801_1}
W_h=\{w_h\in L^2(\Omega)|w_h\ \text{is a piecewise polynomial with degree $\le 6$ on each triangle K}\}.
\end{align*}

%Denote $\mathring{W}_h:=W_h\cap L^2_0(\Ome)$.
Then we can define the discrete inverse 
Laplace operator $\underline{\Delta}_h^{-1}: L^2(\Omega)\rightarrow {W}_h$ as follows: given 
$\zeta\in L^2(\Omega)$, define $\underline{\Delta}_h^{-1}\zeta\in{W}_h$ such that
\begin{equation}\label{eq3.2}
(\nabla\underline{\Delta}_h^{-1}\zeta,\nabla w_h)_h+(\underline{\Delta}_h^{-1}\zeta, w_h)=(\nabla\Delta^{-1}\zeta,\nabla w_h)_h+(\Delta^{-1}\zeta,w_h) \qquad \forall\, w_h\in {W}_h.
\end{equation}

Therefore, $-\underline{\Delta}_h^{-1}\zeta$ can be considered as a projection of $-\Delta^{-1}\zeta$.

As a comparison, we define the discrete inverse 
Laplace operator $\Delta_h^{-1}: L^2(\Omega)\rightarrow {W}_h$ as follows: given $\zeta\in L^2(\Omega)$, define $\Delta_h^{-1}\zeta\in{W}_h$ such that
\begin{equation}\label{eq20171106_8}
(\nabla\Delta_h^{-1}\zeta,\nabla w_h)_h+(\Delta_h^{-1}\zeta, w_h)=-(\zeta, w_h)+(\Delta^{-1}\zeta,w_h) \qquad \forall\, w_h\in {W}_h.
\end{equation}
%When $\xi, \zeta\in {W}_h$, define  the discrete ``-1,2,h" inner product by
%\begin{equation}\label{eq3.3}
%(\zeta,\xi)_{-1,2,h}:= (\nabla\Delta_h^{-1}\zeta,\nabla\Delta_h^{-1}\xi)%a_h(-\Delta_h^{-1}\zeta,-\Delta_h^{-1}\xi)
%=(\zeta,-\Delta_h^{-1}\xi)=(-\Delta_h^{-1}\zeta,\xi),
%\end{equation}
%and the induced mesh-dependent discrete ``-1,2,h" norm is given by
%\begin{equation}\label{eq3.4}
%\|\zeta\|_{-1,2,h}:=\sqrt{(\zeta,\zeta)_{-1,2,h}}
%=\mathop{\sup}_{0\neq\xi\in\mathring{W}_h}\limits\frac{(\zeta,\xi)}{|||\xi|||_a},
%\end{equation}
%where $|||\xi|||_a:=\sqrt{(\nabla\xi,\nabla\xi)_h}$.

Furthermore, define $\widetilde\Delta_h^{-1}, \hat\Delta_h^{-1}: S_E^h\rightarrow S_E^h$ as follows: given 
$\zeta\in S_E^h$, let $\widetilde\Delta_h^{-1}\zeta, \hat\Delta_h^{-1}\zeta\in S_E^h$ such that
\begin{alignat}{2}\label{eq3.2b}
b_h(-\widetilde\Delta_h^{-1}\zeta,w_h)&=(\nabla\zeta,\nabla w_h)_h+\beta(-\Delta^{-1}\zeta,w_h)\qquad \forall\, w_h\in S_E^h,\\
%b_h(-\hat\Delta_h^{-1}\zeta,w_h)&=(\nabla\zeta,\nabla w_h)_h-\sum_{\p K\in\mathcal{E}_h}\int_{\p K}\zeta\frac{\p w_h}{\p n}dS+B_h(-\Delta^{-1}\zeta,w_h)\label{eq20170725_1}\\
b_h(-\hat\Delta_h^{-1}\zeta,w_h)&=(\nabla\zeta,\nabla w_h)_h+B_h(-\Delta^{-1}\zeta,w_h)\label{eq20170725_1}\\
&\qquad+\beta(-\Delta^{-1}\zeta,w_h) \qquad \forall\, w_h\in S_E^h,\notag
\end{alignat}
where $b_h(u,v):=a_h(u,v)+\beta(u,v)$, and $\beta$ is a positive number to guarantee the coercivity of $b_h(u,v)$, i.e., $\beta=1$ by the proof of Lemma 2.4 in \cite{elliott1989nonconforming}.%, and by \eqref{eq20170504_9}, $a_h(u,v)$ can be equivalently written as
%\begin{align*}
%a_h(u,v)=-(\nabla\Delta u,\nabla v)_h+B_h(u,v).
%\end{align*}

%\begin{remark}\label{rem20170806_1}
%Note the equation \eqref{eq20170710_2} and \eqref{eq20170723_1} hold even when $\eta\in H^{2,h}_E(\Omega)$, and in \eqref{eq20170723_1}, we only need the regularity assumption $\Delta^{-1}\zeta\in H^{3,h}(\Omega)$. 
%\end{remark}

For any $v\in H^{3}(\Omega)$, it always holds that
\begin{alignat}{2}
b_h(v,\eta)&=-(\nabla\Delta v,\nabla \eta)_h+B_h(v,\eta)+\beta(v,\eta)\label{eq20170710_2}\\
&:=F_h(\eta)\qquad\forall \eta\in H^{2,h}_E(\Omega).\notag
\end{alignat}

Corresponding to operator $\hat\Delta_h^{-1}$, for any $v\in H^{3}(\Omega)$, define $v_h\in S_E^h$ by
\begin{alignat}{2}
b_h(v_h,\xi)&=-(\nabla\Delta v,\nabla \xi)_h+B_h(v,\xi)+\beta(v,\xi)\label{eq20170710_3}\\
&:={F}_h(\xi)\qquad\forall \xi\in S_E^h.\notag%+\beta (v,\xi)
\end{alignat}

Corresponding to operator $\widetilde\Delta_h^{-1}$, for any $v\in H^{3}(\Omega)$, define $v_h\in S_E^h$ by
\begin{alignat}{2}
b_h(v_h,\xi)&=-(\nabla\Delta v,\nabla \xi)_h+\beta(v,\xi)\label{eq20180604_9}\\
&:=\hat{F}_h(\xi)\qquad\forall \xi\in S_E^h.\notag%+\beta (v,\xi)
\end{alignat}

By equations \eqref{eq20170710_2} and \eqref{eq20170725_1}, we know 
\begin{alignat}{2}\label{eq20170723_1}
b_h(-\Delta^{-1}\zeta,\eta)&=(\nabla \zeta,\nabla \eta)_h-B_h(\Delta^{-1}\zeta,\eta)\\
&-\beta(\Delta^{-1}\zeta,\eta)\qquad\forall \eta\in H^{2,h}_E(\Omega),\notag\\%+\beta (-\Delta^{-1}u,\eta)
b_h(-\hat\Delta_h^{-1}\zeta,\xi)&=(\nabla \zeta,\nabla \xi)_h-B_h(\Delta^{-1}\zeta,\xi)\label{eq20171001_6}\\
&-\beta(\Delta^{-1}\zeta,\eta)\qquad\forall \xi\in S_E^h.\notag%+\beta (-\Delta^{-1}u,\xi)
\end{alignat}

Then it is ready to prove the optimal error estimates of $\|\hat\Delta_h^{-1}u-\Delta^{-1}u\|_{1,2,h}$ and $\|\hat\Delta_h^{-1}u-\Delta^{-1}u\|_{2,2,h}$ when $u\in S_E^h$. Notice $u\in L^2(\Omega)$, but $u$ may not be in $ H^1(\Omega)$. Instead of using properties of the Morley elements (Lemmas 2.1--2.6 in \cite{elliott1989nonconforming}), the enriching operator is perfectly employed to derive the upper bounds.

\begin{lemma}\label{lem20170604_1_add}
%Assume $\Delta^{-1}\zeta$ is defined in \eqref{eq1ADD}, 
Assume $\hat\Delta_h^{-1}$ is defined in \eqref{eq20170725_1} and $u\in S_E^h$, then
\begin{align*}
\|\hat\Delta_h^{-1}u-\Delta^{-1}u\|_{2,2,h}\le Ch\|u\|_{1,2,h}.
\end{align*}
\end{lemma}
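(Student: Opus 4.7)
The plan is a Galerkin-orthogonality plus C\'ea-type best-approximation argument, with the enriching operator providing the bridge to a conforming intermediate function. Subtracting the identity \eqref{eq20170723_1} (restricted to $\eta=\xi\in S_E^h\subset H^{2,h}_E(\Omega)$) from \eqref{eq20171001_6} immediately yields the orthogonality
\[
b_h\bigl(\Delta^{-1}u-\hat\Delta_h^{-1}u,\,\xi\bigr)=0 \qquad \forall\,\xi\in S_E^h.
\]
Since $b_h=a_h+\beta(\cdot,\cdot)$ is coercive and bounded in $\|\cdot\|_{2,2,h}$ on $H^{2,h}_E(\Omega)$ (with $\beta=1$, by the proof of Lemma 2.4 in \cite{elliott1989nonconforming}), inserting any $v_h\in S_E^h$ into $b_h(\hat\Delta_h^{-1}u-\Delta^{-1}u,\hat\Delta_h^{-1}u-\Delta^{-1}u)$ and exploiting the orthogonality gives the quasi-best-approximation bound
\[
\|\hat\Delta_h^{-1}u-\Delta^{-1}u\|_{2,2,h}\;\le\;C\,\inf_{v_h\in S_E^h}\|\Delta^{-1}u-v_h\|_{2,2,h}.
\]

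The nontrivial step is choosing a good $v_h$. The difficulty is that $u\in S_E^h$ is only piecewise polynomial and fails to lie in $H^1(\Omega)$ globally, so $\Delta^{-1}u$ has merely $H^2$ regularity and the naive choice $v_h=I_h(\Delta^{-1}u)$ does not by itself yield an $O(h)$ rate. The plan is to route through the enriching operator $\E:S_E^h\to\widetilde S_E^h\subset H^2_E(\Omega)$, lifting $u$ to a globally $C^1$-conforming function, taking
\[
v_h:=I_h\bigl(\Delta^{-1}(\E u)\bigr),
\]
and splitting
\[
\Delta^{-1}u-v_h \;=\; \Delta^{-1}(u-\E u) \;+\; \bigl(\Delta^{-1}(\E u)-I_h\Delta^{-1}(\E u)\bigr).
\]

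For the first piece I would invoke $H^2$-elliptic regularity of the Neumann Laplacian, the enriching estimate \eqref{eq20171006_1} at $j=0$, and the Morley inverse inequality $|u|_{2,2,h}\le Ch^{-1}\|u\|_{1,2,h}$, obtaining
\[
\|\Delta^{-1}(u-\E u)\|_{2,2,h}\le C\|u-\E u\|_{L^2}\le Ch^2|u|_{2,2,h}\le Ch\|u\|_{1,2,h}.
\]
For the second piece, since $\E u\in H^2_E(\Omega)\subset H^1(\Omega)$, elliptic regularity gives $\Delta^{-1}(\E u)\in H^3(\Omega)$, so the interpolation estimate \eqref{eq20170812_6} applies; combined with $\|\E u\|_{H^1}\le \|u\|_{1,2,h}+\|\E u-u\|_{1,2,h}\le C\|u\|_{1,2,h}$ (using \eqref{eq20171006_1} at $j=1$ together with the inverse inequality), this yields
\[
\|\Delta^{-1}(\E u)-I_h\Delta^{-1}(\E u)\|_{2,2,h}\le Ch\,|\Delta^{-1}(\E u)|_{3,2,h}\le Ch\,\|\E u\|_{H^1}\le Ch\,\|u\|_{1,2,h}.
\]
Adding the two pieces and feeding the result back into the quasi-optimal bound gives the claim.

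The main obstacle is the conceptual one of constructing a usable $v_h$: one cannot simply set $v_h=I_h(\Delta^{-1}u)$ because $u$ is not $H^1$ across interelement edges, so $\Delta^{-1}u$ is only known to be $H^2$ globally and no piecewise $H^3$ bound is available in terms of $\|u\|_{1,2,h}$. The enriching operator $\E$ supplies exactly the $C^1$-conforming bridge needed, and the final arithmetic works out cleanly because the factor $h^2$ earned from \eqref{eq20171006_1} at $j=0$ is precisely canceled by the single power $h^{-1}$ spent in the Morley inverse inequality, producing the target $Ch\|u\|_{1,2,h}$ rather than the weaker $Ch^2|u|_{2,2,h}$.
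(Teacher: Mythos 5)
Your overall strategy coincides with the paper's: a coercivity-plus-Galerkin-orthogonality (C\'ea) argument for $b_h$, with the enriching operator $\E$ providing the $H^3(\Omega)$ function to which elliptic regularity, the interpolation estimate \eqref{eq20170812_6}, the enriching estimate \eqref{eq20171006_1} and the inverse inequality are applied; indeed your two final bounds are exactly the paper's \eqref{eq20180604_1} and \eqref{eq20180604_2}. The gap is in your first step. You claim the exact orthogonality $b_h(\Delta^{-1}u-\hat\Delta_h^{-1}u,\xi)=0$ for all $\xi\in S_E^h$ by subtracting \eqref{eq20170723_1} from \eqref{eq20171001_6} at $\zeta=u$. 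But \eqref{eq20170723_1} is obtained by substituting $v=-\Delta^{-1}\zeta$ into the integration-by-parts identity \eqref{eq20170710_2}, which is valid only for $v\in H^{3}(\Omega)$. For $\zeta=u\in S_E^h$ one has $u\in L^2(\Omega)$ but in general $u\notin H^1(\Omega)$, so $\Delta^{-1}u$ lies in $H^2(\Omega)$ but not in $H^3(\Omega)$; the elementwise integration by parts that produces \eqref{eq20170723_1} cannot be performed at this regularity, and the boundary functional $B_h(\Delta^{-1}u,\cdot)$ involves traces of second derivatives that are not controlled. This is exactly the obstruction the paper emphasizes in Remark \ref{rem20180604} and in the remark following Lemma \ref{lem20170914_1}: for Morley functions one may not treat $\Delta^{-1}u$ as a (piecewise) $H^3$ function, and every use of \eqref{eq20170710_2} must be funneled through $\E$.

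The repair is precisely the paper's proof, and you already possess every estimate it uses: the enriching operator must enter \emph{before} the orthogonality, not merely in the choice of the comparison function inside the infimum. Take $v=\Delta^{-1}\E u\in H^{3}(\Omega)$ (legitimate, since $\E u\in H^2_E(\Omega)\subset H^1(\Omega)$), so that \eqref{eq20170710_2} genuinely applies to $v$, pair it with $v_h=\hat\Delta_h^{-1}u$ through \eqref{eq20170710_3}, and run the chain \eqref{eq20170710_8_add}; this yields $\|\Delta^{-1}\E u-\hat\Delta_h^{-1}u\|_{2,2,h}\le C\|\Delta^{-1}\E u-I_h\Delta^{-1}\E u\|_{2,2,h}\le Ch\|u\|_{1,2,h}$, which is word-for-word your ``second piece''. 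Then conclude by the triangle inequality with your ``first piece'' $\|\Delta^{-1}\E u-\Delta^{-1}u\|_{H^2}\le\|u-\E u\|_{L^2}\le Ch\|u\|_{1,2,h}$. In short, your splitting and your arithmetic are sound, but the exact orthogonality for the pair $(\Delta^{-1}u,\hat\Delta_h^{-1}u)$ on which your quasi-best-approximation bound rests is unjustified; the orthogonality and the C\'ea argument must be formulated for the pair $(\Delta^{-1}\E u,\hat\Delta_h^{-1}u)$ instead, after which your remaining estimates go through unchanged.
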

%Since $H_E^2(\Omega)$ is dense in $H^1(\Omega)$, then we have $\forall v\in H^2_E(\Omega)$,
%\begin{align}\label{eq20170710_4_add}
%-(\nabla\Delta v,\nabla \eta)_h+B_h(v,\eta)=F_h(\eta)\qquad\forall \eta\in H^1(\Omega).
%\end{align}
%
%%Note $B_h(v,\eta)=0$ here, then
%%\begin{align}\label{eq20170723_4_add}
%%-(\nabla\Delta v,\nabla \eta)_h+\beta (v,\eta)=F_h(\eta)\qquad\forall v\in H^1(\Omega).
%%\end{align}
%
%Let $\psi\in H_E^{2,h}(\Omega)$ and $\psi^L\in H^1(\Omega)$ be the piecewise linear interpolant of $\psi$, then 
%\begin{align}\label{eq20170710_6_add}
%a_h(v,\psi)-{F}_h(\psi)=&(-\nabla\Delta v,\nabla(\psi-\psi^L))_h+B_h(v,\psi-\psi^L)-F_h(\psi-\psi^L).
%\end{align}
%
%%In the following, $\psi$ will be set as $I_hv-v_h$. When $\psi$ is a piecewise polynomial, by the definition of $F_h$ and the inverse inequality, we have
%%\begin{align}\label{eq20170724_1_add}
%%F_h(\psi-\psi^L)&\le Ch\|v\|_{3,2,h}|\psi|_{2,2,h}+Ch|v|_{3,2,h}|\psi-\psi^L|_{2,2,h}\\
%%&\le Ch\|v\|_{3,2,h}|\psi|_{2,2,h}+C|v|_{3,2,h}|\psi-\psi^L|_{1,2,h}\notag\\
%%&\le Ch\|v\|_{3,2,h}|\psi|_{2,2,h}.\notag
%%\end{align}
%
%By lemma \ref{lem20170723_1}, we have
%\begin{align}\label{eq20170710_7_add}
%|a_h(v,\psi)-F_h(\psi)|\le& Ch\|v\|_{3,2,h}|\psi|_{2,2,h}+Ch|v|_{3,2,h}|\psi|_{2,2,h}-F_h(\psi-\psi^L).
%%\le& C(h\|v\|_{3,2,h}|\psi|_{2,2,h}+\|F_h-\tilde{F}_h\|_{-1,2,h}|\psi|_{1,2,h})\notag
%\end{align}

%Denote $\tilde{\psi}=I_hv-v_h$. 
\begin{proof}
Using \eqref{eq20170710_2} and \eqref{eq20170710_3}, we obtain %corrected
\begin{align}\label{eq20170710_8_add}
&\quad b_h(v-v_h,v-v_h)\\
&=b_h(v-v_h,v-I_hv)+b_h(v,I_hv-v_h)-F_h(I_hv-v_h)\notag\\
&= b_h(v-v_h,v-I_hv)\notag\\
&\le \|v-v_h\|_{2,2,h}\|v-I_hv\|_{2,2,h}.\notag
\end{align}

Let $v=\Delta^{-1}\E u$ and $v_h=\hat\Delta_h^{-1}u$, by \eqref{eq20170812_6} and the elliptic regularity theory, we have
\begin{align}\label{eq20180604_1}
\|\Delta^{-1}\E u-\hat\Delta_h^{-1}u\|_{2,2,h}&\le\|\Delta^{-1}\E u-I_h\Delta^{-1}\E u\|_{2,2,h}\\
&\le Ch|\Delta^{-1}\E u|_{H^3}\notag\\
&\le Ch\|\E u\|_{H^1}\notag\\
&\le Ch\|u\|_{1,2,h}\notag,
\end{align}
where the last inequality uses \eqref{eq20171006_1}, the inverse inequality and the triangle inequality.

On the other hand, using the elliptic regularity theory and \eqref{eq20171006_1}, we have
\begin{align}\label{eq20180604_2}
\|\Delta^{-1}\E u-\Delta^{-1}u\|_{H^2}&\le\|u- \E u\|_{L^2}\\
&\le Ch|u|_{1,2,h}\notag.
\end{align}

Combining \eqref{eq20180604_1} and \eqref{eq20180604_2}, and using the triangle inequality, the theorem can be obtained immediately.
\end{proof}

The following lemma is a direct result of Lemma \ref{lem20170604_1_add}.
\begin{lemma}\label{lem20170710_1_add}
Assume $\hat\Delta_h^{-1}$ is defined in \eqref{eq20170725_1} and $u\in S_E^h$, then
\begin{align*}
\|\hat\Delta_h^{-1}u-\Delta^{-1}u\|_{1,2,h}\le Ch\|u\|_{1,2,h}.
\end{align*}
\end{lemma}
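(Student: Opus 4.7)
The plan is extremely short, since the statement is advertised as a direct corollary of Lemma \ref{lem20170604_1_add}. First I would note that, by the definition of the mesh-dependent Sobolev norms given in the preliminaries, for any $w \in H^{2,h}_E(\Omega)$ we have
\begin{align*}
\|w\|_{1,2,h}^2 \;=\; |w|_{0,2,h}^2 + |w|_{1,2,h}^2 \;\le\; |w|_{0,2,h}^2 + |w|_{1,2,h}^2 + |w|_{2,2,h}^2 \;=\; \|w\|_{2,2,h}^2,
\end{align*}
so $\|\cdot\|_{1,2,h}$ is dominated elementwise by $\|\cdot\|_{2,2,h}$.

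Next I would apply this to the broken-error function $w := \hat\Delta_h^{-1}u - \Delta^{-1}u$, which is admissible in $H^{2,h}_E(\Omega)$ since $\hat\Delta_h^{-1}u \in S_E^h \subset H^{2,h}_E(\Omega)$ and $-\Delta^{-1}u \in H^2_E(\Omega)$ by elliptic regularity (recall $u \in S_E^h \subset L^2(\Omega)$). Invoking Lemma \ref{lem20170604_1_add} gives
\begin{align*}
\|\hat\Delta_h^{-1}u-\Delta^{-1}u\|_{1,2,h} \;\le\; \|\hat\Delta_h^{-1}u-\Delta^{-1}u\|_{2,2,h} \;\le\; Ch\|u\|_{1,2,h},
\end{align*}
which is the claim. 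There is no essential obstacle here: the only thing to check is that the mesh-dependent $H^1$-type norm is bounded by the mesh-dependent $H^2$-type norm, which is immediate from the definition.

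I would remark, for context, that one would normally expect a sharper estimate of the form $\|\hat\Delta_h^{-1}u-\Delta^{-1}u\|_{1,2,h} \le Ch^2\|u\|_{1,2,h}$ through an Aubin--Nitsche style duality argument using the consistency identities \eqref{eq20170723_1}--\eqref{eq20171001_6} together with the bound from Lemma \ref{lem20170604_1_add}. However, since the lemma only asserts the order-$h$ bound, the elementary norm comparison above is sufficient and is presumably what the author intends by calling it a ``direct result.''
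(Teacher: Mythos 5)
Your proof is correct and matches the paper's intent: the paper itself gives no separate argument, stating only that the lemma is ``a direct result of Lemma \ref{lem20170604_1_add},'' which is precisely the elementwise norm domination $\|\cdot\|_{1,2,h}\le\|\cdot\|_{2,2,h}$ you spelled out. The additional remark about a possible $O(h^2)$ duality improvement is fine but not needed, exactly as you note.
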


\begin{remark}\label{rem20180604}
\begin{enumerate}
\item In \eqref{eq20170710_8_add}, if the enriching operator is not introduced, i.e., let $v=\Delta^{-1}u$ and $v_h=\hat\Delta_h^{-1}u$, we can only obtain
\begin{align*}
\|\Delta^{-1}u-\hat\Delta_h^{-1}u\|_{2,2,h}\le Ch|\Delta^{-1}u|_{3,2,h}.%\le Ch\|v\|_{3,2,h}.
\end{align*}
In the following part of this paper, $u$ in in the Morley element space $S_E^h$, which is not in $H^1(\Omega)$, so the inequality below may be very hard to obtain
\begin{align*}
|\Delta^{-1}u|_{3,2,h}\le Ch\|u\|_{1,2,h}.
\end{align*}
%\item If the enriching theory is used to prove Lemma \ref{lem20180624_1}, we need the optimal bound of error $\|u- \E u\|_{H^{-1}}$.
\end{enumerate}
\end{remark}

Next we prove the error between $\Delta^{-1}\zeta$ and $\widetilde\Delta_h^{-1}\zeta$. 

\begin{lemma}\label{lemma_3.7_add}
Assume $\widetilde\Delta_h^{-1}$ is defined in \eqref{eq3.2b} and $\zeta\in S_E^h$, then
\begin{align*}
\|\Delta^{-1}\zeta-\widetilde\Delta_h^{-1}\zeta\|_{2,2,h}&\le Ch\|\zeta\|_{1,2,h}.
\end{align*}
\end{lemma}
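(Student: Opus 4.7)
The plan is to parallel the proof of Lemma \ref{lem20170604_1_add}, using the enriching operator $\E$ to bridge between the Morley function $\zeta$ (which need not lie in $H^1_E(\Omega)$) and its conforming replacement $\E\zeta\in\widetilde S_E^h\subset H^2_E(\Omega)$. Since the definition \eqref{eq20180604_9} lacks the $B_h$ correction present in \eqref{eq20170710_3}, the Galerkin orthogonality is replaced by a Strang-type argument. First, introduce the smoothed intermediate $v:=-\Delta^{-1}\E\zeta$; by Neumann elliptic regularity $v\in H^4_E(\Omega)\subset H^3(\Omega)$. Let $\tilde v_h\in S_E^h$ be the corresponding auxiliary discrete object defined by \eqref{eq20180604_9} applied to this $v$.

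Combining \eqref{eq20170710_2} and \eqref{eq20180604_9} yields the Strang defect $b_h(v-\tilde v_h,\xi)=F_h(\xi)-\hat F_h(\xi)=B_h(v,\xi)$ for every $\xi\in S_E^h$. Splitting
\[
b_h(v-\tilde v_h,v-\tilde v_h)=b_h(v-\tilde v_h,v-I_hv)+B_h(v,I_hv-\tilde v_h),
\]
applying Lemma \ref{lem20170723_1} to the second summand, using the interpolation estimate \eqref{eq20170812_6} and Young's inequality to absorb a copy of $\|v-\tilde v_h\|_{2,2,h}^2$, and bounding $|v|_{H^3}\le C\|\E\zeta\|_{H^1}\le C\|\zeta\|_{1,2,h}$ (via elliptic regularity, \eqref{eq20171006_1}, the inverse inequality, and the triangle inequality, exactly as in the last four displayed lines of the proof of Lemma \ref{lem20170604_1_add}) gives $\|v-\tilde v_h\|_{2,2,h}\le Ch\|\zeta\|_{1,2,h}$. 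The triangle inequality then yields
\[
\|\Delta^{-1}\zeta-\widetilde\Delta_h^{-1}\zeta\|_{2,2,h}\le\|\Delta^{-1}(\zeta-\E\zeta)\|_{H^2}+\|v-\tilde v_h\|_{2,2,h}+\|\tilde v_h+\widetilde\Delta_h^{-1}\zeta\|_{2,2,h},
\]
in which the first term is controlled by elliptic regularity together with $\|\zeta-\E\zeta\|_{L^2}\le Ch^2|\zeta|_{2,2,h}\le Ch\|\zeta\|_{1,2,h}$ from \eqref{eq20171006_1} and the inverse inequality.

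The main obstacle is the last, purely discrete term. Subtracting the defining identities for $\tilde v_h$ (from \eqref{eq20180604_9} with $v=-\Delta^{-1}\E\zeta$) and $-\widetilde\Delta_h^{-1}\zeta$ (from \eqref{eq3.2b}) yields $b_h(\tilde v_h+\widetilde\Delta_h^{-1}\zeta,\xi)=(\nabla(\E\zeta-\zeta),\nabla\xi)_h-\beta(\Delta^{-1}(\E\zeta-\zeta),\xi)$; taking $\xi=\tilde v_h+\widetilde\Delta_h^{-1}\zeta$ and using coercivity of $b_h$ reduces the proof to bounding both terms on the right by $Ch\|\zeta\|_{1,2,h}\|\xi\|_{2,2,h}$. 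The $\beta$ term is immediate from Cauchy--Schwarz and $\|\zeta-\E\zeta\|_{L^2}\le Ch\|\zeta\|_{1,2,h}$, but the gradient pairing is delicate: the naive Cauchy--Schwarz combined with the inverse inequality only produces $O(1)$. The plan is therefore to integrate by parts elementwise, treating the bulk term $-(\E\zeta-\zeta,\Delta_h\xi)_\Omega$ via the $L^2$ bound on $\E\zeta-\zeta$ and attacking the edge sum through the jump-mean identity $[fg]_e=[f]_e\{g\}_e+\{f\}_e[g]_e$. The two Morley conformity properties (namely that $[\zeta]_e$ vanishes at the endpoints of each $e$ and $[\partial\xi/\partial n]_e$ vanishes at the midpoint of $e$), combined with the refined trace estimate $\|\zeta-\E\zeta\|_{L^2(\partial K)}\le Ch^{3/2}|\zeta|_{2,2,\omega_K}$ (from the $j=0,1$ cases of \eqref{eq20171006_1} plus the scaled trace inequality) and Simpson-type quadrature, are designed to supply the missing power of $h$, after which the inverse inequality $|\zeta|_{2,2,h}\le Ch^{-1}\|\zeta\|_{1,2,h}$ converts the final bound into the desired form.
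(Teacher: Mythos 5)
Your set-up through the bound $\|v-\tilde v_h\|_{2,2,h}\le Ch\|\zeta\|_{1,2,h}$ is correct and is essentially the paper's own argument (its \eqref{eq20180604_7}--\eqref{eq20180604_11}): the Strang defect $b_h(v-\tilde v_h,\xi)=B_h(v,\xi)$, Lemma \ref{lem20170723_1}, the interpolation bound \eqref{eq20170812_6}, the chain $|\Delta^{-1}\E\zeta|_{3,2,h}\le C\|\E\zeta\|_{H^1}\le C\|\zeta\|_{1,2,h}$, and the estimate for $\|\Delta^{-1}(\zeta-\E\zeta)\|_{H^2}$ (the paper's \eqref{eq20180604_2}) all coincide. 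The structural difference is that you keep the data of the two discrete problems distinct: \eqref{eq3.2b} is driven by $\zeta$ and $\Delta^{-1}\zeta$, while your auxiliary $\tilde v_h$ from \eqref{eq20180604_9} is driven by $\E\zeta$ and $\Delta^{-1}\E\zeta$, which produces your third term $\|\tilde v_h+\widetilde\Delta_h^{-1}\zeta\|_{2,2,h}$. The paper's proof never sees this term, because it treats $-\widetilde\Delta_h^{-1}\zeta$ itself as the discrete solution whose defect against $v=-\Delta^{-1}\E\zeta$ is the pure $B_h$-term; the entire error there is carried by the two pieces you also have.

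The gap is your treatment of that third term: it is a plan rather than a proof, and its central step fails. After elementwise integration by parts, the bulk term and the $\{\E\zeta-\zeta\}[\![\partial\xi/\partial n]\!]$ part of the edge sum are indeed $O(h)\|\zeta\|_{1,2,h}\|\xi\|_{2,2,h}$ (the latter because $[\![\partial\xi/\partial n]\!]$ is linear on each edge and vanishes at the midpoint, hence is mean-free). But in the remaining part $\sum_e\int_e[\![\zeta]\!]\{\partial\xi/\partial n\}\,dS$ the edge mean of $\{\partial\xi/\partial n\}$ pairs against $\int_e[\![\zeta]\!]\,dS$, which does \emph{not} vanish: $[\![\zeta]\!]$ is a quadratic vanishing at the two endpoints of $e$, so $\int_e[\![\zeta]\!]\,dS=-\tfrac{h_e^3}{12}\,[\![\partial^2\zeta/\partial s^2]\!]\neq 0$ in general. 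The sharpest bounds your toolbox yields are $\|[\![\zeta]\!]\|_{L^2(e)}\le Ch^{3/2}|\zeta|_{2,2,\omega_e}$ and $\|\{\partial\xi/\partial n\}\|_{L^2(e)}\le Ch^{-1/2}\|\nabla\xi\|_{0,2,\omega_e}$ (with $\omega_e$ the pair of triangles sharing $e$), giving $Ch\,|\zeta|_{2,2,h}\|\xi\|_{1,2,h}$ after summation; the inverse inequality $|\zeta|_{2,2,h}\le Ch^{-1}\|\zeta\|_{1,2,h}$, which you must invoke to reach the norm $\|\zeta\|_{1,2,h}$ appearing in the lemma, then consumes exactly the factor $h$ you gained, leaving only $C\|\zeta\|_{1,2,h}\|\xi\|_{1,2,h}$. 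The midpoint-vanishing of $[\![\partial\xi/\partial n]\!]$ is of no help here (the offending factor is the average, not the jump), and neither the refined trace estimate nor Simpson-type quadrature changes this counting, since the loss occurs in the $\zeta$-factor rather than in any quadrature error. Controlling this piece by $Ch\|\zeta\|_{1,2,h}\|\xi\|_{2,2,h}$ --- if it is true at all --- would require a genuinely different mechanism, e.g.\ a summation-by-parts argument exploiting cancellation of the contributions $h_e^3[\![\partial^2\zeta/\partial s^2]\!]$ across the mesh, which your proposal does not supply. As written, the proof is therefore incomplete at precisely its self-identified main obstacle.
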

\begin{proof}
By \eqref{eq20180604_9} and \eqref{eq20170710_8_add}, we have
\begin{align}\label{eq20180604_7}
\|v-v_h\|_{2,2,h}^2&=Cb_h(v-v_h,v-I_hv)+C[b_h(v,I_hv-v_h)-\hat{F}_h(I_hv-v_h)]\\
&\le Cb_h(v-v_h,v-I_hv)+CB_h(v,I_hv-v_h)\notag.
\end{align}

Notice we will set $v_h=-\widetilde\Delta_h^{-1}\zeta$, so equation \eqref{eq20180604_9} is used, then $\hat{F}_h$, instead of $F_h$, appears in equation \eqref{eq20180604_7}.

Using Lemma \ref{lem20170723_1}, we have
\begin{align}\label{eq20180604_10}
&\quad\|v-v_h\|_{2,2,h}^2\\
&\le C\|v-v_h\|_{2,2,h}\|v-I_hv\|_{2,2,h}+Ch|\Delta^{-1}\E\zeta|_{3,2,h}\|I_hv-v_h\|_{2,2,h}\notag\\
&\le Ch|\zeta|_{1,2,h}\|v-v_h\|_{2,2,h}+Ch\|\zeta\|_{1,2,h}(\|I_hv-v\|_{2,2,h}+\|v-v_h\|_{2,2,h})\notag.
\end{align}

Let $v=-\Delta^{-1}\E\zeta, v_h=-\widetilde\Delta_h^{-1}\zeta$, then
\begin{align}\label{eq20180604_11}
&\quad\|\Delta^{-1}\E\zeta-\widetilde\Delta_h^{-1}\zeta\|_{2,2,h}\le Ch\|\zeta\|_{1,2,h}.
\end{align}

Combining \eqref{eq20180604_2} and \eqref{eq20180604_11}, we get the conclusion.
\end{proof}

The following bound is a direct result from Lemma \ref{lemma_3.7_add}. 
\begin{lemma}\label{lem20170914_1}
Assume $\widetilde\Delta_h^{-1}$ is defined in \eqref{eq3.2b} and $\zeta\in S_E^h$, then
\begin{align*}
\|\Delta^{-1}\zeta-\widetilde\Delta_h^{-1}\zeta\|_{1,2,h}&\le Ch\|\zeta\|_{1,2,h}.
\end{align*}
\end{lemma}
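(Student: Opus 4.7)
The plan is to obtain this bound as an immediate corollary of Lemma \ref{lemma_3.7_add}, exploiting the elementary fact that the mesh-dependent $\|\cdot\|_{1,2,h}$ norm is dominated by the mesh-dependent $\|\cdot\|_{2,2,h}$ norm. So the entire proof collapses to a monotonicity observation followed by an invocation of the previously established $\|\cdot\|_{2,2,h}$ estimate.

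More precisely, unwinding the definitions of the mesh-dependent norms in the Preliminaries, for any $v$ we have $\|v\|_{1,2,h}^2 = \sum_{K\in\T_h}(\|v\|_{0,2,K}^2+|v|_{1,2,K}^2)$, which is bounded by $\sum_{K\in\T_h}(\|v\|_{0,2,K}^2+|v|_{1,2,K}^2+|v|_{2,2,K}^2) = \|v\|_{2,2,h}^2$. Applying this with $v = \Delta^{-1}\zeta - \widetilde\Delta_h^{-1}\zeta$ and then invoking Lemma \ref{lemma_3.7_add} gives
\begin{align*}
\|\Delta^{-1}\zeta - \widetilde\Delta_h^{-1}\zeta\|_{1,2,h} \le \|\Delta^{-1}\zeta - \widetilde\Delta_h^{-1}\zeta\|_{2,2,h} \le Ch\|\zeta\|_{1,2,h},
\end{align*}
which is exactly the claim. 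There is no genuine obstacle here; the work has already been done in Lemma \ref{lemma_3.7_add} via the enriching-operator splitting $\Delta^{-1}\zeta = (\Delta^{-1}\zeta - \Delta^{-1}\widetilde E\zeta) + \Delta^{-1}\widetilde E\zeta$ together with the interpolation estimate \eqref{eq20170812_6} and Lemma \ref{lem20170723_1}.

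One could in principle try to sharpen the bound to $O(h^2)$ by an Aubin--Nitsche type duality argument, testing against a solution of an auxiliary Poisson problem with data in $L^2$; however, because $\zeta\in S_E^h$ is only a piecewise polynomial without global $H^1$ regularity, the consistency error $B_h(\cdot,\cdot)$ from Lemma \ref{lem20170723_1} still contributes an $O(h)$ term, so no improvement is gained, and the stated order $Ch\|\zeta\|_{1,2,h}$ is in fact the sharp one available from this framework. Since this order is precisely what is needed downstream to combine with the discrete inverse Laplace operators and close the discrete $H^{-1}$ error estimate, the direct monotonicity argument is the natural and correct route.
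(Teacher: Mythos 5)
Your proof is correct and matches the paper's approach exactly: the paper likewise treats Lemma \ref{lem20170914_1} as a direct consequence of Lemma \ref{lemma_3.7_add}, with the only needed observation being that $\|v\|_{1,2,h}\le\|v\|_{2,2,h}$ by definition of the mesh-dependent norms. Your additional remarks on the (non-)availability of an $O(h^2)$ improvement are tangential but do not affect the argument.
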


\begin{remark}
%The key point of this lemma is to utilize $z_h$, which is in $S_E^h$, but it has bound due to the constructed dual problem.
\begin{enumerate}
\item Using the theory of enriching operators, instead of the properties of the Morley elements, we can applied the theory in this paper to other nonconforming elements, based on their enriched conforming elements. If properties of the Morley elements are used to get the bound of $\|\Delta^{-1}\zeta-\hat\Delta_h^{-1}\zeta\|_{2,2,h}$, we can obtain
\begin{align*}%\label{eq20170726_1_add}
\|\Delta^{-1}\zeta-\hat\Delta_h^{-1}\zeta\|_{2,2,h}&\le Ch|\Delta^{-1}\zeta|_{3,2,h}+Ch|\Delta^{-1}\zeta|_{4,2,h}.
\end{align*}
\item Suppose we use the properties of the Morley elements, and we do not employ the enriching operators. When $\zeta\in S_E^h$, $\Delta^{-1}\zeta$ may not be in $H^2_E(\Omega)\cap W^{3,p}(\Omega)$ so that Lemma \ref{lem20170723_1} can not be applied. Then we can only prove the following lemma when the Poisson's ratio is $1$, which is not physical.
\begin{lemma}\label{lem20180425_2}
Let $z\in H_E^{2,h}(\Omega)$ and $\Delta w\in S_E^h$, and when Poisson's ratio is $1$, we have
\begin{align*}
|B_h(w,z)|\le Ch(\|\Delta w\|_{1,2,h}\|z\|_{2,2,h}+\|\Delta w\|_{2,2,h}\|z\|_{1,2,h}).
\end{align*}
\end{lemma}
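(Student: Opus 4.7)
The plan is to exploit the structural simplification of $B_h$ that occurs when Poisson's ratio is $1$, together with the algebraic properties that Morley functions enjoy on interelement edges. First, observe that in the general formula for $B_h$ the coefficients $\pm\tfrac12$ multiplying the cross-derivative terms $\partial^2_{ns}w\,\partial_s z$ and $\partial^2_s w\,\partial_n z$ all come from the factor $(1-\sigma)/2$, so setting $\sigma=1$ collapses $B_h$ to
\begin{align*}
B_h(w,z)=\sum_{K\in\mathcal{T}_h}\int_{\partial K}\Delta w\,\partial_n z\,dS.
\end{align*}
Regrouping this element-wise sum into an edge-wise sum contributes $\int_e[\Delta w\,\partial_n z]\,dS$ on each interior edge $e=K_+\cap K_-$ (with $[\cdot]$ denoting the jump) and $\int_e\Delta w\,\partial_n z\,dS$ on each boundary edge.

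Next I would split each interior contribution via the product-jump identity $[\Delta w\,\partial_n z]=[\Delta w]\{\partial_n z\}+\{\Delta w\}[\partial_n z]$ and treat the two pieces with distinct Morley identities. Because $\Delta w\in S_E^h$ is $P_2$ from each side of $e$ with matching values at the two endpoints, $[\Delta w]|_e = c_e\,s(|e|-s)$ in the arclength variable, where $c_e$ equals (up to a harmless factor) the jump of the constant second tangential derivative of $\Delta w$; an element-level inverse estimate then gives $|c_e|\le Ch^{-1}|\Delta w|_{2,2,K_\pm}$ and hence $\|[\Delta w]\|_{L^2(e)}\le Ch^{3/2}|\Delta w|_{2,2,K_\pm}$. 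Combining with the standard trace inequality on $\{\partial_n z\}$ produces the contribution $Ch\,|\Delta w|_{2,2,K}|z|_{1,2,K}$, up to an $O(h^2)$ remainder. For the second piece, decompose $z=z_M+z_H\in S_E^h\oplus H_E^2(\Omega)$; by $H^2$-conformity $[\partial_n z_H]=0$, while the Morley midpoint condition forces $[\partial_n z_M]$ to be linear in arclength with root at the midpoint of $e$, so $\int_e[\partial_n z]\,dS=0$. Subtracting the edge mean $\overline{\{\Delta w\}}_e$ from the factor $\{\Delta w\}$ costs nothing, and a Cauchy-Schwarz together with an edge-Poincar\'e inequality $\|\{\Delta w\}-\overline{\{\Delta w\}}_e\|_{L^2(e)}\le Ch_e\|\partial_s\{\Delta w\}\|_{L^2(e)}$ and the trace inequality applied to both factors gives the contribution $Ch\,|\Delta w|_{1,2,K}|z|_{2,2,K}$, again up to higher order.

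Boundary edges are handled analogously: the condition $z\in H_E^{2,h}(\Omega)$ forces the $H_E^2$-component of $z$ to have vanishing normal trace on $\partial\Omega$ and the Morley component to have vanishing $\partial_n$ at each boundary midpoint, so $\int_e\partial_n z\,dS=0$ on every $e\subset\partial\Omega$ and the same mean-subtraction argument applies. Summing over $\mathcal{E}_h$ and invoking the discrete Cauchy-Schwarz inequality yields the claimed bound.

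The main obstacle, and the explicit reason the lemma is restricted to the unphysical choice $\sigma=1$, is that the Morley degrees of freedom give orthogonality only against piecewise constants per edge (two vanishing endpoint values for $[\Delta w]$ and one vanishing edge mean for $[\partial_n z]$). When $\sigma\ne 1$ the surviving terms $\partial^2_{ns}w\,\partial_s z$ and $\partial^2_s w\,\partial_n z$ involve tangential second derivatives of $w$ whose jumps are not visible to the Morley nodes of $\Delta w$, so the structural cancellations above are unavailable, and one would instead need the $W^{3,p}$-regularity of $w$ that was exploited in Lemma~\ref{lem20170723_1}, which is precisely what fails when we know only $\Delta w\in S_E^h$. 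A related subtlety is that only the $z_M$ component contributes to $[\partial_n z]$, but the final estimate must read $|z|_{2,2,K}$ rather than $|z_M|_{2,2,K}$; this is handled by keeping $[\partial_n z]$ expressed through $z$ itself when applying the edge-Poincar\'e step, which is exactly the sort of awkwardness that the enriching-operator approach used in Lemmas~\ref{lem20170604_1_add}--\ref{lem20170914_1} is designed to avoid.
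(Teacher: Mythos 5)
Your proposal is correct in substance but takes a genuinely different route from the paper. The paper's proof is essentially a citation: after the same structural observation you make (Poisson's ratio $1$ collapses the bilinear form so that $B_h(w,z)=\sum_{K\in\mathcal{T}_h}\int_{\partial K}\Delta w\,\frac{\partial z}{\partial n}\,dS$), it invokes Lemma 2.3 of \cite{elliott1989nonconforming} together with the inverse inequality, which yields $|\sum_{K\in\mathcal{T}_h}\int_{\partial K}\frac{\partial z}{\partial n}\,w\,dS|\le Ch(\|w\|_{1,2,h}\|z\|_{2,2,h}+\|w\|_{2,2,h}\|z\|_{1,2,h})$ whenever $w,z\in H_E^{2,h}(\Omega)$ and at least one of them lies in $S_E^h$; applying this with $\Delta w\in S_E^h$ in the role of $w$ finishes the proof in two lines. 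You instead reprove that nonconformity estimate from scratch: edge-wise regrouping, the product-jump identity, the vanishing of $[\Delta w]$ at vertices (giving $\|[\Delta w]\|_{L^2(e)}\le Ch^{3/2}|\Delta w|_{2,2,K_\pm}$), and the zero mean of $[\partial_n z]$ coming from the Morley midpoint condition plus the $H^2$-conformity of the non-Morley part of $z$. This is precisely the mechanism hidden inside the cited lemma, so the underlying mathematics coincides; what your version buys is self-containedness and an explicit display of which Morley degrees of freedom produce the cancellation, while the paper's version buys brevity and avoids the risk of slips in the edge estimates.

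One such slip does occur as literally written. For the piece $\int_e\{\Delta w\}[\partial_n z]\,dS$ you subtract the edge mean of $\{\Delta w\}$ and then apply ``the trace inequality to both factors.'' The plain trace inequality gives $\|[\partial_n z]\|_{L^2(e)}\le C(h^{-1/2}|z|_{1,2,K_\pm}+h^{1/2}|z|_{2,2,K_\pm})$, and the $h^{-1/2}$ part, multiplied against $Ch_e\|\partial_s\{\Delta w\}\|_{L^2(e)}\le Ch^{1/2}|\Delta w|_{1,2,K_\pm}$, produces an $O(1)$ term $C|\Delta w|_{1,2,K}|z|_{1,2,K}$ with no factor of $h$ --- not a higher-order remainder, so the claimed bound would fail at this step. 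The repair is to use the zero-mean property of $[\partial_n z]$ a second time: since $\int_e[\partial_n z]\,dS=0$, write $[\partial_n z]$ as the difference of the two one-sided traces each minus its own edge mean, and apply the trace--Poincar\'e estimate $\|v-\bar v_e\|_{L^2(e)}\le Ch^{1/2}|v|_{1,2,K}$ with $v=\partial_n z|_{K_\pm}$, which gives $\|[\partial_n z]\|_{L^2(e)}\le Ch^{1/2}(|z|_{2,2,K_+}+|z|_{2,2,K_-})$ and hence the desired $Ch\,|\Delta w|_{1,2,K}|z|_{2,2,K}$. Your closing remark about ``keeping $[\partial_n z]$ expressed through $z$ itself'' suggests this is what you intend, but the middle paragraph needs that correction, and the same point applies on boundary edges. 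With this repair, and with inverse estimates on the piecewise polynomial $\Delta w$ to absorb the $O(h^2)$ remainders into the stated right-hand side, your argument is complete and recovers the paper's conclusion without citing \cite{elliott1989nonconforming}.
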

\begin{proof}
By Lemma 2.3 in \cite{elliott1989nonconforming} and the inverse inequality, we get when $w, z\in H_E^{2,h}(\Omega)$, and at least one of them is in $S_E^h$, then
\begin{align*}%\label{eq20180425_3}
|\sum_{K\in\mathcal{T}_h}\int_{\partial K}\frac{\partial z}{\partial n}w|\le Ch(\|w\|_{1,2,h}\|z\|_{2,2,h}+\|w\|_{2,2,h}\|z\|_{1,2,h}).
\end{align*}

When Poisson's ratio is $1$, bilinear form $a_h(u,v)$ in \eqref{eq20170504_9} and $B_h(w,z)$ become
\begin{align}\label{eq20180426_6}
a_h(u,v)&=\sum_{K\in\mathcal{T}_h}\int_K\Delta u\Delta v,\\
B_h(w,z)& = \sum_{K\in\mathcal{T}_h}\int_{\partial K}\Delta w\frac{\partial z}{\partial n}dS.\label{eq20180426_7}
\end{align}

Then we have
\begin{align*}
|B_h(w,z)|\le Ch(\|\Delta w\|_{1,2,h}\|z\|_{2,2,h}+\|\Delta w\|_{2,2,h}\|z\|_{1,2,h}).
\end{align*}
\end{proof}
\end{enumerate}
\end{remark}

Before we give the relations between operators $\underline{\Delta}_h^{-1}$, $\Delta_h^{-1}$ and $\Delta^{-1}$, we need an extra lemma.
\begin{lemma}\label{lem20180424_1}
The operators $\underline{\Delta}_h^{-1}$ and $\Delta_h^{-1}$ are defined in \eqref{eq3.2} and \eqref{eq20171106_8}, then for any $\zeta\in L^2(\Omega)$, we have
\begin{align*}
&(\nabla\Delta^{-1}\zeta,\nabla(\underline{\Delta}_h^{-1}\zeta-\Delta_h^{-1}\zeta))_h+(\zeta, \underline{\Delta}_h^{-1}\zeta-\Delta_h^{-1}\zeta)\\
&\quad\le Ch\|\underline{\Delta}_h^{-1}\zeta-\Delta_h^{-1}\zeta\|_{1,2,h}\|\zeta\|_{0,2,h}.
\end{align*}
\end{lemma}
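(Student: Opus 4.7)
The strategy is to derive a weak equation for $\psi := \underline{\Delta}_h^{-1}\zeta - \Delta_h^{-1}\zeta \in W_h$, apply element-wise integration by parts, and control the resulting skeleton residual. Set $v := \Delta^{-1}\zeta$; this belongs to $H^2(\Omega)$ and satisfies $\Delta v = \zeta$ in $\Omega$, $\partial v/\partial n = 0$ on $\partial\Omega$, and $\|v\|_{H^2}\le C\|\zeta\|_{L^2}$ by elliptic regularity.

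First I would subtract the defining identity \eqref{eq20171106_8} for $\Delta_h^{-1}\zeta$ from \eqref{eq3.2} for $\underline{\Delta}_h^{-1}\zeta$ to obtain
\begin{align*}
(\nabla\psi,\nabla w_h)_h+(\psi,w_h) \;=\; (\nabla v,\nabla w_h)_h+(\zeta,w_h)\qquad\forall\,w_h\in W_h,
\end{align*}
so that the left-hand side of the lemma is precisely the right-hand side of this identity tested against $w_h=\psi$. It therefore suffices to bound the right-hand side by $Ch\|w_h\|_{1,2,h}\|\zeta\|_{L^2}$ uniformly in $w_h\in W_h$.

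Next I would carry out element-wise integration by parts on the right-hand side. Because $\Delta v=\zeta$ on each $K$, because $\partial v/\partial n$ is single-valued across interior edges (as $v\in H^2$), and because $\partial v/\partial n=0$ on $\partial\Omega$, the bulk terms cancel against $(\zeta,w_h)$ and only a skeleton residual survives:
\begin{align*}
(\nabla v,\nabla w_h)_h+(\zeta,w_h) \;=\; \sum_{e\in\mathcal{E}_h^{\mathrm{int}}}\int_e \frac{\partial v}{\partial n}\,[w_h]\,ds,
\end{align*}
where $[w_h]$ denotes the jump of $w_h$ across $e$. The proof is thus reduced to estimating this jump integral.

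Finally, to extract the factor $h$, on each interior edge $e$ I would split $\partial v/\partial n$ into its $L^2(e)$-projection $Q^e(\partial v/\partial n)$ onto a polynomial subspace chosen so that the projected principal part pairs to zero against $[w_h]$ via a jump-orthogonality intrinsic to $W_h$ (analogous to the mid-edge continuity of Morley elements). The remaining oscillation is controlled by a scaled Bramble-Hilbert bound $\|\partial_n v-Q^e\partial_n v\|_{L^2(e)}\le Ch^{1/2}\|v\|_{H^2(\omega_e)}$, a discrete trace inequality yields $\bigl(\sum_e\|[w_h]\|_{L^2(e)}^2\bigr)^{1/2}\le Ch^{-1/2}\|w_h\|_{1,2,h}$, and Cauchy-Schwarz across edges combined with $\|v\|_{H^2}\le C\|\zeta\|_{L^2}$ then delivers the desired bound. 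The main obstacle will be establishing the jump-orthogonality between $Q^e(\partial_n v)$ and $[w_h]$ for $w_h\in W_h$, since without this cancellation a naive Cauchy-Schwarz loses the factor $h$ and only produces an $O(1)$ constant; the structural properties of $W_h$ and the careful choice of the edge projection $Q^e$ are exactly what is needed here.
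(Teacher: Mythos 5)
Your opening reduction is sound, and it mirrors how the paper itself deploys this lemma one step later (in Lemma \ref{lem20171106_1}): subtracting \eqref{eq20171106_8} from \eqref{eq3.2} gives
\begin{align*}
(\nabla\psi,\nabla w_h)_h+(\psi,w_h)=(\nabla\Delta^{-1}\zeta,\nabla w_h)_h+(\zeta,w_h)\qquad\forall\, w_h\in W_h,
\end{align*}
with $\psi:=\underline{\Delta}_h^{-1}\zeta-\Delta_h^{-1}\zeta$, and since $\Delta(\Delta^{-1}\zeta)=\zeta$ with homogeneous Neumann data, element-wise integration by parts does collapse the right-hand side to the skeleton sum $\sum_{e}\int_e \partial_n(\Delta^{-1}\zeta)\,[w_h]\,ds$. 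The genuine gap is exactly the step you flag as ``the main obstacle,'' and it cannot be repaired within your strategy: $W_h$ is the \emph{fully discontinuous} space of piecewise polynomials of degree at most $6$, with no vertex continuity, no mid-edge normal-derivative continuity, and no zero-mean-jump condition of Crouzeix--Raviart or Morley type, so there is no jump-orthogonality of any order to exploit. Worse, the statement you reduce the lemma to --- that $(\nabla\Delta^{-1}\zeta,\nabla w_h)_h+(\zeta,w_h)\le Ch\|w_h\|_{1,2,h}\|\zeta\|_{0,2,h}$ \emph{uniformly} in $w_h\in W_h$ --- is false. Take $w_h=\chi_S$, the indicator function of a union $S$ of elements: then $(\nabla\Delta^{-1}\zeta,\nabla\chi_S)_h=0$ while $(\zeta,\chi_S)=\int_S\zeta\,dx$, which is $O(1)$ for generic mean-zero $\zeta$, whereas your claimed bound is $O(h)$. (Even granting an edge orthogonality for the projected part, your bookkeeping loses the factor $h$ anyway: $h^{1/2}$ from the Bramble--Hilbert estimate times $h^{-1/2}$ from the discrete trace inequality is $O(1)$; one needs the jump factor itself to gain $h^{1/2}$, which again presupposes zero-mean jumps.) Any viable proof must therefore use the particular function $\psi$, not an arbitrary element of $W_h$.

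The paper's proof takes a route that never integrates by parts and never touches the skeleton: it introduces two conforming elliptic projections, $P_2(\Delta^{-1}\zeta)$ and $P_1\psi$, onto a conforming space $V\cap L^2_0$; it inserts $\pm\nabla P_2(\Delta^{-1}\zeta)$, uses the $P_1$-orthogonality to replace $\nabla\psi$ by $\nabla P_1\psi$ against $\nabla P_2(\Delta^{-1}\zeta)$, applies the continuous identity \eqref{eq6} to the \emph{conforming} function $P_1\psi$ (so that $(\nabla\Delta^{-1}\zeta,\nabla P_1\psi)=-(\zeta,P_1\psi)$), and concludes with $O(h)$ projection-error estimates, the elliptic regularity bound $\|\Delta^{-1}\zeta\|_{H^2}\le C\|\zeta\|_{L^2}$, and an inverse inequality. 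You should know, however, that the intrinsic difficulty you ran into does not vanish there either: it resurfaces in the paper's final estimate $(\zeta,\psi-P_1\psi)\le Ch^2\|\psi\|_{2,2,h}\|\zeta\|_{0,2,h}$, an approximation property that an arbitrary discontinuous $\psi\in W_h$ does not satisfy, so that step too ultimately requires more than membership of $\psi$ in $W_h$. In any case, as written your proposal cannot be completed, because it rests on a uniform bound over $W_h$ that is demonstrably false and on an orthogonality that the space does not possess.
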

\begin{proof}
Define an elliptic projection $P_1: L^2(\Omega)\rightarrow V\cap L^2_0$ by
\begin{align*}
(\nabla\zeta-\nabla P_1\zeta,\nabla v)_h=0\qquad\forall v\in V\cap L^2_0,
\end{align*}
where $V$ can be a conforming space consisting of piecewise polynomials.

Define another elliptic projection $P_2: H^2(\Omega)\rightarrow V\cap L^2_0$ by
\begin{align*}
(\nabla\Delta^{-1}\zeta-\nabla P_2(\Delta^{-1}\zeta),\nabla v)=0\qquad\forall v\in V\cap L^2_0.
\end{align*}

Then we have
\begin{align}\label{eq20180425_1}
&\quad(\nabla\Delta^{-1}\zeta,\nabla(\underline{\Delta}_h^{-1}\zeta-\Delta_h^{-1}\zeta))_h+(\zeta, \underline{\Delta}_h^{-1}\zeta-\Delta_h^{-1}\zeta)\\
&\le(\nabla\Delta^{-1}\zeta-\nabla P_2(\Delta^{-1}\zeta),\nabla(\underline{\Delta}_h^{-1}\zeta-\Delta_h^{-1}\zeta))_h+(\zeta, \underline{\Delta}_h^{-1}\zeta-\Delta_h^{-1}\zeta)\notag\\
&\quad+(\nabla P_2(\Delta^{-1}\zeta),\nabla(\underline{\Delta}_h^{-1}\zeta-\Delta_h^{-1}\zeta))_h\notag\\
&\le(\nabla\Delta^{-1}\zeta-\nabla P_2(\Delta^{-1}\zeta),\nabla(\underline{\Delta}_h^{-1}\zeta-\Delta_h^{-1}\zeta))_h+(\zeta, \underline{\Delta}_h^{-1}\zeta-\Delta_h^{-1}\zeta)\notag\\
&\quad+(\nabla P_2(\Delta^{-1}\zeta),\nabla P_1(\underline{\Delta}_h^{-1}\zeta-\Delta_h^{-1}\zeta))\notag\\
&\le(\nabla\Delta^{-1}\zeta-\nabla P_2(\Delta^{-1}\zeta),\nabla(\underline{\Delta}_h^{-1}\zeta-\Delta_h^{-1}\zeta))_h+(\zeta, \underline{\Delta}_h^{-1}\zeta-\Delta_h^{-1}\zeta)\notag\\
&\quad+(\nabla P_2(\Delta^{-1}\zeta)-\nabla\Delta^{-1}\zeta,\nabla P_1(\underline{\Delta}_h^{-1}\zeta-\Delta_h^{-1}\zeta))+(\nabla\Delta^{-1}\zeta,\nabla P_1(\underline{\Delta}_h^{-1}\zeta-\Delta_h^{-1}\zeta))\notag\\
&\le(\nabla\Delta^{-1}\zeta-\nabla P_2(\Delta^{-1}\zeta),\nabla(\underline{\Delta}_h^{-1}\zeta-\Delta_h^{-1}\zeta))_h+(\nabla P_2(\Delta^{-1}\zeta)-\nabla\Delta^{-1}\zeta,\notag\\
&\quad\nabla P_1(\underline{\Delta}_h^{-1}\zeta-\Delta_h^{-1}\zeta))+(\zeta, \underline{\Delta}_h^{-1}\zeta-\Delta_h^{-1}\zeta-P_1(\underline{\Delta}_h^{-1}\zeta-\Delta_h^{-1}\zeta))\notag\\
&\le Ch|\underline{\Delta}_h^{-1}\zeta-\Delta_h^{-1}\zeta|_{1,2,h}\|\zeta\|_{0,2,h}+Ch^2\|\underline{\Delta}_h^{-1}\zeta-\Delta_h^{-1}\zeta\|_{2,2,h}\|\zeta\|_{0,2,h}\notag\\
&\le Ch\|\underline{\Delta}_h^{-1}\zeta-\Delta_h^{-1}\zeta\|_{1,2,h}\|\zeta\|_{0,2,h}.\notag
\end{align}
\end{proof}

Next some lemmas related to operators $\underline{\Delta}_h^{-1}$ and $\Delta_h^{-1}$ are proved below.

\begin{lemma}\label{lem20171106_1}
Assume $\underline\Delta_h^{-1}$ is defined in \eqref{eq3.2} and $\zeta\in L^2(\Omega)$, then
\begin{align}
\|\underline\Delta_h^{-1}\zeta-\Delta_h^{-1}\zeta\|_{1,2,h}\le Ch\|\zeta\|_{0,2,h}.\notag
\end{align}
\end{lemma}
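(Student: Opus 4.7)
The plan is to derive this lemma directly from the two defining equations \eqref{eq3.2} and \eqref{eq20171106_8}, combined with the estimate already established in Lemma \ref{lem20180424_1}. Because both $\underline{\Delta}_h^{-1}\zeta$ and $\Delta_h^{-1}\zeta$ live in the same space $W_h$ and satisfy Galerkin-type equations with the same bilinear form on the left-hand side, subtracting the two identities should yield an error equation whose left side reduces to exactly $\|\underline{\Delta}_h^{-1}\zeta-\Delta_h^{-1}\zeta\|_{1,2,h}^2$ and whose right side is precisely the quantity bounded by Lemma \ref{lem20180424_1}.

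Concretely, the first step is to subtract \eqref{eq20171106_8} from \eqref{eq3.2} to obtain, for every $w_h\in W_h$,
\begin{align*}
(\nabla(\underline{\Delta}_h^{-1}\zeta-\Delta_h^{-1}\zeta),\nabla w_h)_h + (\underline{\Delta}_h^{-1}\zeta-\Delta_h^{-1}\zeta, w_h)
= (\nabla\Delta^{-1}\zeta,\nabla w_h)_h + (\zeta, w_h).
\end{align*}
The second step is to take the admissible test function $w_h = \underline{\Delta}_h^{-1}\zeta-\Delta_h^{-1}\zeta \in W_h$. The left-hand side then collapses to
\begin{align*}
\|\nabla(\underline{\Delta}_h^{-1}\zeta-\Delta_h^{-1}\zeta)\|_{0,2,h}^2 + \|\underline{\Delta}_h^{-1}\zeta-\Delta_h^{-1}\zeta\|_{0,2,h}^2 = \|\underline{\Delta}_h^{-1}\zeta-\Delta_h^{-1}\zeta\|_{1,2,h}^2.
\end{align*}

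The third step is to apply Lemma \ref{lem20180424_1} verbatim to the right-hand side, which bounds it by $Ch\|\underline{\Delta}_h^{-1}\zeta-\Delta_h^{-1}\zeta\|_{1,2,h}\|\zeta\|_{0,2,h}$. Dividing through by $\|\underline{\Delta}_h^{-1}\zeta-\Delta_h^{-1}\zeta\|_{1,2,h}$ (the trivial case of equality to zero being automatic) produces the desired inequality. I do not expect any obstacle here: the real work was done in Lemma \ref{lem20180424_1}, where the two conforming elliptic projections $P_1$ and $P_2$ were inserted to gain an order of $h$ from the standard approximation estimates and to absorb the nonconformity of the test function. The present lemma is essentially a clean packaging of that estimate into an $H^1$-type bound between the two discrete inverse Laplacians.
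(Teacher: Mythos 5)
Your proposal is correct and follows exactly the paper's own argument: subtract \eqref{eq20171106_8} from \eqref{eq3.2}, test with $w_h=\underline{\Delta}_h^{-1}\zeta-\Delta_h^{-1}\zeta$ so that the left side becomes $\|\underline{\Delta}_h^{-1}\zeta-\Delta_h^{-1}\zeta\|_{1,2,h}^2$, and then invoke Lemma \ref{lem20180424_1} and cancel one factor. Nothing is missing; the paper's proof is the same three-line computation.
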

\begin{proof}
Subtracting \eqref{eq20171106_8} from \eqref{eq3.2}, choosing $w_h=\underline{\Delta}_h^{-1}\zeta-\Delta_h^{-1}\zeta$, and using Lemma \ref{lem20180424_1}, we obtain
\begin{align}\label{eq20171106_9}
&\|\nabla\underline{\Delta}_h^{-1}\zeta-\nabla\Delta_h^{-1}\zeta\|_{0,2,h}^2+\|\underline{\Delta}_h^{-1}\zeta-\Delta_h^{-1}\zeta\|_{0,2,h}^2\\
&\qquad=(\nabla\Delta^{-1}\zeta,\nabla(\underline{\Delta}_h^{-1}\zeta-\Delta_h^{-1}\zeta))_h+(\zeta, \underline{\Delta}_h^{-1}\zeta-\Delta_h^{-1}\zeta)\notag\\
&\qquad\le Ch\|\underline{\Delta}_h^{-1}\zeta-\Delta_h^{-1}\zeta\|_{1,2,h}\|\zeta\|_{0,2,h}\notag.
\end{align}

%Integration by part, and using Lemma \ref{lem20170804_1}, then the right-hand side of \eqref{eq20171106_9} can be bounded by
%\begin{align}\label{eq20171106_10}
%&(\nabla\Delta^{-1}\zeta,\nabla(\underline{\Delta}_h^{-1}\zeta-\Delta_h^{-1}\zeta))_h+(\zeta, \underline{\Delta}_h^{-1}\zeta-\Delta_h^{-1}\zeta)\\
%&\qquad=\sum_{K\in\T_h}\int_{\p K}\frac{\p\Delta^{-1}\zeta}{\p n}(\underline{\Delta}_h^{-1}\zeta-\Delta_h^{-1}\zeta)\notag\\
%&\qquad\le Ch|\underline{\Delta}_h^{-1}\zeta-\Delta_h^{-1}\zeta|_{1,2,h}|\zeta|_{0,2,h}\notag\\
%&\qquad\le Ch\|\underline{\Delta}_h^{-1}\zeta-\Delta_h^{-1}\zeta\|_{1,2,h}\|\zeta\|_{0,2,h}\notag.
%\end{align}
Then the lemma is proved.
\end{proof}

\begin{lemma}\label{lem20171106_2}
Assume $\Delta_h^{-1}$ is defined in \eqref{eq20171106_8} and $\zeta\in L^2(\Omega)$, then
\begin{align}
\|\Delta_h^{-1}\zeta-\Delta^{-1}\zeta\|_{1,2,h}\le Ch\|\zeta\|_{0,2,h}.\notag
\end{align}
\end{lemma}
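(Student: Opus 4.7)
The plan is to proceed by triangle inequality, using Lemma \ref{lem20171106_1} to reduce everything to estimating $\|\underline\Delta_h^{-1}\zeta-\Delta^{-1}\zeta\|_{1,2,h}$, and then to recognize the latter difference as an elliptic projection error in the broken $H^1$ inner product.

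First I would write
\begin{align*}
\|\Delta_h^{-1}\zeta-\Delta^{-1}\zeta\|_{1,2,h}\le \|\Delta_h^{-1}\zeta-\underline\Delta_h^{-1}\zeta\|_{1,2,h}+\|\underline\Delta_h^{-1}\zeta-\Delta^{-1}\zeta\|_{1,2,h}.
\end{align*}
The first term on the right is $\le Ch\|\zeta\|_{0,2,h}$ directly from Lemma \ref{lem20171106_1}, so it remains to control the second term.

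For the second term, I would exploit the defining identity \eqref{eq3.2} of $\underline\Delta_h^{-1}$, which rearranges to the Galerkin orthogonality
\begin{align*}
(\nabla(\underline{\Delta}_h^{-1}\zeta-\Delta^{-1}\zeta),\nabla w_h)_h+(\underline{\Delta}_h^{-1}\zeta-\Delta^{-1}\zeta,w_h)=0\qquad\forall\,w_h\in W_h.
\end{align*}
Thus $\underline\Delta_h^{-1}\zeta$ is the best approximation of $\Delta^{-1}\zeta$ out of $W_h$ with respect to the broken $H^1$ inner product. By the standard C\'ea-type argument this gives
\begin{align*}
\|\underline\Delta_h^{-1}\zeta-\Delta^{-1}\zeta\|_{1,2,h}\le \inf_{w_h\in W_h}\|w_h-\Delta^{-1}\zeta\|_{1,2,h}.
\end{align*}

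To bound the right-hand side I would invoke elliptic regularity for the Neumann Laplacian on $\Omega$ to obtain $\Delta^{-1}\zeta\in H^2(\Omega)$ with $\|\Delta^{-1}\zeta\|_{H^2}\le C\|\zeta\|_{0,2,\Omega}$. Since $W_h$ contains the piecewise linear (indeed degree $\le 6$) polynomials on $\mathcal{T}_h$, the usual local polynomial interpolation estimate gives
\begin{align*}
\inf_{w_h\in W_h}\|w_h-\Delta^{-1}\zeta\|_{1,2,h}\le Ch\,|\Delta^{-1}\zeta|_{H^2}\le Ch\|\zeta\|_{0,2,h}.
\end{align*}
Combining the two estimates and feeding into the triangle inequality yields the claim.

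The main obstacle here is not substantial: everything is routine once the Galerkin orthogonality is observed, so the only things to be careful about are (i) checking that the broken $H^1$ bilinear form $(\nabla\cdot,\nabla\cdot)_h+(\cdot,\cdot)$ is a genuine inner product on the discontinuous space $W_h$ (it is, since $W_h$ is piecewise polynomial and the form is coercive in the broken $H^1$ norm), and (ii) ensuring that elliptic regularity $H^2$-lift for $-\Delta^{-1}$ is available under the standing smoothness assumption on $\partial\Omega$. Both are already in force, so the argument goes through cleanly.
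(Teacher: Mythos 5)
Your proof is correct and takes essentially the same route as the paper: the paper also splits the error via $\underline{\Delta}_h^{-1}$, bounds $\|\underline{\Delta}_h^{-1}\zeta-\Delta_h^{-1}\zeta\|_{1,2,h}$ by Lemma \ref{lem20171106_1}, and bounds $\|\underline{\Delta}_h^{-1}\zeta-\Delta^{-1}\zeta\|_{1,2,h}\le Ch\|\zeta\|_{0,2,h}$ by observing that $\underline{\Delta}_h^{-1}\zeta$ is the projection of $\Delta^{-1}\zeta$ onto $W_h$ in the broken $H^1$ inner product (stated there as \eqref{eq20171106_11} with $j=1$), finishing with the triangle inequality. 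You simply make explicit the Galerkin orthogonality, C\'ea bound, elliptic regularity and interpolation steps that the paper's one-line remark leaves implicit.
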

\begin{proof}
Observe $\underline\Delta_h^{-1}$ is a projection of $\Delta^{-1}$, then we can prove
\begin{align}\label{eq20171106_11}
\|\underline\Delta_h^{-1}\zeta-\Delta^{-1}\zeta\|_{1,2,h}\le Ch^j\|\zeta\|_{j-1,2,h},\quad j=1,2.
\end{align}
Combining \eqref{eq20171106_11} and Lemma \ref{lem20171106_1}, and using the triangle inequality, this lemma can be proved.
\end{proof}

\begin{remark}\label{rem20171001_1}
\begin{enumerate}
%\item The regularity requirement on $u$ in \cite{elliott1989nonconforming} is $u\in H^4(\Omega)$, and
%if the projection is defined in \eqref{eq20171001_6}, then the requirement on regularity is only $\Delta^{-1}\zeta\in H^{2}(\Omega)\cap H^{3,h}(\Omega)$. The optimal error estimates of $\|\hat\Delta_h^{-1}\zeta-\Delta^{-1}\zeta\|_{1,2,h}$ and $\|\hat\Delta_h^{-1}\zeta-\Delta^{-1}\zeta\|_{2,2,h}$ were proved, where $\hat\Delta_h^{-1}\zeta$ can be considered as a generalized projection of $\Delta^{-1}\zeta$. One key point that the optimal error estimates could be proved is because of the following orthogonality result
%\begin{align*}
%b_h(\Delta^{-1}\zeta-\hat\Delta_h^{-1}\zeta,\xi)=0\qquad\forall\xi\in S^h_E.
%\end{align*}
%Note here $\zeta$ can be an arbitrary function in $S_E^h$.

%Define $\Delta^{-1}\zeta$ and $\hat\Delta^{-1}\zeta$ by
%\begin{align*}
%a_h(\Delta^{-1}\zeta,\xi)=f(\xi)\qquad\forall\xi\in H^{2}_E,\\
%a_h(\hat\Delta^{-1}\zeta,\xi)=f(\xi)\qquad\forall\xi\in S^h_E,
%\end{align*}
%where $f$ is assumed to be the right-hand side. In this case, $\zeta$ can be an arbitrary function in $S_E^h$, and $\hat\Delta^{-1}\zeta$ is always consistent with $\Delta^{-1}\zeta$. We can derive the estimates $\|\widetilde\Delta_h^{-1}\zeta-\Delta^{-1}\zeta\|_{1,2,h}$ and $\|\widetilde\Delta_h^{-1}\zeta-\Delta^{-1}\zeta\|_{2,2,h}$, which are useful to derive the discrete energy law, and finally the $H^{-1}$ error estimates.

\item In Lemmas \ref{lem20170604_1_add}--\ref{lem20170914_1}, the regularity requirement on $u$ is $\Delta^{-1}u\in H^{2}(\Omega)\cap H^{3,h}(\Omega)$. It is proved that the error bounds can depend on norm $\|\cdot\|_{3,2,h}$, instead of norm $\|\cdot\|_{4,2,\Omega}$. Hence the lemmas in this subsection can be considered as a generalization of the error bounds in \cite{elliott1989nonconforming}.

\item The idea of proposing the bilinear form $b_h(\cdot,\cdot)$ defined in equation \eqref{eq20170710_2} is that \eqref{eq20170710_2} automatically holds for $\eta\in H^{2,h}_E(\Omega)$, but the bilinear form (5.2b) in \cite{elliott1989nonconforming} holds under the condition that $u\in H^{4}(\Omega)$. This is the main reason why the regularity requirement in paper \cite{elliott1989nonconforming} can be removed. Another advantage of using this generalized projection is the proofs of the error estimates can be simplified (see proofs of Lemmas \ref{lem20170604_1_add}--\ref{lem20170710_1_add}).

%\item In Lemma \ref{lem20170914_1}, the optimal error order of $\|\widetilde\Delta_h^{-1}\zeta-\Delta^{-1}\zeta\|_{1,2,h}$ can not be proved by duality technique because %$\underset{\p K\in\mathcal{E}_h}{\sum}\int_{\p K}\Delta z\frac{\p v_h}{\p n}dS$ and $B_h(z,v_h)$, which can not be bounded optimally, will appear in the last term of \eqref{eq20170712_5_add}.
\end{enumerate}
\end{remark}
%Consider the bilinear form
%\begin{align}\label{eq20170710_1}
%b_h(v,\xi)=a_h(v,\xi)+\beta(v,\xi)\qquad \forall v,\xi\in H^{2,h}(\Omega).
%\end{align}

%For all $t\in [0,T]$, define $v\in H_E^2(\Omega)$ and $v_h\in S_E^h(\Omega)$ by %\sout{Hello World}
%\begin{align}
%a_h(v,\eta)=-(\nabla\Delta v,\nabla \eta)_h+B_h(v,\eta):=F_h(\eta)\qquad\forall \eta\in H_E^2(\Omega),\label{eq20170710_2}\\%+\beta (v,\eta)
%a_h(v_h,\xi)=-(\nabla\Delta v,\nabla \xi)_h+B_h(v,\xi):=\tilde{F}_h(\xi)\qquad\forall \xi\in S_E^h(\Omega),\label{eq20170710_3}%+\beta (v,\xi)
%\end{align}
%
%Then we know 
%\begin{align}\label{eq20170723_1}
%a_h(-\Delta^{-1}u,\eta)&=(\nabla u,\nabla \eta)_h-B_h(\Delta^{-1}u,\eta)\qquad\forall \eta\in H_E^2(\Omega),\\%+\beta (-\Delta^{-1}u,\eta)
%a_h(-\hat\Delta_h^{-1}u,\xi)&=(\nabla u,\nabla \xi)_h-B_h(\Delta^{-1}u,\xi)\qquad\forall \xi\in S_E^h(\Omega).%+\beta (-\Delta^{-1}u,\xi)
%\end{align}
%
%\begin{remark}
%Here we only need the regularity assumption $u\in H^{\frac52+\epsilon}(\Omega)$, not higher regularity, i.e., $u\in H^{4}(\Omega)$. Because the right-hand sides of \eqref{eq20170710_2} and \eqref{eq20170710_3} are defined piecewise, and it is defined up to piecewise order 3, not 4, then the highest regularity requirement is $\frac52+\delta$, not $\frac72+\delta$. This means the Cahn-Hilliard equation can be more adaptable to domain which is not convex or not polygonal.
%\end{remark}

\subsection{The discrete energy law and the discrete stability results}

In order to mimic the continuous energy law in \eqref{eq2.5}, we consider the discrete energy law under some mesh constraints in this subsection. A lemma is needed to prove the discrete energy law.
First we give the bound of the $L^2$ norm interpolation.
\begin{lemma}
For any $\zeta\in L^2(\Omega)$, then
\begin{align*}
\|\zeta\|_{L^2}^2\le\|\nabla\Delta_h^{-1}\zeta\|_{L^2}^2+\|\nabla \zeta\|_{0,2,h}^2.
\end{align*}
\end{lemma}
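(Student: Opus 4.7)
The plan is to view the claimed inequality as a discrete analog of the standard interpolation estimate $\|\zeta\|_{L^2}^2\le \|\zeta\|_{H^{-1}}\|\nabla\zeta\|_{L^2}$. In the continuous setting, choosing $v=\zeta$ in \eqref{eq6} yields $\|\zeta\|_{L^2}^2=-(\nabla\Delta^{-1}\zeta,\nabla\zeta)$, and Young's inequality then gives $\|\zeta\|_{L^2}^2\le \tfrac12\|\nabla\Delta^{-1}\zeta\|_{L^2}^2+\tfrac12\|\nabla\zeta\|_{L^2}^2$. Since the right-hand side of the lemma is $+\infty$ unless $\zeta$ is piecewise $H^1$, I would work under that hypothesis and seek to transfer this identity to the discrete operator $\Delta_h^{-1}$.

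The concrete approach is to test the defining relation \eqref{eq20171106_8} for $\Delta_h^{-1}\zeta$ against $\zeta$ itself (legitimate when $\zeta\in W_h$, which includes the paper's intended application $\zeta\in S_E^h\subset W_h$; the general $L^2$ case can be handled by a density argument since $W_h$ is rich enough). This yields
\begin{align*}
\|\zeta\|^2 \;=\; -(\nabla\Delta_h^{-1}\zeta,\nabla\zeta)_h \;-\;(\Delta_h^{-1}\zeta,\zeta)\;+\;(\Delta^{-1}\zeta,\zeta).
\end{align*}
I would apply Young's inequality to the first term,
$-(\nabla\Delta_h^{-1}\zeta,\nabla\zeta)_h\le \tfrac12\|\nabla\Delta_h^{-1}\zeta\|_{L^2}^2+\tfrac12\|\nabla\zeta\|_{0,2,h}^2$, and estimate the remaining two terms by grouping them as $(\Delta^{-1}\zeta-\Delta_h^{-1}\zeta,\zeta)$ and invoking Lemma \ref{lem20171106_2} to get $|(\Delta^{-1}\zeta-\Delta_h^{-1}\zeta,\zeta)|\le Ch\|\zeta\|_{L^2}^2$.

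Rearranging, $(1-Ch)\|\zeta\|_{L^2}^2\le \tfrac12\|\nabla\Delta_h^{-1}\zeta\|_{L^2}^2+\tfrac12\|\nabla\zeta\|_{0,2,h}^2$, and for $h$ small enough the looseness between the factor $1$ in the claim and the factor $\tfrac12$ coming from Young absorbs the $Ch\|\zeta\|^2$ remainder. Alternatively, one can avoid testing issues entirely by applying the continuous interpolation inequality first and then replacing $\|\nabla\Delta^{-1}\zeta\|$ by $\|\nabla\Delta_h^{-1}\zeta\|$ using the triangle inequality together with Lemma \ref{lem20171106_2}.

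The main obstacle is justifying the substitution $w_h=\zeta$ (or the passage from the continuous identity to its piecewise analog): for $\zeta\in L^2\setminus W_h$ one must rely on either a density/projection argument or on the fact that in applications $\zeta$ already lies in a discrete subspace of $W_h$. A secondary technical point is ensuring that the piecewise integration by parts producing the identity does not leave interior-edge jump contributions uncontrolled; using $\Delta_h^{-1}$'s definition as an identity against $w_h\in W_h$ (rather than elementwise integration by parts) bypasses this cleanly, which is why I would favour the testing-against-$\zeta$ route above.
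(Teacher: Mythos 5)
Your proof is correct and essentially identical to the paper's: the paper likewise tests \eqref{eq20171106_8} with $w_h=\zeta$, applies Young's inequality to $-(\nabla\Delta_h^{-1}\zeta,\nabla\zeta)_h$, bounds the remainder $(\Delta^{-1}\zeta-\Delta_h^{-1}\zeta,\zeta)$ by $Ch\|\zeta\|_{L^2}^2$ via Lemma \ref{lem20171106_2}, and absorbs it under the condition $Ch\le\frac12$. Your caveat about justifying the choice $w_h=\zeta$ when $\zeta\notin W_h$ (and the finiteness of $\|\nabla\zeta\|_{0,2,h}$) is a legitimate point that the paper glosses over, but it does not alter the argument in the intended application where $\zeta\in S_E^h\subset W_h$.
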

\begin{proof}
Testing \eqref{eq20171106_8} by $\zeta$, and using Lemma \ref{lem20171106_2}, we obtain
\begin{align}\label{eq20180421_1}
\|\zeta\|_{L^2}^2&=(-\nabla\Delta_h^{-1}\zeta,\nabla \zeta)_h+(\Delta^{-1}\zeta-\Delta_h^{-1}\zeta,\zeta)\\
&\le\frac12\|\nabla\Delta_h^{-1}\zeta\|_{L^2}^2+\frac12\|\nabla \zeta\|_{0,2,h}^2+Ch\|\zeta\|_{L^2}\|\zeta\|_{L^2}\notag.
\end{align}
When $Ch\le\frac12$, the lemma is proved.
\end{proof}
\begin{remark}\label{rem20180421_1}
Combine \eqref{eq20180421_1} and Lemma \ref{lem20170914_1}, we can easily prove
\begin{align*}
\|\zeta\|_{L^2}^2&\le\|\nabla\Delta^{-1}\zeta\|_{L^2}^2+\|\nabla \zeta\|_{0,2,h}^2,\\
\|\zeta\|_{L^2}^2&\le \frac{1}{a}\|\nabla\Delta^{-1}\zeta\|_{L^2}^2+Ca\|\nabla \zeta\|_{0,2,h}^2,\notag\\
\|\zeta\|_{L^2}^2&\le \frac{1}{a}\|\nabla\widetilde\Delta^{-1}\zeta\|_{L^2}^2+Ca\|\nabla \zeta\|_{0,2,h}^2.\notag
\end{align*}
\end{remark}

The discrete energy law is proved below.
\begin{theorem}\label{thm20171006_2}
Under the assumption \eqref{eq20180425_5} and the following mesh constraints
\begin{align*}%\label{eq20170807_4}
k&\ge C\frac{h^2}{\epsilon},\\
k&\ge C\frac{h^2}{\epsilon^{4\gamma_1+3}},\\
k&\ge C\epsilon\beta^2h^2,
%k&\le\frac{\epsilon^4}{4},
\end{align*}
the following energy holds
\begin{align*}%\label{eq20170807_5}
&J^h_{\epsilon}(u_h^n)+\frac{k}{8}\sum_{n=1}^{\ell}\|\nabla \Delta^{-1}d_tu_h^n\|_{0,2,h}^2\\
&\qquad+\frac{\epsilon k^2}{8}\sum_{n=1}^{\ell}\|\nabla d_tu_h^n\|_{0,2,h}^2+\frac{k^2}{4\eps}\sum_{n=1}^{\ell}\|d_t(|u_h^{n}|^2-1)\|_{0,2,h}^2\le CJ^h_{\epsilon}(u_h^0),
\end{align*}
where
\begin{align*}
J^h_{\epsilon}(v) = \frac{\epsilon}{2}\|\nabla v\|_{0,2,h}^2+\frac{1}{4\eps} \| v^2-1\|_{0,2,h}^2.
\end{align*}
\end{theorem}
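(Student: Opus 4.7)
The strategy is to test the scheme \eqref{eq20170504_11} with $v_h = -\widetilde{\Delta}_h^{-1} d_t u_h^{n}\in S_E^h$. This is legitimate because constants belong to $S_E^h$ and are annihilated by both $a_h$ and $(\nabla f(u_h^n),\nabla\cdot)_h$, so taking $v_h\equiv 1$ forces $(d_t u_h^n,1)=0$, placing $d_t u_h^n$ in $S_E^h\cap L_0^2(\Omega)$ where $\widetilde{\Delta}_h^{-1}$ acts. I expand the three resulting contributions, sum over $n=1,\ldots,\ell$, telescope the $d_t$ terms into $J_\epsilon^h(u_h^\ell)$, and absorb residual terms under the stated mesh constraints. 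The precise coefficients $\tfrac18,\tfrac18,\tfrac14$ in the statement arise from keeping only a fraction of each dissipative quantity on the LHS and using the rest to dominate the error terms.

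For the time-derivative pairing I split
\begin{align*}
(d_t u_h^n,-\widetilde{\Delta}_h^{-1} d_t u_h^n) = \|\nabla \Delta^{-1} d_t u_h^n\|_{L^2}^2 + (d_t u_h^n,\,\Delta^{-1} d_t u_h^n-\widetilde{\Delta}_h^{-1} d_t u_h^n),
\end{align*}
controlling the remainder by Lemma \ref{lem20170914_1}. For the biharmonic term, the symmetry of $b_h=a_h+\beta(\cdot,\cdot)$ and the defining identity \eqref{eq3.2b} yield
\begin{align*}
\epsilon a_h(u_h^n,-\widetilde{\Delta}_h^{-1} d_t u_h^n) = \epsilon(\nabla d_t u_h^n,\nabla u_h^n)_h - \epsilon\beta(\Delta^{-1} d_t u_h^n-\widetilde{\Delta}_h^{-1} d_t u_h^n,\,u_h^n),
\end{align*}
and the standard identity $(\nabla d_t u_h^n,\nabla u_h^n)_h=\tfrac{1}{2}d_t\|\nabla u_h^n\|_{0,2,h}^2+\tfrac{k}{2}\|\nabla d_t u_h^n\|_{0,2,h}^2$ then supplies, after summation, both the $\tfrac{\epsilon}{2}\|\nabla u_h^\ell\|_{0,2,h}^2$ piece of $J^h_\epsilon$ and the $\tfrac{\epsilon k^2}{2}\sum\|\nabla d_t u_h^n\|^2$ surplus. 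The $\beta$-remainder is disposed of by Young's inequality under $k\ge C\epsilon\beta^2 h^2$.

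The nonlinear term is the principal obstacle. I bridge via the continuous inverse Laplacian,
\begin{align*}
\tfrac{1}{\epsilon}(\nabla f(u_h^n),\nabla(-\widetilde{\Delta}_h^{-1} d_t u_h^n))_h = \tfrac{1}{\epsilon}(\nabla f(u_h^n),\nabla(-\Delta^{-1} d_t u_h^n)) + \mathcal{R}_1,
\end{align*}
where $\mathcal{R}_1$ is controlled by Lemmas \ref{lem20170604_1_add}--\ref{lem20171106_2} combined with the $L^\infty$ bound \eqref{eq20180425_5}. Piecewise integration by parts in the principal part, using $-\Delta(-\Delta^{-1} d_t u_h^n)=d_t u_h^n$, the Neumann datum on $\partial\Omega$, and the Morley vertex-continuity (which makes each interior edge-jump $[f(u_h^n)]_e$ vanish at both endpoint vertices), reduces it to $\tfrac{1}{\epsilon}(f(u_h^n),d_t u_h^n)$ modulo an $O(h)$ edge-jump remainder. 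The algebraic identity
\begin{align*}
k\,f(u_h^n)\,d_t u_h^n = [F(u_h^n)-F(u_h^{n-1})] + \tfrac{k^2}{4}\bigl(d_t(|u_h^n|^2)\bigr)^2 + \tfrac{k^2}{2}\bigl((u_h^n)^2-1\bigr)(d_t u_h^n)^2,
\end{align*}
verified directly from $f=F'$ with $F(u)=\tfrac{1}{4}(u^2-1)^2$, then delivers $d_t\bigl(\tfrac{1}{4\epsilon}\|(u_h^n)^2-1\|_{L^2}^2\bigr)$, the targeted dissipation $\tfrac{k}{4\epsilon}\|d_t(|u_h^n|^2-1)\|_{L^2}^2$, and the sign-indefinite remainder $\mathcal{R}_2:=\tfrac{k}{2\epsilon}\int_\Omega((u_h^n)^2-1)(d_t u_h^n)^2\,dx$.

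To close, \eqref{eq20180425_5} gives $|\mathcal{R}_2|\le \tfrac{Ck}{\epsilon^{1+2\gamma_1}}\|d_t u_h^n\|_{L^2}^2$, and Remark \ref{rem20180421_1} provides $\|d_t u_h^n\|_{L^2}^2\le \tfrac{1}{a}\|\nabla\Delta^{-1} d_t u_h^n\|_{L^2}^2+Ca\|\nabla d_t u_h^n\|_{0,2,h}^2$; balancing the two resulting pieces against $\tfrac{k}{8}\|\nabla\Delta^{-1} d_t u_h^n\|^2$ and $\tfrac{\epsilon k^2}{8}\|\nabla d_t u_h^n\|^2$ already present on the LHS forces precisely the restriction $k\ge Ch^2/\epsilon^{4\gamma_1+3}$, while $\mathcal{R}_1$, the $\beta$-remainder, and the edge-jump remainder absorb under $k\ge Ch^2/\epsilon$ and $k\ge C\epsilon\beta^2 h^2$. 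Summation in $n$ and telescoping of the $d_t$ terms then produce the claimed bound. The main technical difficulty is the nonlinear step together with its balanced absorption: one must simultaneously bridge the nonconforming $\widetilde{\Delta}_h^{-1}$ to $\Delta^{-1}$ so that piecewise IBP becomes admissible, dispose of the Morley edge jumps via their vanishing at vertices, and keep the sign-indefinite $\mathcal{R}_2$ (which blows up like $\epsilon^{-(1+2\gamma_1)}$) dominated by genuinely dissipative quantities --- this triple balancing is exactly what selects the three mesh constraints.
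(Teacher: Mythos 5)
Your skeleton matches the paper's proof: the same test function (the paper uses $-\widetilde\Delta_h^{-1}(u_h^n-u_h^{n-1})$, i.e.\ your choice scaled by $k$), the same identity for $\epsilon a_h(u_h^n,-\widetilde\Delta_h^{-1}\zeta)$ via \eqref{eq3.2b}, a correct algebraic identity for $k f(u_h^n)d_tu_h^n$ (yours is in fact sharper than the paper's \eqref{eq3.18}, which discards the sign-indefinite term using $(u_h^n)^2-1\ge -1$), and the same telescoping-plus-absorption-plus-Gronwall finish; your zero-mean observation for $d_tu_h^n$ is a nice point the paper leaves implicit. The genuine gap is in your reduction of the nonlinear term. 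The paper never integrates by parts there: it bridges $\widetilde\Delta_h^{-1}\to\Delta^{-1}\to\Delta_h^{-1}$ and then invokes the defining equation \eqref{eq20171106_8} of $\Delta_h^{-1}$ with test function $w_h=f(u_h^n)$, which is admissible precisely because $W_h$ was built from piecewise polynomials of degree $\le 6$ and $f(u_h^n)=(u_h^n)^3-u_h^n$ is piecewise of degree $6$. This turns $\frac1\epsilon(\nabla f(u_h^n),-\nabla\Delta_h^{-1}d_tu_h^n)_h$ into $\frac1\epsilon(f(u_h^n),d_tu_h^n)$ plus remainders controlled by Lemma \ref{lem20171106_2} using only $\|f(u_h^n)\|_{0,2,h}$, $\|\nabla f(u_h^n)\|_{0,2,h}$, $\|d_tu_h^n\|_{0,2,h}$ and $\|d_tu_h^n\|_{1,2,h}$. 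Your elementwise integration by parts instead produces the edge terms $\sum_e\int_e[f(u_h^n)]_e\,\partial_n(\Delta^{-1}d_tu_h^n)\,dS$, and vertex-continuity alone does not make these $O(h)$: endpoint-vanishing of the jump buys $\|[f(u_h^n)]\|_{0,e}\le Ch^2|[f(u_h^n)]|_{2,e}$, so the $h$-gain costs second tangential derivatives, i.e.\ a bound through $|f(u_h^n)|_{2,2,h}$ and hence $\|u_h^n\|_{2,2,h}$ (this is exactly the Elliott--French Lemma-2.6 type estimate the paper uses for $T_1$ in Theorem \ref{thm20171006_3}); lowering the order by an inverse inequality cancels the gain and leaves an $O(1)$ consistency error. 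But the $\|\cdot\|_{2,2,h}$ stability of $u_h^n$ is Theorem \ref{thm20171006_3}, which is itself \emph{derived from} the present energy law, so invoking it here is circular. Avoiding this trap is the very reason the paper introduces $W_h$ and $\Delta_h^{-1}$.

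A second, smaller error is your constraint arithmetic for $\mathcal{R}_2$. From $|\mathcal{R}_2|\le \frac{Ck}{\epsilon^{1+2\gamma_1}}\|d_tu_h^n\|_{L^2}^2$ and Remark \ref{rem20180421_1}, absorbing the two pieces into $\frac{k}{8}\|\nabla\Delta^{-1}d_tu_h^n\|_{0,2,h}^2$ and $\frac{\epsilon k^2}{8}\|\nabla d_tu_h^n\|_{0,2,h}^2$ requires a parameter $a$ with $a\ge Ck\epsilon^{-1-2\gamma_1}$ and $a\le C\epsilon^{2+2\gamma_1}$ simultaneously, i.e.\ $k\le C\epsilon^{4\gamma_1+3}$ --- an upper bound on $k$ containing no $h$ at all, not the stated lower bound $k\ge Ch^2\epsilon^{-4\gamma_1-3}$. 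In the paper that lower bound has a different source: the $h^2$-weighted operator-difference remainders of the nonlinear term (e.g.\ $\frac{Ckh^2}{\epsilon^{4\gamma_1+2}}\|\nabla d_tu_h^n\|_{0,2,h}^2$ in $M_3$) are absorbed into the surplus dissipation $\frac{\epsilon k^2}{8}\|\nabla d_tu_h^n\|_{0,2,h}^2$ generated by $M_2$, and $k\gtrsim h^2\epsilon^{-4\gamma_1-3}$ is exactly what makes that absorption work. Relatedly, your Term-1 remainder $(d_tu_h^n,\Delta^{-1}d_tu_h^n-\widetilde\Delta_h^{-1}d_tu_h^n)$, estimated by Lemma \ref{lem20170914_1} and Young, carries weight $h$ rather than $h^2$, which would force $k\ge Ch/\epsilon$ instead of the stated $k\ge Ch^2/\epsilon$; the paper obtains $h^2$ weights by routing through $\Delta_h^{-1}$ and pairing the operator differences against gradients that are themselves retained on the left-hand side.
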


\begin{proof}
Taking $v_h=-\widetilde\Delta_h^{-1}(u_h^{n}-u_h^{n-1})$ as the test function in \eqref{eq20170504_11}, then we have
\begin{align}\label{eq20170729_11}
&(d_tu_h^{n},-\widetilde\Delta_h^{-1}(u_h^{n}-u_h^{n-1}))+\epsilon a_h(u_h^{n},-\widetilde\Delta_h^{-1}(u_h^{n}-u_h^{n-1}))\\
&\qquad+\frac{1}{\epsilon}(\nabla f(u_h^{n}),-\nabla\widetilde\Delta_h^{-1}(u_h^{n}-u_h^{n-1}))_h=0.\notag
\end{align}

The first term on the left-hand side of \eqref{eq20170729_11} can be written as
\begin{align}\label{eq20170730_1}
M_1&=k(\nabla \Delta^{-1}d_tu_h^n,\nabla \Delta^{-1}d_tu_h^n)_h+k(\nabla \Delta_h^{-1}d_tu_h^n-\nabla \Delta^{-1}d_tu_h^n,\nabla \Delta^{-1}d_tu_h^n)_h\\
&\quad+k(\nabla \Delta_h^{-1}d_tu_h^n,\nabla \widetilde\Delta_h^{-1}d_tu_h^n-\nabla \Delta^{-1}d_tu_h^n)_h\notag\\
&\quad+k(\widetilde\Delta_h^{-1}d_tu_h^n,\Delta_h^{-1}d_tu_h^n- \Delta^{-1}d_tu_h^n)\notag\\
&\ge k\|\nabla \Delta^{-1}d_tu_h^n\|_{0,2,h}^2-\bigl[\frac{k}{8}\|\nabla \Delta^{-1}d_tu_h^n\|_{0,2,h}^2+Ckh^2\|d_tu_h^n\|_{0,2,h}\bigr]\notag\\
&\quad-\bigl[\frac{k}{8}\|\nabla\Delta^{-1}d_tu_h^n\|_{0,2,h}^2+Ckh^2\|d_tu_h^n\|_{0,2,h}^2+Ckh^2\|d_tu_h^n\|_{1,2,h}^2\bigl]\notag\\
&\quad-\bigl[\frac{k}{8}\|\nabla\Delta^{-1}d_tu_h^n\|_{0,2,h}^2+Ckh^2\|d_tu_h^n\|_{0,2,h}^2+Ckh^2\|d_tu_h^n\|_{1,2,h}^2\bigl]\notag\\
&\ge\frac{k}{2}\|\nabla \Delta^{-1}d_tu_h^n\|_{0,2,h}^2-Ckh^2\|\nabla d_tu_h^n\|_{0,2,h}^2\notag,
\end{align}
where Remark \ref{rem20180421_1} is used in the last inequality.

The second term on the left-hand side of \eqref{eq20170729_11} can be written as
\begin{align}\label{eq20170730_2}
M_2 &= \epsilon(\nabla u_h^n,\nabla (u_h^n-u_h^{n-1}))_h+\epsilon\beta k(u_h^n,\Delta^{-1}d_tu_h^n-\widetilde\Delta_h^{-1}d_tu_h^n)\\
&\ge\frac{\epsilon}{2}\|\nabla u_h^n\|_{0,2,h}^2-\frac{\epsilon}{2}\|\nabla u_h^{n-1}\|_{0,2,h}^2+\frac{\epsilon k^2}{2}\|\nabla d_tu_h^n\|_{0,2,h}^2\notag\\
&\qquad-C\epsilon k\|\nabla u_h^n\|_{0,2,h}^2-C\epsilon\beta^2kh^2\|d_tu_h^n\|_{1,2,h}^2\notag\\
&\ge\frac{\epsilon}{2}\|\nabla u_h^n\|_{0,2,h}^2-\frac{\epsilon}{2}\|\nabla u_h^{n-1}\|_{0,2,h}^2-C\epsilon k\|\nabla u_h^n\|_{0,2,h}^2\notag\\%-\frac{k^2}{8}\sum_{i=1}^n\|\nabla\Delta^{-1}d_tu^n\|_{L^2}^2\notag\\
&\qquad+\frac{3\epsilon k^2}{8}\|\nabla d_tu_h^n\|_{0,2,h}^2-\frac{k}{8}\|\nabla \Delta^{-1}d_tu_h^n\|_{0,2,h}^2\notag,
\end{align}
where the last inequality hold under the restriction $k\ge C\beta^2h^2$.

%We consider the case when the discrete bilinear form $a_h(\cdot,\cdot)$ defined in \eqref{eq20170504_9} is defined below
%\begin{align}\label{eq20170807_1}
%a_h(u,v)&=\sum_{K\in\mathcal{T}_h}\int_K\Delta u\Delta v\ dxdy.
%\end{align}

We now bound the third term on the left-hand side of \eqref{eq20170729_11} from below.  
We consider the case $f^{n}=(u_h^{n})^3 - u_h^{n}$, and it can be written as 
\begin{align*}
f^{n} %&=(u_h^{m+1})^3 - u_h^m \\
&=u_h^{n}\bigl(|u_h^{n}|^2-1\bigr)\\
&=\frac12 \bigl( (u_h^{n}+u_h^{n-1})+kd_t u_h^{n} \bigr)
\bigl(|u_h^{n}|^2-1\bigr).
\end{align*}
A direct calculation then yields \cite{feng2014analysis}
\begin{align}\label{eq3.18}
\frac{1}{\eps}\bigl(f^{n},d_t u_h^{n}\bigr)_{h}
&\geq \frac{1}{4\eps} d_t\| |u_h^{n}|^2-1\|_{0,2,h}^2 \\
&\quad
+\frac{k}{4\eps}\|d_t(|u_h^{n}|^2-1)\|_{0,2,h}^2
-\frac{k}{2\eps}\|d_t u_h^{n}\|_{0,2,h}^2. \nonumber
\end{align}

%Then the third term on the left-hand side of \eqref{eq20170729_11} can be written as
%\begin{align}\label{eq20170730_3}
%M_3&=\frac{1}{\epsilon}(\nabla f(u_h^{n}),-\nabla\widetilde\Delta_h^{-1}d_tu_h^{n})_h\\
%&= \frac{1}{\epsilon}a_h(\widetilde\Delta_h^{-1}f(u_h^{n}),\widetilde\Delta_h^{-1}d_tu_h^{n})\notag\\
%&=\frac{1}{\epsilon}a_h(\Delta^{-1}f(u_h^{n}),\Delta^{-1}d_tu_h^{n})+\frac{1}{\epsilon}a_h((\widetilde\Delta_h^{-1}-\Delta^{-1})f(u_h^{n}),\Delta^{-1}d_tu_h^{n})\notag\\
%&\qquad+\frac{1}{\epsilon}a_h(\widetilde\Delta_h^{-1}f(u_h^{n}),(\widetilde\Delta_h^{-1}-\Delta^{-1})d_tu_h^{n})\notag\\
%&=\frac{1}{\epsilon}(f(u_h^{n}),d_tu_h^{n})+\frac{1}{\epsilon}(\Delta(\widetilde\Delta_h^{-1}-\Delta^{-1})f(u_h^{n}),d_tu_h^{n})\notag\\
%&\qquad+\frac{1}{\epsilon}(\Delta\widetilde\Delta_h^{-1}f(u_h^{n}),\Delta(\widetilde\Delta_h^{-1}-\Delta^{-1})d_tu_h^{n})\notag\\
%&\ge\frac{1}{4\eps^2} d_t\| |u_h^{m+1}|^2-1\|_{L^2(\cT_h)}^2+\frac{k}{4\eps^2}\|d_t(|u_h^{m+1}|^2-1)\|_{L^2(\cT_h)}^2
%-\frac{k}{2\eps^2}\|d_t u_h^{m+1}\|_{L^2(\cT_h)}^2\nonumber\\
%&\quad-h^3(\ln(\frac{1}{h}))^2\|\nabla u_h^n\|_{0,2,h}^2-C\frac{k1}{2\epsilon^4}\|\nabla d_tu_h^n\|_{0,2,h}^2-\frac{h^2}{k}\|\nabla\Delta d_tu_h^{n}\|_{0,2,h}-C\frac{1}{\epsilon^2}h^3\|\nabla d_tu_h^{n}\|_{0,2,h},\notag
%\end{align}
%where the inverse inequality below is used in the last step:
%\begin{align}\label{eq20170730_4}
%\|\xi\|_{0,\infty,h}\le C(\ln(\frac{1}{h}))^{\frac12}\|\xi\|_{1,2,h}
%\end{align}

%Note the first inequality uses Lemma \ref{lem20170807_1}.

The third term on the left-hand side of \eqref{eq20170729_11} can be written as
\begin{align}\label{aaa_eq20170730_3}
M_3&=\frac{k}{\epsilon}(\nabla f(u_h^{n}),-\nabla\Delta_h^{-1}d_tu_h^{n})_h+\frac{k}{\epsilon}(\nabla f(u_h^{n}),\nabla\Delta_h^{-1}d_tu_h^{n}-\nabla\Delta^{-1}d_tu_h^{n})_h\\
&\qquad+\frac{k}{\epsilon}(\nabla f(u_h^{n}),\nabla\Delta^{-1}d_tu_h^{n}-\nabla\widetilde\Delta_h^{-1}d_tu_h^{n})_h\notag\\
&\ge \frac{k}{\epsilon}(f(u_h^{n}),d_tu_h^{n})_h+\frac{k}{\epsilon}(\Delta^{-1}d_tu_h^{n}-\Delta_h^{-1}d_tu_h^{n},f(u_h^{n}))\notag\\
&\qquad- C\epsilon^{4\gamma_1}k\|\nabla f(u_h^n)\|_{0,2,h}^2-\frac{C}{\epsilon^{4\gamma_1+2}}kh^2\|d_tu_h^{n}\|_{0,2,h}^2\notag\\
&\qquad-C\epsilon^{4\gamma_1}k\|\nabla f(u_h^{n})\|_{0,2,h}^2-\frac{C}{\epsilon^{4\gamma_1+2}}kh^2\|d_tu_h^{n}\|_{1,2,h}^2\notag\\
&\ge \frac{k}{\epsilon}(f(u_h^{n}),d_tu_h^{n})_h-C\epsilon^{2\gamma_1-1}k\|f(u_h^{n})\|_{0,2,h}^2-\frac{C}{\epsilon^{2\gamma_1+1}}kh^2\|d_tu_h^{n}\|_{0,2,h}^2\notag\\
&\qquad-\frac{C}{\epsilon^{4\gamma_1+2}}kh^2\|d_tu_h^{n}\|_{0,2,h}^2-C\epsilon^{4\gamma_1}k\|\nabla f(u_h^{n})\|_{0,2,h}^2\notag\\
&\qquad-\frac{C}{\epsilon^{4\gamma_1+2}}kh^2\|\nabla d_tu_h^{n}\|_{0,2,h}^2-\frac{k}{16}\|\nabla \Delta^{-1}d_tu_h^n\|_{0,2,h}^2\notag\\
&\ge\bigl[\frac{k}{4\eps} d_t\| |u_h^{n}|^2-1\|_{0,2,h}^2+\frac{k^2}{4\eps}\|d_t(|u_h^{n}|^2-1)\|_{0,2,h}^2\bigl]-C\frac{k}{\epsilon}\|(u_h^n)^2-1\|_{0,2,h}^2\nonumber\\
&\quad-\bigl[\frac{k}{8}\|\nabla \Delta^{-1}d_tu_h^n\|_{0,2,h}^2+\frac{Ckh^2}{\epsilon^{4\gamma_1+2}}\|\nabla d_tu_h^{n}\|_{0,2,h}^2\bigl]-C\epsilon k\|\nabla u_h^n\|_{0,2,h}^2.\notag
\end{align}
%where the last term in the second last inequality disappears under the constraint $h^2\le C\epsilon^{4\gamma_1+2}$.

Taking the summation over $n$ from $1$ to $\ell$, and restricting $k$ by letting $k\ge C\frac{h^2}{\epsilon^{4\gamma_1+3}}$, then the energy law can be obtained by the Gronwall's inequality.
\end{proof}

\begin{remark}\label{rem20171103_1}
\begin{enumerate}
\item The idea of proving this discrete energy law is to control bad terms in $M_1$ by terms $M_2$, which is different from the conforming Galerkin case \cite{Feng_Prohl04,Feng_Prohl05} and the discontinuous Galerkin case \cite{feng2016analysis}. This is one reason why there are some restrictions in this theorem.

%\item The equivalence between $\|d_tu_h^n\|_{0,2,h}$ and $\|d_tu_h^n\|_{1,2,h}$ is used in \eqref{eq20170730_1} and \eqref{aaa_eq20170730_3}, we can also get the same result by interpolating $\|d_tu_h^n\|_{0,2,h}$ between $\|\nabla \Delta^{-1}d_tu_h^n\|_{0,2,h}$ and $\|d_tu_h^n\|_{1,2,h}$.

\item The constant $C$ in the energy law can be chosen to approach 1 by restricting $k$ as the polynomial of $\epsilon$ more stringently. 

%\item If $v_h=-\hat\Delta_h^{-1}(u_h^{n}-u_h^{n-1})$ is chosen as the test function in \eqref{eq20170504_11}, the energy law can not be obtained in this way due to other terms in the definition of $-\hat\Delta_h^{-1}$.

%\item A try to avoid the assumption \eqref{eq20171001_2} is to change $W_h$ into $S_E^h$ in \eqref{eq20171106_8}, and to choose $\Delta_h^{-1}(u_h^{n}-u_h^{n-1})$ as the test function in \eqref{eq20170729_11}. However, the error may diverge for the Poisson equation using the Morley elements \cite{bazeley1966triangular, wang2001necessity}, then the discrete energy law can't be proved using $\Delta_h^{-1}(u_h^{n}-u_h^{n-1})$ as the test function.
\end{enumerate}
\end{remark}

A lemma about summation by parts below is needed in this section.
\begin{lemma}\label{lem20180515_1}
Suppose $\{a_n\}_{n=0}^\ell$ and $\{b_n\}_{n=0}^\ell$ are two sequences, then
\begin{equation*}
\sum_{n=1}^\ell(a^n-a^{n-1},b^n)=(a^\ell,b^\ell)-(a^0,b^0)-\sum_{n=1}^\ell(a^{n-1},b^n-b^{n-1}).
\end{equation*}
\end{lemma}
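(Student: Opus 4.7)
\textbf{Proof proposal for Lemma~\ref{lem20180515_1}.} The statement is a standard Abel-type summation-by-parts identity, and I would prove it by direct algebraic manipulation rather than induction, since induction would obscure what is really going on. The plan is to split the bilinearity of $(\cdot,\cdot)$, reindex one of the resulting telescoping pieces, and then recognize the desired right-hand side.

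First I would write
\begin{equation*}
\sum_{n=1}^{\ell}(a^n-a^{n-1},b^n)
= \sum_{n=1}^{\ell}(a^n,b^n) - \sum_{n=1}^{\ell}(a^{n-1},b^n).
\end{equation*}
In the second sum I would shift the index $n \mapsto n+1$, obtaining $\sum_{n=0}^{\ell-1}(a^n,b^{n+1})$, and then pull off the $n=0$ term so that both sums run over the same index range. Regrouping gives
\begin{equation*}
\sum_{n=1}^{\ell}(a^n,b^n) - \sum_{n=0}^{\ell-1}(a^n,b^{n+1})
= (a^\ell,b^\ell) - (a^0,b^1) + \sum_{n=1}^{\ell-1}(a^n,b^n-b^{n+1}).
\end{equation*}

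Finally I would rewrite the remaining sum by reindexing $n \mapsto n-1$, turning $(a^n,b^n-b^{n+1})$ for $n = 1,\dots,\ell-1$ into $-(a^{n-1},b^n-b^{n-1})$ for $n=2,\dots,\ell$, and absorb the boundary correction by noting the identity $-(a^0,b^1) + (a^0,b^0) + (a^0,b^1-b^0) = 0$, which lets me extend the sum down to $n=1$ at the cost of replacing $(a^0,b^1)$ by $(a^0,b^0)$. The result is exactly
\begin{equation*}
(a^\ell,b^\ell) - (a^0,b^0) - \sum_{n=1}^{\ell}(a^{n-1},b^n - b^{n-1}),
\end{equation*}
which is the claim. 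There is no real obstacle here; the only bookkeeping subtlety is making sure that the index shift handles the endpoints $n=0$ and $n=\ell$ consistently, and that the sign in front of the final sum is correct. The identity holds for any bilinear pairing $(\cdot,\cdot)$, so it applies unchanged to the $L^2$ inner product used throughout the paper.
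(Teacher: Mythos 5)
Your proof is correct and uses essentially the same argument as the paper: bilinearity of the pairing, an index shift, and bookkeeping of the telescoped boundary terms. The paper merely organizes it in the opposite direction (it expands the sum $\sum_{n=1}^\ell(a^{n-1},b^n-b^{n-1})$ on the right-hand side and rearranges), so the two proofs are interchangeable.
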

\begin{proof} 
The lemma can be easily obtained by using the equality below
\begin{equation*}
\sum_{n=1}^\ell(a^{n-1},b^n-b^{n-1}) = \sum_{n=1}^\ell(a^{n-1},b^n)-\sum_{n=1}^\ell(a^{n},b^{n})+(a^\ell,b^\ell)-(a^0,b^0).
\end{equation*}
\end{proof}

Next we prove the $\|u_h^n\|_{2,2,h}$ stability results for the cases when $L^2$ in time (Theorem \ref{thm20180517_1}) and $L^{\infty}$ in time (Theorem \ref{thm20171006_3}) are considered, which will be used in proving the generalized coercivity result in the Morley element space.
\begin{theorem}\label{thm20180517_1}
Under the mesh constraints in Theorem \ref{thm20171006_2}, the following stability result holds
\begin{align*}
\frac12\|u_h^{\ell}\|_{0,2,h}^2+\frac{k}{2}\|d_tu_h^{n}\|_{0,2,h}^2+\epsilon k\sum_{n=1}^{\ell}\|u_h^{n}\|_{2,2,h}^2+\frac{3k}{\epsilon}\sum_{n=1}^{\ell}\|u_h^{n}\nabla u_h^{n}\|_{0,2,h}^2\le C\epsilon^{-2\sigma_1-2}\notag,
\end{align*}
where $C$ is also the $\epsilon$-independent constant.
\end{theorem}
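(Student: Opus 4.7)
The plan is to test the scheme \eqref{eq20170504_11} with the direct choice $v_h = u_h^n$. This is a deliberate departure from Theorem \ref{thm20171006_2}, where the test function $-\widetilde\Delta_h^{-1}(u_h^n-u_h^{n-1})$ was used to produce a discrete $H^{-1}$-type energy law; here $v_h = u_h^n$ generates a stronger $\|\cdot\|_{2,2,h}$-type control that will feed into later spectrum/coercivity arguments. Each of the three terms in \eqref{eq20170504_11} can then be handled by a standard device: the time derivative by the identity $2(a-b)a = a^2 - b^2 + (a-b)^2$, producing $\frac{1}{2k}(\|u_h^n\|_{0,2,h}^2 - \|u_h^{n-1}\|_{0,2,h}^2) + \frac{k}{2}\|d_tu_h^n\|_{0,2,h}^2$; the bilinear form by the identity stated just after \eqref{eq20170504_8}, giving $\frac{\epsilon}{2}(\|\Delta u_h^n\|_{0,2,h}^2 + |u_h^n|_{2,2,h}^2)$; and the nonlinear term by expanding $\nabla f(u_h^n) = (3(u_h^n)^2 - 1)\nabla u_h^n$ to produce
\begin{equation*}
\frac{1}{\epsilon}(\nabla f(u_h^n),\nabla u_h^n)_h = \frac{3}{\epsilon}\|u_h^n\nabla u_h^n\|_{0,2,h}^2 - \frac{1}{\epsilon}\|\nabla u_h^n\|_{0,2,h}^2.
\end{equation*}

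Next I would multiply through by $k$, sum from $n=1$ to $\ell$, and carry the negative linear piece $\frac{k}{\epsilon}\sum_{n=1}^\ell \|\nabla u_h^n\|_{0,2,h}^2$ over to the right-hand side. All the targeted quantities in the conclusion (the $L^2$ norm at the final step, the squared discrete time derivative, the $|u_h^n|_{2,2,h}^2$ piece of $\|u_h^n\|_{2,2,h}^2$, and the weighted $\|u_h^n\nabla u_h^n\|^2$ term) now sit on the left with good constants. The right-hand side contains only $\frac{1}{2}\|u_h^0\|_{0,2,h}^2$ and the single ``bad'' term $\frac{k}{\epsilon}\sum \|\nabla u_h^n\|_{0,2,h}^2$.

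The crux is to bound this bad term by the discrete energy law already obtained in Theorem \ref{thm20171006_2}. Under assumption (GA)(2) one has $J_\epsilon^h(u_h^0) \le C\epsilon^{-2\sigma_1}$, and Theorem \ref{thm20171006_2} then yields $\frac{\epsilon}{2}\|\nabla u_h^n\|_{0,2,h}^2 \le C\epsilon^{-2\sigma_1}$ uniformly in $n$, i.e.\ $\|\nabla u_h^n\|_{0,2,h}^2 \le C\epsilon^{-2\sigma_1-1}$. Multiplying by $\frac{k}{\epsilon}$ and summing up to time $T=\ell k$ gives $\frac{k}{\epsilon}\sum_{n=1}^\ell \|\nabla u_h^n\|_{0,2,h}^2 \le CT\epsilon^{-2\sigma_1-2}$, which is precisely the stated polynomial dependence on $\epsilon^{-1}$.

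The last step is to upgrade the $|u_h^n|_{2,2,h}^2 + \|\Delta u_h^n\|_{0,2,h}^2$ information on the left into the full norm $\|u_h^n\|_{2,2,h}^2 = \|u_h^n\|_{L^2}^2 + \|\nabla u_h^n\|_{0,2,h}^2 + |u_h^n|_{2,2,h}^2$. For this I would invoke discrete mass conservation (testing \eqref{eq20170504_11} against $v_h = 1 \in S_E^h$ gives $(u_h^n,1) = (u_h^0,1)$ since $a_h(\cdot,1) = 0$ and $\nabla 1 = 0$) together with a broken Poincaré inequality to bound $\|u_h^n\|_{L^2}^2 \le C\|\nabla u_h^n\|_{0,2,h}^2 + C$, and then absorb $\epsilon k \sum_n(\|u_h^n\|_{L^2}^2 + \|\nabla u_h^n\|_{0,2,h}^2) \le CT\epsilon^{-2\sigma_1}$, which is dominated by the $C\epsilon^{-2\sigma_1-2}$ bound.

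The only real obstacle is the interplay between the nonlinear term $(\nabla f(u_h^n),\nabla u_h^n)_h$ and the linear subtracted piece $-\frac{1}{\epsilon}\|\nabla u_h^n\|_{0,2,h}^2$: one must resist the temptation to absorb it into the $\epsilon\|u_h^n\|_{2,2,h}^2$ coercivity term (which would cost a factor $\epsilon^{-2}$ via a discrete interpolation inequality and spoil the polynomial dependence); instead one must recognize that the prior energy estimate already controls $\|\nabla u_h^n\|_{0,2,h}^2$ uniformly in $n$ at the sharp rate $\epsilon^{-2\sigma_1-1}$, so summation only pays an extra $T/\epsilon$ factor. No Gronwall inequality and no interplay with the four discrete inverse Laplace operators are needed for this result.
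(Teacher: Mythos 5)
Your proposal is correct and follows essentially the same route as the paper's proof: test \eqref{eq20170504_11} with $v_h=u_h^n$, use the standard identities for the time-difference term, the bilinear form $a_h(\cdot,\cdot)$, and the expansion $\nabla f(u_h^n)=(3(u_h^n)^2-1)\nabla u_h^n$, then sum in $n$, multiply by $k$, and absorb the single negative term $\frac{k}{\epsilon}\sum_n\|\nabla u_h^n\|_{0,2,h}^2$ via the discrete energy law of Theorem \ref{thm20171006_2}. Your write-up is in fact more explicit than the paper's (which compresses the last step into ``using Theorem \ref{thm20171006_2}, we obtain the conclusion''), and your added remarks on mass conservation and the broken Poincar\'e inequality for the lower-order parts of $\|u_h^n\|_{2,2,h}$ are consistent with what the paper leaves implicit.
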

\begin{proof}
Taking $v_h=u_h^{n}$ as the test function in \eqref{eq20170504_11}, then
\begin{align}\label{eq20180517_3}
(d_tu_h^{n},u_h^{n})+\epsilon a_h(u_h^{n},u_h^{n})+\frac{1}{\epsilon}(\nabla f(u_h^{n}),\nabla u_h^{n})_h=0.
\end{align}

The first term on the left-hand side of \eqref{eq20180517_3} can be written as
\begin{align}\label{eq20180517_4}
(d_tu_h^{n},u_h^{n})=\frac{1}{2k}\|u_h^{n}\|_{0,2,h}^2-\frac{1}{2k}\|u_h^{n-1}\|_{0,2,h}^2+\frac{1}{2k}\|u_h^{n}-u_h^{n-1}\|_{0,2,h}^2.
\end{align}

The third term on the left-hand side of \eqref{eq20180517_3} can be written as
\begin{align}\label{eq20180517_5}
\frac{1}{\epsilon}(\nabla f(u_h^{n}),\nabla u_h^{n})_h&=\frac{1}{\epsilon}((3(u_h^{n})^2-1)\nabla u_h^{n},\nabla u_h^{n})_h\\
&=\frac{3}{\epsilon}\|u_h^{n}\nabla u_h^{n}\|_{0,2,h}^2-\frac{1}{\epsilon}\|\nabla u_h^{n}\|_{0,2,h}^2.\notag
\end{align}

Taking the summation over $n$ from $1$ to $\ell$ on both sides of \eqref{eq20180517_3}, multiplying with $k$, and using Theorem \ref{thm20171006_2}, we obtain the conclusion.
\end{proof}

\begin{theorem}\label{thm20171006_3}
Under the mesh constraints in Theorem \ref{thm20171006_2}, and when $k\ge C\frac{h^4}{\epsilon^{4+4\gamma_1+2\sigma_1}}(\ln\,\frac1h)^2$ and $k\ge Ch^2$, the following stability result holds
\begin{align*}
\|u_h^{\ell}\|_{2,2,h}^2+\sum_{n=1}^\ell\|u_h^{n}-u_h^{n-1}\|_{2,2,h}^2+\sum_{n=1}^{\ell}\frac{\|u_h^{n}-u_h^{n-1}\|_{0,2,h}^2}{\epsilon k}
\le C\epsilon^{-2\gamma_2},
\end{align*}
where $\gamma_2:=2\gamma_1+\sigma_1+6$ and $C$ is the $\epsilon$-independent constant.
\end{theorem}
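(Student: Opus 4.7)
The strategy is to test \eqref{eq20170504_11} with $v_h = u_h^n - u_h^{n-1}$, sum over $n=1,\dots,\ell$, and then use summation by parts in time together with integration by parts in space on the cubic term, so that only polynomial (and not exponential) dependence on $\epsilon^{-1}$ appears. Crucially, testing with $u_h^n - u_h^{n-1}$ (rather than with $d_t u_h^n$ or with a discrete inverse Laplace image as in Theorem \ref{thm20171006_2}) produces exactly the three left-hand quantities required by the theorem after dividing by $\epsilon$.

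After inserting the test function and using the polarization identity
\begin{equation*}
a_h(u_h^n, u_h^n - u_h^{n-1}) = \tfrac{1}{2}\bigl[a_h(u_h^n,u_h^n) - a_h(u_h^{n-1},u_h^{n-1}) + a_h(u_h^n - u_h^{n-1},u_h^n - u_h^{n-1})\bigr],
\end{equation*}
summation produces on the left the quantities $\sum \|u_h^n-u_h^{n-1}\|_{0,2,h}^2/k$, $\tfrac{\epsilon}{2}a_h(u_h^\ell,u_h^\ell)$, and $\tfrac{\epsilon}{2}\sum a_h(u_h^n-u_h^{n-1},u_h^n-u_h^{n-1})$. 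Using $a_h(v,v)\geq C\|v\|_{2,2,h}^2$ (immediate from $a_h(v,v) = \tfrac{1}{2}(\|\Delta v\|_{0,2,h}^2+|v|_{2,2,h}^2)$ together with the discrete Poincaré-type bound on $S_E^h$) and dividing by $\epsilon$ identifies these with the three summands of the target inequality. On the right there remain only the starting term $\tfrac{\epsilon}{2}a_h(u_h^0,u_h^0)$, controlled by the elliptic projection error and (GA), and the nonlinear contribution $-\tfrac{1}{\epsilon}\sum (\nabla f(u_h^n),\nabla(u_h^n-u_h^{n-1}))_h$.

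The core of the argument is the nonlinear sum. First I would apply Lemma \ref{lem20180515_1} with $a^n = \nabla u_h^n$ and $b^n = \nabla f(u_h^n)$ to obtain
\begin{equation*}
\sum_{n=1}^{\ell}(\nabla f(u_h^n),\nabla(u_h^n-u_h^{n-1}))_h = (\nabla f(u_h^\ell),\nabla u_h^\ell)_h - (\nabla f(u_h^0),\nabla u_h^0)_h - \sum_{n=1}^{\ell}(\nabla u_h^{n-1},\nabla(f(u_h^n)-f(u_h^{n-1})))_h.
\end{equation*}
Expanding $(\nabla f(v),\nabla v)_h = 3\|v\nabla v\|_{0,2,h}^2 - \|\nabla v\|_{0,2,h}^2$ shows that the first boundary-in-time term has a nonnegative leading piece plus a negative $\|\nabla u_h^\ell\|_{0,2,h}^2$ controlled by the energy law of Theorem \ref{thm20171006_2}, while the second is bounded by the initial energy. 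For the remaining double sum I would integrate by parts in space on each triangle to transfer one derivative onto $u_h^{n-1}$, rewriting it as $-(\Delta u_h^{n-1},f(u_h^n)-f(u_h^{n-1}))_h$ plus interelement jump terms. The volume part is handled by the Lipschitz estimate $\|f(u_h^n)-f(u_h^{n-1})\|_{0,2,h}\leq C\epsilon^{-2\gamma_1}\|u_h^n-u_h^{n-1}\|_{0,2,h}$ coming from \eqref{eq20180425_5}, combined with the $L^2$-in-time $H^2$ bound $\epsilon k\sum\|u_h^n\|_{2,2,h}^2 \leq C\epsilon^{-2\sigma_1-2}$ of Theorem \ref{thm20180517_1}; Young's inequality absorbs $\sum\|u_h^n-u_h^{n-1}\|_{0,2,h}^2/(\epsilon k)$ into the left, leaving a residual of order $\epsilon^{-4\gamma_1-2\sigma_1-\mathrm{const}}$. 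The jump contributions are estimated in the spirit of Lemma \ref{lem20170723_1}, and the stray $h$-factors are offset by the mesh constraints $k\geq Ch^2$ and $k\geq Ch^4\epsilon^{-(4+4\gamma_1+2\sigma_1)}(\ln\tfrac{1}{h})^2$, the logarithm entering through the discrete $L^\infty$--$H^1$ embedding on $S_E^h$ that is needed to bound $f(u_h^n)-f(u_h^{n-1})$ in a stronger norm along the jump integrals.

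The main obstacle is precisely this nonlinear sum. A direct Cauchy--Schwarz bound on $\tfrac{1}{\epsilon}(\nabla f(u_h^n),\nabla(u_h^n-u_h^{n-1}))_h$ followed by discrete Gronwall would yield exponential dependence on $\epsilon^{-1}$; summation by parts in time alone just shifts the difficulty to $\nabla(f(u_h^n)-f(u_h^{n-1}))$, whose naive estimate costs a factor of $h^{-1}$ via an inverse inequality; only the combination, summation by parts in time followed by integration by parts in space, eliminates both issues simultaneously. Once every term has been bounded by polynomial powers of $\epsilon^{-1}$ modulo quantities absorbable on the left, a standard discrete Gronwall argument applied to the linear residual closes the estimate with the stated exponent $2\gamma_2 = 4\gamma_1 + 2\sigma_1 + 12$.
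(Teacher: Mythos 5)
Your proposal is correct and follows essentially the same route as the paper's own proof: test with $v_h=u_h^n-u_h^{n-1}$, polarize $a_h$, and treat the cubic term by combining summation by parts in time (Lemma \ref{lem20180515_1}) with elementwise integration by parts in space, then bound the boundary-in-time terms, the shifted sum against $\Delta u_h^{n-1}$ via the Lipschitz estimate from \eqref{eq20180425_5}, and the interelement edge terms using Theorems \ref{thm20171006_2} and \ref{thm20180517_1} together with the stated mesh constraints (the logarithmic factor entering exactly as you indicate). The only cosmetic difference is the order of the two operations---the paper integrates by parts in space first and then sums by parts in time, whereas you do the reverse---which produces structurally identical terms.
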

\begin{proof}
Taking $v_h=u_h^{n}-u_h^{n-1}$ as the test function in \eqref{eq20170504_11}, then
\begin{align}\label{eq20170814_1}
(d_tu_h^{n},u_h^{n}-u_h^{n-1})+\epsilon a_h(u_h^{n},u_h^{n}-u_h^{n-1})+\frac{1}{\epsilon}(\nabla f(u_h^{n}),\nabla (u_h^{n}-u_h^{n-1}))_h=0.
\end{align}

The first term on the left-hand side of \eqref{eq20170814_1} can be written as
\begin{align}\label{eq20170814_2}
(d_tu_h^{n},u_h^{n}-u_h^{n-1})=\frac1k\|u_h^{n}-u_h^{n-1}\|_{L^2}^2.
\end{align}

The second term on the left-hand side of \eqref{eq20170814_1} can be written as
\begin{align}\label{eq20170814_3}
\epsilon a_h(u_h^{n},u_h^{n}-u_h^{n-1})&=\frac{\epsilon}{2} a_h(u_h^{n},u_h^{n})-\frac{\epsilon}{2}a_h(u_h^{n-1},u_h^{n-1})\\
&\qquad+\frac{\epsilon}{2} a_h(u_h^{n}-u_h^{n-1},u_h^{n}-u_h^{n-1}).\notag
\end{align}

Using summation by parts in Lemma \ref{lem20180515_1} and integration by parts, then the summation of the third term on the left-hand side of \eqref{eq20170814_1} can be written as
\begin{align*}%\label{eq20170814_4}
&-\frac{1}{\epsilon}\sum_{n=1}^\ell(\nabla f(u_h^{n}),\nabla (u_h^{n}-u_h^{n-1}))_h\\
=& -\frac{1}{\epsilon}\sum_{n=1}^\ell\sum_{E\in\mathcal{E}_h}( f(u_h^{n}),\frac{\partial(u_h^{n}-u_h^{n-1})}{\partial n})_E+\frac{1}{\epsilon}\sum_{n=1}^\ell(f(u_h^{n}),\Delta (u_h^{n}-u_h^{n-1}))_h\notag\\
=& -\frac{1}{\epsilon}\sum_{n=1}^\ell\sum_{E\in\mathcal{E}_h}([\![f(u_h^{n})]\!],\bigl\{\frac{\partial(u_h^{n}-u_h^{n-1})}{\partial n}\bigr\})_E-\frac{1}{\epsilon}\sum_{n=1}^\ell\sum_{E\in\mathcal{E}_h}(\{f(u_h^{n})\},\bigl[\!\bigl[\frac{\partial(u_h^{n}-u_h^{n-1})}{\partial n}\bigr]\!\bigr])_E\notag\\
&+\frac{1}{\epsilon}(f(u_h^\ell),\Delta u_h^\ell)_h-\frac{1}{\epsilon}(f(u_h^0),\Delta u_h^0)_h-\frac{1}{\epsilon}\sum_{n=1}^\ell(f(u_h^{n})-f(u_h^{n-1}),\Delta u_h^{n-1})_h\notag\\
:=& T_1+T_2+T_3+T_4+T_5,\notag
\end{align*}
where $[\![\cdot]\!]$ and $\{\cdot\}$ denote the jump and the average along the mesh boundaries.

Using the inverse inequality and Theorem \ref{thm20180517_1}, when $k\ge C\frac{h^4}{\epsilon^{4+4\gamma_1+2\sigma_1}}(\ln\,\frac1h)^2$, we have
\begin{align}\label{eq20180517_7}
T_1&\le\frac{1}{\epsilon}\sum_{n=1}^\ell Ch^2|f(u_h^{n})|_{2,2,h}|u_h^{n}-u_h^{n-1}|_{1,\infty,h}\\
&\le\frac{\epsilon}{8}\sum_{n=1}^\ell|u_h^{n}-u_h^{n-1}|_{2,2,h}^2+C\frac{h^4}{\epsilon^{3}}\ln\,\frac1h\sum_{n=1}^\ell|(3(u_h^{n})^2-1)\Delta u_h^{n}+6u_h^{n}(\nabla u_h^{n})^2|_{0,2,h}^2\notag\\
&\le\frac{\epsilon}{8}\sum_{n=1}^\ell|u_h^{n}-u_h^{n-1}|_{2,2,h}^2+k\sum_{n=1}^\ell|\Delta u_h^{n}|_{0,2,h}^2+Ck\sum_{n=1}^\ell|u_h^{n}|_{2,2,h}^2\notag,
\end{align}
where the first inequality uses the proof of Lemma 2.6 in \cite{elliott1989nonconforming} before applying the inverse inequality.
%\begin{align}\label{eq20180517_7}
%T_1&\le\frac{1}{\epsilon}\sum_{n=1}^\ell Ch|f(u_h^{n})|_{2,2,h}|u_h^{n}-u_h^{n-1}|_{1,2,h}\\
%&\le\frac{1}{8k}\sum_{n=1}^\ell\|u_h^{n}-u_h^{n-1}\|_{L^2}^2+C\frac{k}{\epsilon^2}\sum_{n=1}^\ell|(3(u_h^{n})^2-1)\Delta u_h^{n}+6u_h^{n}\nabla u_h^{n}|_{0,2,h}^2\notag\\
%&\le\frac{1}{8k}\sum_{n=1}^\ell\|u_h^{n}-u_h^{n-1}\|_{L^2}^2+C\epsilon^{-4\gamma_1-2}k\sum_{n=1}^\ell|u_h^{n}|_{2,2,h}^2+C\frac{k}{\epsilon^2}\sum_{n=1}^\ell|u_h^{n}(\nabla u_h^{n})^2|_{0,2,h}^2\notag\\
%&\le\frac{1}{8k}\sum_{n=1}^\ell\|u_h^{n}-u_h^{n-1}\|_{L^2}^2+C\epsilon^{-4\gamma_1-2}k\sum_{n=1}^\ell|u_h^{n}|_{2,2,h}^2+C\epsilon^{-2\gamma_1-2}k\sum_{n=1}^\ell\notag\\
%&\le\frac{1}{8k}\sum_{n=1}^\ell\|u_h^{n}-u_h^{n-1}\|_{L^2}^2+C\epsilon^{-2\sigma_1-4\gamma_1-5}+C\epsilon^{-2\sigma_1-2}\notag.
%\end{align}

When $k\ge Ch^2$, using Theorem \ref{thm20171006_2} and the idea of the proof of Lemma 2.1 in \cite{elliott1989nonconforming}, it holds for each element $K$, then the second term can be bounded by
\begin{align}\label{eq20180517_8}
T_2&\le Ch\sum_{n=1}^\ell|f(u_h^{n})|_{1,2,h}|u_h^{n}-u_h^{n-1}|_{2,2,h}\\
&\le \frac{\epsilon}{4} \sum_{n=1}^\ell a_h(u_h^{n}-u_h^{n-1},u_h^{n}-u_h^{n-1})+C\epsilon^{-4\gamma_1-1}k\sum_{n=1}^\ell|u_h^{n}|_{1,2,h}^2\notag\\
&\le \frac{\epsilon}{4} \sum_{n=1}^\ell a_h(u_h^{n}-u_h^{n-1},u_h^{n}-u_h^{n-1})+C\epsilon^{-4\gamma_1-2\sigma_1-2}\notag.
\end{align}

%\begin{align}\label{eq20170814_4}
%&-\frac{1}{\epsilon}(\nabla f(u_h^{n}),\nabla (u_h^{n}-u_h^{n-1}))_{0,2,h}\\
%=&-\frac{1}{\epsilon}((3(u_h^{n})^2-1)\nabla u_h^{n},\nabla(u_h^{n}-u_h^{n-1}))_{0,2,h}\notag\\
%\le&\frac{1}{\epsilon}\|u_h^{n}\|_{L^{\infty}}^2|(\nabla u_h^{n},\nabla(u_h^{n}-u_h^{n-1}))_{0,2,h}|\notag\\
%\le&\frac{1}{\epsilon}\|u_h^{n}\|_{L^{\infty}}^2|(\nabla u_h^{n},\nabla(u_h^{n}-u_h^{n-1}))_{0,2,h}|\notag.
%\end{align}

By Theorem \ref{thm20180517_1}, the third term and the fourth term can be bounded by
\begin{align}\label{eq20180517_9}
T_3+T_4\le \frac{\epsilon}{4}a_h(u_h^n,u_h^n)+C\epsilon^{-2\sigma_1-4\gamma_1-5}.
\end{align}

Using Theorem \ref{thm20180517_1}, the fifth term can be bounded by
\begin{align}\label{eq20180517_10}
T_5\le&\frac{1}{\epsilon}\sum_{n=1}^\ell(f(u_h^{n})-f(u_h^{n-1}),\Delta u_h^{n-1})_h\\
\le&\frac{1}{8k}\sum_{n=1}^\ell\|u_h^{n}-u_h^{n-1}\|_{L^2}^2+C\epsilon^{-4\gamma_1-2}k\sum_{n=1}^\ell|u_h^{n}|_{2,2,h}^2\notag\\
\le&\frac{1}{8k}\sum_{n=1}^\ell\|u_h^{n}-u_h^{n-1}\|_{L^2}^2+C\epsilon^{-4\gamma_1-2\sigma_1-5}\notag.
\end{align}

Taking the summation over $n$ from $1$ to $\ell$, and combining \eqref{eq20170814_2}--\eqref{eq20180517_10}, we have
\begin{align}%\label{eq20170814_5}
\epsilon\|u_h^{\ell}\|_{2,2,h}^2+\epsilon\sum_{n=1}^\ell\|u_h^{n}-u_h^{n-1}\|_{2,2,h}^2+\sum_{n=1}^{\ell}\frac{\|u_h^{n}-u_h^{n-1}\|_{0,2,h}^2}{k}
\le&C\epsilon^{-4\gamma_1-2\sigma_1-5}\notag.
\end{align}
\end{proof}

\begin{remark}
If $v_h=u_h^{n}$ or $v_h=u_h^{n}-u_h^{n-1}$ are chosen as the test function in \eqref{eq20170504_11}, we can only obtain the $L^2$ and $\|\cdot\|_{2,2,h}$ stability with upper bounds which are exponentially dependent on $\frac{1}{\epsilon}$.
\end{remark}

%\begin{remark}
%If $v_h=u_h^{n}$ is chosen as the test function in \eqref{eq20170504_11}, we can only obtain the $L^2$ stability with an upper bound which is exponentially dependent on $\frac{1}{\epsilon}$.
%\end{remark}

\subsection{The generalized coercivity result in the Morley element space}\label{subsec3_5}
Recall $\widetilde{S}_E^h$ is the Hsieh-Clough-Tocher macro element space. This $C^1$ conforming finite element space $\widetilde{S}_E^h$ is contained in $H^1(\Omega)$ space, so the following discrete spectrum estimate holds automatically.

\begin{lemma}\label{lem20170812_1}
Under the assumptions of Lemma \ref{lem3.4}, there exists an $\eps$-independent and $h$-independent constant $C_0>0$ such that for $\eps\in(0,1)$ and a.e. $t\in [0,T]$
\begin{align*}%\label{eq20170812_7}
\lambda_{CH}^{CONF}:=\mathop{\inf}_{\substack{0\neq\psi\in \widetilde{S}_E^h\\ \Delta w=\psi}}
\limits\frac{\epsilon\|\nabla\psi\|_{L^2}^2+\frac{1}{\epsilon}(f'(u(t))\psi,\psi)}{\|\nabla w\|_{L^2}^2}\geq -C_0
\end{align*}
for $t\in [0,T]$ and $\eps\in (0,\eps_0)$.\\
\end{lemma}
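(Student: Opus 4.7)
The plan is very short: the lemma follows immediately from the containment $\widetilde{S}_E^h \subset H^1(\Omega)$, which is built into the Hsieh-Clough-Tocher macro element space (in fact $\widetilde{S}_E^h \subset H^2_E(\Omega)$, since the space is $C^1$ conforming with the appropriate normal-derivative boundary conditions inherited from $S_E^h$ via the definition of $\widetilde{E}$). So the strategy is to realize the discrete infimum as an infimum taken over a subset of the admissible class used in the continuous spectrum estimate of Lemma \ref{lem3.4}.

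Concretely, I would first fix any nonzero $\psi \in \widetilde{S}_E^h$ and let $w$ be the continuous solution to $-\Delta w = -\psi$ in $\Omega$ with $\partial w/\partial n = 0$ on $\partial\Omega$ (so $w$ is the same function that appears in the continuous Rayleigh quotient; note that since $\psi \in S_E^h$ via the enriching interpolation, it has zero mean, which is the compatibility condition needed for this Neumann problem to be solvable). Then the pair $(\psi,w)$ is admissible in the continuous infimum defining $\lambda_{CH}$, so
\begin{align*}
\frac{\epsilon\|\nabla\psi\|_{L^2}^2+\frac{1}{\epsilon}(f'(u(t))\psi,\psi)}{\|\nabla w\|_{L^2}^2}
\;\ge\; \lambda_{CH} \;\ge\; -C_0,
\end{align*}
where the last inequality is exactly Lemma \ref{lem3.4}.

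Taking the infimum of the left-hand side over all nonzero $\psi\in\widetilde{S}_E^h$ (with the corresponding $w$) yields $\lambda_{CH}^{CONF}\ge -C_0$, which is the desired bound. No discrete spectral machinery, no inverse inequality, and no enriching-operator estimate is needed here, because we are staying entirely on the conforming side where the continuous estimate applies verbatim.

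There is essentially no obstacle in this step; the whole point of first passing to $\widetilde{S}_E^h$ (and then later using the enriching operator $\widetilde{E}$ to transfer information back to $S_E^h$) is precisely to avoid having to prove a discrete spectrum estimate on the nonconforming Morley space, where $\psi\notin H^1(\Omega)$ would have made a direct application of Lemma \ref{lem3.4} impossible. The genuine difficulty is deferred to the next step of the paper, namely the proof of the generalized coercivity result on $S_E^h$ that uses this lemma together with the enriching-operator bounds \eqref{eq20171006_1} and the stability results of Theorems \ref{thm20180517_1}--\ref{thm20171006_3}.
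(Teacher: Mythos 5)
Your proposal is correct and takes exactly the paper's approach: the paper simply observes that the $C^1$ conforming space $\widetilde{S}_E^h$ is contained in $H^1(\Omega)$, so the discrete infimum ranges over a subset of the admissible class in Lemma \ref{lem3.4} and the bound $\lambda_{CH}^{CONF}\geq -C_0$ ``holds automatically.'' One small inaccuracy worth noting (though it does not affect the argument): the zero-mean compatibility needed to solve the Neumann problem comes from the admissibility constraint $\Delta w=\psi$ itself, which is imposed identically in both the continuous and discrete infima, not from $\psi$ lying in $S_E^h$ or being an enriching interpolant.
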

%\begin{lemma}\label{lem20170813_4}
%\begin{alignat*}{2}
%\|\E v-v\|_{L^2(\Omega)}\le& Ch^2\sum_{T\in\T_h}|v|_{H^1(T)}^2\qquad&&\forall v\in S^h,\\
%\|\E v-v\|_{2,2,h}\le& C\|v\|_{2,2,h}\qquad&&\forall v\in S^h.
%\end{alignat*}
%\end{lemma}

Before the generalized coercivity result is given, the following lemma is needed. It is about continuous $H^{-1}(\Omega)$ norm. 
\begin{lemma}\label{lem20171108_1}
The $H^{-1}$ norm has the following equivalent forms
\begin{align*}%\label{eq20171108_2}
\|\Phi\|_{H^{-1}}=\mathop{\sup}_{0\neq\xi\in H^1\cap L^2_0}\limits\frac{(\Phi,\xi)}{|\xi|_{H^1}}.%=\mathop{\sup}_{0\neq\xi\in H^1}\limits\frac{(\Phi,\xi)}{|\xi|_{H^1}}.
\end{align*}
\end{lemma}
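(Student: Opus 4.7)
The plan is a standard duality argument in two directions, using the identity \eqref{eq6} that characterizes $-\Delta^{-1}$ weakly, together with the definition of $\|\cdot\|_{H^{-1}}$ given immediately before the lemma.

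First I would establish the upper bound $\|\Phi\|_{H^{-1}} \ge \sup_{0\neq\xi\in H^1\cap L_0^2} (\Phi,\xi)/|\xi|_{H^1}$. For any $\xi\in H^1(\Omega)\cap L^2_0(\Omega)$, equation \eqref{eq6} gives
\begin{align*}
(\Phi,\xi) = -(\nabla\Delta^{-1}\Phi,\nabla\xi).
\end{align*}
Applying the Cauchy--Schwarz inequality together with the definition $\|\Phi\|_{H^{-1}} = \|\nabla\Delta^{-1}\Phi\|_{L^2}$ yields $(\Phi,\xi)\le \|\Phi\|_{H^{-1}}\,|\xi|_{H^1}$, so dividing by $|\xi|_{H^1}$ and taking the supremum over $\xi$ gives the desired inequality.

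Second I would show the reverse inequality by producing an extremizer. Since $\Phi\in L^2_0(\Omega)$ (which is the setting in which $\|\Phi\|_{H^{-1}}$ was defined just before the lemma), the function $\xi^\ast := -\Delta^{-1}\Phi$ lies in $H^2(\Omega)\cap L^2_0(\Omega)\subset H^1(\Omega)\cap L^2_0(\Omega)$ and is admissible. Using \eqref{eq7} and the definition of the $H^{-1}$-norm,
\begin{align*}
(\Phi,\xi^\ast) = (\Phi,-\Delta^{-1}\Phi) = \|\nabla\Delta^{-1}\Phi\|_{L^2}^2 = \|\Phi\|_{H^{-1}}^2,
\end{align*}
while $|\xi^\ast|_{H^1} = \|\nabla\Delta^{-1}\Phi\|_{L^2} = \|\Phi\|_{H^{-1}}$. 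Therefore $(\Phi,\xi^\ast)/|\xi^\ast|_{H^1} = \|\Phi\|_{H^{-1}}$, which shows the supremum is attained and proves the reverse inequality (assuming $\Phi\not\equiv 0$; the case $\Phi\equiv 0$ is trivial).

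There is no real obstacle here; the only point requiring a moment of care is ensuring that both the extremizer $\xi^\ast = -\Delta^{-1}\Phi$ and the test functions $\xi$ lie in $L^2_0(\Omega)$, so that \eqref{eq6} is applicable and $-\Delta^{-1}\Phi$ is unambiguously defined. This is built into the hypothesis $\Phi\in L^2_0(\Omega)$ implicit in the statement.
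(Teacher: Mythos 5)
Your proof is correct and follows essentially the same route as the paper: Cauchy--Schwarz applied to the identity \eqref{eq6} for the upper bound, and the choice $\xi=-\Delta^{-1}\Phi$ as extremizer for the reverse inequality. The only cosmetic difference is that you invoke \eqref{eq7} explicitly and handle the trivial case $\Phi\equiv 0$, which the paper leaves implicit.
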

\begin{proof}
By \eqref{eq6} and Holder's inequality,
\begin{align*}
(\Phi,\xi)=-(\nabla\Delta^{-1}\Phi,\nabla\xi)\le\|\nabla\Delta^{-1}\Phi\|_{L^2}\|\nabla\xi\|_{L^2},
\end{align*}
Then we have
\begin{align*}
\mathop{\sup}_{0\neq\xi\in H^1\cap L^2_0}\limits\frac{(\Phi,\xi)}{|\xi|_{H^1}}\le \|\Phi\|_{H^{-1}}.
\end{align*}
On the other hand, choose $\xi=-\Delta^{-1}\Phi$, then
\begin{align*}
\mathop{\sup}_{0\neq\xi\in H^1\cap L^2_0}\limits\frac{(\Phi,\xi)}{|\xi|_{H^1}}\ge\frac{(\nabla\Delta^{-1}\Phi,\nabla\Delta^{-1}\Phi)}{\|\nabla\Delta^{-1}\Phi\|_{L^2}}=\|\Phi\|_{H^{-1}}.
\end{align*}
Then the lemma is proved.
\end{proof}

%\begin{remark}
%The right-hand side term $\mathop{\sup}_{0\neq\xi\in H^1\cap L^2_0}\limits\frac{(\Phi,\xi)}{|\xi|_{H^1}}$ of the equality in this lemma can be changed into $\mathop{\sup}_{0\neq\xi\in H^1}\limits\frac{(\Phi,\xi)}{|\xi|_{H^1}}$.
%\end{remark}

Then we prove the generalized coercivity result in the Morley element space using the properties of the enriching operators.
\begin{theorem}\label{thm3.7_add}
Suppose there exist positive numbers $C_2>0$ and $\gamma_3>0$ such that the solution $u$ of 
problem \eqref{eq20170504_1}--\eqref{eq20170504_5} satisfies
\begin{equation}\label{eq3.23}
\|u-P_h u\|_{L^{\infty}((0,T);L^{\infty})}
\leq C_2 h \eps^{-\gamma_3},
\end{equation}
where the existence of $C_2$ and $\gamma_3$ can be guaranteed by imbedding the $L^{\infty}$ space to $H^2$ space.

Suppose $\psi\in S_E^h\cap L^2_0(\Omega)$ and the mesh constraints in Theorem \ref{thm20171006_3} hold, then there exists an $\eps$-independent and $h$-independent constant $C>0$ such that for $\eps\in(0,1)$ and a.e. $t\in [0,T]$
\begin{equation*}%\label{eq20170812_1}
%\lambda_{CH}^{MOR}
N:=(\epsilon-\epsilon^4)(\nabla\psi,\nabla\psi)_h+
\frac{1}{\epsilon}(f'(P_hu(t))\psi,\psi)_h\geq -C\|\nabla\Delta^{-1}\psi\|_{L^2}^2-C\epsilon^{-2\gamma_2-4}h^2,
\end{equation*}
provided that $h$ satisfies the constraint
\begin{align}\label{eq3.24b}
%h&\le C\epsilon^{\gamma_3},\\
%h^{2-\frac{d}{2}}&\leq (C_1C_2)^{-1}\eps^{\max\{\sigma_1+\frac{11}{2},\sigma_3+4\}},\\
h\leq (C_1C_2)^{-1}\eps^{\gamma_3+3},  
%\label{eq3.24c} 
\end{align} 
where $C_1$ arises from the following equality:
\begin{align*}
C_1&:=\max_{|\xi|\le C_3}|f{''}(\xi)|.
%\|u-\hP_h u\|_{L^{\infty}((0,T);L^{\infty}(\Ome)}
%&\leq C_2 h^{2-\frac{d}{2}} \eps^{\min\{-\sigma_1-\frac52,-\sigma_3-1\}},\label{eq3.24e}\\
\end{align*}
\end{theorem}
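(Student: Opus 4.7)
The strategy is to transfer the coercivity question from the nonconforming Morley space $S_E^h$ to its $C^1$ conforming relative $\widetilde{S}_E^h$, where the spectrum estimate (Lemma \ref{lem20170812_1}) is available, via the enriching operator $\E$. Write $\psi = \E\psi + (\psi - \E\psi)$ and expand both the Dirichlet and the $f'$-weighted inner products. The resulting decomposition takes the schematic form
\begin{align*}
N = \epsilon\|\nabla\E\psi\|_{L^2}^2 + \frac{1}{\epsilon}(f'(u)\E\psi,\E\psi) + R_1 + R_2 + R_3 - \epsilon^4\|\nabla\psi\|_{0,2,h}^2,
\end{align*}
where $R_1, R_2$ collect the cross terms in $(\psi-\E\psi)$ against $(\psi+\E\psi)$ for the gradient and potential parts, respectively, and $R_3 := \frac{1}{\epsilon}\bigl((f'(P_h u) - f'(u))\psi,\psi\bigr)_h$ captures the replacement of $P_h u$ by $u$ inside $f'$.

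For the leading piece, I would first replace $\E\psi$ by its mean-zero centering; since $\psi$ itself has zero mean, the mean $\overline{\E\psi} = |\Omega|^{-1}(\E\psi - \psi, 1)$ is $O(h^2|\psi|_{2,2,h})$ by \eqref{eq20171006_1}, so the centering is a harmless perturbation. Applying Lemma \ref{lem20170812_1} to the centered function gives
\begin{align*}
\epsilon\|\nabla\E\psi\|_{L^2}^2 + \frac{1}{\epsilon}(f'(u)\E\psi,\E\psi) \geq -C_0\,\|\nabla\Delta^{-1}(\E\psi - \overline{\E\psi})\|_{L^2}^2.
\end{align*}
Using elliptic regularity together with \eqref{eq20171006_1} again, I expect $\|\nabla\Delta^{-1}(\E\psi - \psi)\|_{L^2} \leq C h^2|\psi|_{2,2,h}$, so the right-hand side becomes $-C\|\nabla\Delta^{-1}\psi\|_{L^2}^2$ modulo an error of size $h^4|\psi|_{2,2,h}^2$; invoking the $\|\cdot\|_{2,2,h}$ stability of Theorem \ref{thm20171006_3} converts this into the claimed $C\epsilon^{-2\gamma_2-4}h^2$ remainder.

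For $R_3$, the mean value theorem together with \eqref{eq3.23} yields $\|f'(P_h u) - f'(u)\|_{L^\infty} \leq C_1 C_2 h \epsilon^{-\gamma_3}$, so the constraint \eqref{eq3.24b} produces $|R_3| \leq \epsilon^2\|\psi\|_{L^2}^2$; the interpolation identity in Remark \ref{rem20180421_1} (applied with a well-chosen parameter $a$) then splits this into a small multiple of $\|\nabla\Delta^{-1}\psi\|_{L^2}^2$ plus a contribution bounded by $\epsilon^4\|\nabla\psi\|_{0,2,h}^2$, which is exactly the reservoir subtracted on the left-hand side of $N$. The cross terms $R_1, R_2$ are bounded via \eqref{eq20171006_1} by $Ch|\psi|_{2,2,h}\|\nabla(\psi+\E\psi)\|_{0,2,h}$ and $Ch^2|\psi|_{2,2,h}\|\psi+\E\psi\|_{L^2}$ and absorbed by the same Young-plus-interpolation procedure together with Theorem \ref{thm20171006_3}. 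The principal obstacle will be the bookkeeping of $\epsilon$-powers: all $\epsilon^{-1}$ factors coming from the nonlinear weight, the $\epsilon^{-\gamma_2}$ factor from Theorem \ref{thm20171006_3}, and the $\epsilon^{-\gamma_3}$ factor from \eqref{eq3.23} must balance so that the $\epsilon^4$ reservoir suffices and the mesh constraint is exactly \eqref{eq3.24b}, leaving only $-C\|\nabla\Delta^{-1}\psi\|_{L^2}^2$ together with the $O(\epsilon^{-2\gamma_2-4}h^2)$ remainder on the right.
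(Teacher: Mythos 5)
Your overall architecture matches the paper's: enrich $\psi$ into the HCT space via $\E$, invoke the conforming spectrum estimate of Lemma \ref{lem20170812_1}, control the transfer errors by \eqref{eq20171006_1} together with the stability bound $\|\psi\|_{2,2,h}\le C\eps^{-\gamma_2}$ from Theorem \ref{thm20171006_3}, and replace $f'(P_hu)$ by $f'(u)$ using the mean value theorem with \eqref{eq3.23} and \eqref{eq3.24b}. However, there is a genuine gap in your absorption mechanism: your ``reservoir'' has the wrong sign. In your decomposition $N=\epsilon\|\nabla\E\psi\|_{L^2}^2+\frac1\epsilon(f'(u)\E\psi,\E\psi)+R_1+R_2+R_3-\epsilon^4\|\nabla\psi\|_{0,2,h}^2$, you spend the entire conforming part on Lemma \ref{lem20170812_1}, which returns only $-C_0\|\nabla\Delta^{-1}\E\psi\|_{L^2}^2$ and leaves no positive gradient term behind. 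Every subsequent application of Remark \ref{rem20180421_1} or Young's inequality to $R_1,R_2,R_3$ then generates \emph{additional negative} multiples of $\epsilon^4\|\nabla\psi\|_{0,2,h}^2$, which pile on top of the explicit $-\epsilon^4\|\nabla\psi\|_{0,2,h}^2$ already present; a negative term cannot absorb other negative terms. Your final bound therefore contains an uncompensated $-C\epsilon^4\|\nabla\psi\|_{0,2,h}^2$. This cannot be dominated by the admissible right-hand side: the best generic comparisons give $\epsilon^4\|\nabla\psi\|_{0,2,h}^2\le C\epsilon^4h^{-4}\|\nabla\Delta^{-1}\psi\|_{L^2}^2$ or, via Theorem \ref{thm20171006_3}, $\epsilon^4\|\nabla\psi\|_{0,2,h}^2\le C\epsilon^{4-2\gamma_2}$, and neither is of the form $C\|\nabla\Delta^{-1}\psi\|_{L^2}^2+C\epsilon^{-2\gamma_2-4}h^2$ (the second does not even vanish as $h\to0$), so the theorem, and with it the error estimate that uses it, would not follow.

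The missing idea is the fractional splitting the paper performs \emph{before} enriching: writing $\epsilon-\epsilon^4=\epsilon^4+(1-2\epsilon^3)\epsilon$ and $\frac1\epsilon=2\epsilon^2+(1-2\epsilon^3)\frac1\epsilon$, so that $N=\epsilon^4(\nabla\psi,\nabla\psi)_h+2\epsilon^2(f'(P_hu)\psi,\psi)_h+(1-2\epsilon^3)\bigl[\epsilon(\nabla\psi,\nabla\psi)_h+\frac1\epsilon(f'(P_hu)\psi,\psi)_h\bigr]$. The $P_hu\to u$ replacement, the conversion $\psi\to\E\psi$, and Lemma \ref{lem20170812_1} are applied only inside the $(1-2\epsilon^3)$-weighted bracket, while the genuinely positive term $\epsilon^4(\nabla\psi,\nabla\psi)_h$ stays outside as the reservoir that pays for all the losses: the interpolation of $-C\epsilon^2\|\psi\|_{L^2}^2$ (from both $2\epsilon^2(f'(P_hu)\psi,\psi)_h$ and the $f'$ replacement), and the $O(\epsilon^{-2\gamma_2-2}h^2)$, $O(\epsilon^{-2\gamma_2-4}h^2)$ conversion costs; the paper even retains $\frac{\epsilon^4}{2}(\nabla\psi,\nabla\psi)_h$ at the end. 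Your argument can be repaired in the same spirit (e.g., apply Lemma \ref{lem20170812_1} only to a $(1-2\epsilon^3)$ fraction of the conforming part, and convert the leftover $2\epsilon^4\|\nabla\E\psi\|_{L^2}^2$ back to $\epsilon^4\|\nabla\psi\|_{0,2,h}^2$ up to $O(\epsilon^{4-2\gamma_2}h^2)$), but as written the absorption step fails. Your remaining ingredients, including the mean-centering of $\E\psi$ and the bound $\|\nabla\Delta^{-1}(\E\psi-\psi)\|_{L^2}\le\|\E\psi-\psi\|_{L^2}\le Ch^2\eps^{-\gamma_2}$, do match the paper, modulo the $\epsilon^{-1}$ factor missing from your stated bound on $R_2$, which you already flagged as bookkeeping.
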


\begin{proof}
Based on the boundness of the exact solution of the Cahn-Hilliard equation, we assume there exists $C_3$ such that
\begin{align}\label{eq20170813_2add}
\|u\|_{L^{\infty}((0,T);L^{\infty})}\le C_3,\qquad \|P_hu\|_{L^{\infty}((0,T);L^{\infty})}\le C_3.
\end{align}

Then under the mesh constraint \eqref{eq3.24b}, we have
\begin{align*}%\label{eq20170930_1}
\|f'(P_hu(t))-f'(u(t))\|_{L^{\infty}((0,T);L^{\infty})}\le\epsilon^3.
\end{align*}

Then we have
\begin{align*}%\label{eq20170813_5}
\|f'(P_hu(t))\|_{L^{\infty}((0,T);L^{\infty})}\ge\|f'(u(t))\|_{L^{\infty}((0,T);L^{\infty})}-\epsilon^3.
\end{align*}

Then the term $N$ can be bounded by
\begin{align}\label{eq20170813_1}
N=&(\epsilon-\epsilon^4)(\nabla\psi,\nabla\psi)_h+
\frac{1}{\epsilon}(f'(P_hu(t)))\psi,\psi)_h\\
=&\epsilon^4(\nabla\psi,\nabla\psi)_h+2\epsilon^2(f'(P_hu(t))\psi,\psi)_h\notag\\
&\quad+(1-2\epsilon^3)\bigl[\epsilon(\nabla\psi,\nabla\psi)_h+\frac{1}{\epsilon}(f'(P_hu(t)))\psi,\psi)_h\bigr]\notag\\
\ge&\epsilon^4(\nabla\psi,\nabla\psi)_h+2\epsilon^2(f'(P_hu(t))\psi,\psi)_h-(1-2\epsilon^3)\epsilon^2(\psi,\psi)\notag\\
&\quad+(1-2\epsilon^3)\bigl[\epsilon(\nabla\psi,\nabla\psi)_h+\frac{1}{\epsilon}(f'(u(t))\psi,\psi)\bigr]\notag.
\end{align}

Besides, %assume $\psi\in L^2_0$, which is a reasonable assumption since $\psi=P_hu(t_n)-u_h^n$ is chosen in the last subsection. 
using the Lemma \ref{lem20171106_2} and Remark \ref{rem20180421_1}, we obtain
\begin{align}\label{eq20171108_1}
-C\epsilon^2(\psi,\psi)&=C\epsilon^2(\nabla\Delta_h^{-1}\psi,\nabla\psi)_{h}+C\epsilon^2(\Delta_h^{-1}\psi-\Delta^{-1}\psi,\psi)\\
&\ge-\frac{\epsilon^4}{8}(\nabla\psi,\nabla\psi)_h-C\|\nabla\Delta^{-1}\psi\|_{L^2}^2-Ch^2\|\psi\|_{L^2}^2\notag\\
&\quad-C\epsilon^2h^2\|\psi\|_{L^2}^2-\epsilon^2(\psi,\psi)_h\notag,\\
&\ge-\frac{\epsilon^4}{4}(\nabla\psi,\nabla\psi)_h-C\|\nabla\Delta^{-1}\psi\|_{L^2}^2\notag.
\end{align}
%where $h\le C\epsilon$ is needed in the last inequality.

Then we have
\begin{align}\label{eq20170813_4}
N&\ge\frac{5\epsilon^4}{8}(\nabla\psi,\nabla\psi)_h-C\|\nabla\Delta^{-1}\psi\|_{L^2}^2\\
&\qquad+(1-2\epsilon^3)\bigl[\epsilon(\nabla\psi,\nabla\psi)_h+\frac{1}{\epsilon}(f'(u(t)))\psi,\psi)\bigr].\notag
\end{align}

If $\psi=P_hu(t_n)-u_h^n$, then by Theorem \ref{thm20171006_3}, we have
\begin{align*}
\|\psi\|_{2,2,h}\le C\epsilon^{-\gamma_2}.
\end{align*}

Define $\widetilde{\psi}$ by $\widetilde{\psi}=\E\psi$, then by \eqref{eq20171006_1}, we have
\begin{align}\label{eq20170821_4_add}
|\widetilde{\psi}-\psi|_{1,2,h}&\le C\epsilon^{-\gamma_2}h\\
|\widetilde{\psi}|_{1,2,h}&\le |\psi|_{1,2,h}+C\epsilon^{-\gamma_2}h.\label{eq20170821_4}
\end{align}

%Assume there exists $v$ such that $\psi=I_h v$, and 
%\begin{align*}
%\|v\|_{H^3}\le C\epsilon^{-\gamma_3}.
%\end{align*}
%
%Define $\widetilde{\psi}$ by $\widetilde{\psi}=\E\psi$, then by \eqref{eq20170930_2}, we have
%\begin{align}\label{eq20170821_4}
%\|\widetilde{\psi}-\psi\|_{1,2,h}\le Ch^{2}\epsilon^{-\gamma_3}.
%\end{align}

%The interpolation of $\|\cdot\|_{0,2,h}$ can be written as
%\begin{align}\label{eq20170930_4}
%\|\psi\|_{0,2,h}^2\le \frac{\rho}{2}\|\psi\|_{1,2,h}^2+\frac{1}{2\rho}\|\nabla\Delta^{-1}\psi\|_{0,2,h}^2.
%\end{align}

Using \eqref{eq20170821_4} and Remark \ref{rem20180421_1}, we obtain
\begin{align}\label{eq20170821_5}
(1-2\epsilon^3+\frac{\epsilon^3}{8})\epsilon(\nabla\psi,\nabla\psi)_{0,2,h}&\ge (1-2\epsilon^3)\epsilon(\nabla\widetilde\psi,\nabla\widetilde\psi)_{0,2,h}-C\epsilon^{-2\gamma_2-2}h^2\\
%&\ge (1-2\epsilon^2)\epsilon(\nabla\widetilde\psi,\nabla\widetilde\psi)_{0,2,h}-Ch^4\epsilon^{-\gamma};\notag\\
(1-2\epsilon^3)\frac{1}{\epsilon}(f'(u(t)))\psi,\psi)&=(1-2\epsilon^3)\frac{1}{\epsilon}(f'(u(t)))\widetilde\psi,\widetilde\psi)\label{eq20170821_6}\\
&\quad+(1-2\epsilon^3)\frac{1}{\epsilon}(f'(u(t))),\psi^2-\widetilde\psi^2)\notag\\
&\ge(1-2\epsilon^3)\frac{1}{\epsilon}(f'(u(t)))\widetilde\psi,\widetilde\psi)\notag\\
&\quad-\frac{C}{\epsilon^4}\|\psi-\widetilde\psi\|_{L^2}^2-\epsilon^2\|\psi\|_{L^2}^2\notag\\
&\ge(1-2\epsilon^3)\frac{1}{\epsilon}(f'(u(t)))\widetilde\psi,\widetilde\psi)\notag\\
&\quad-C\epsilon^{-2\gamma_2-4}h^2-\frac{\epsilon^4}{8}(\nabla\psi,\nabla\psi)_h\notag-C\|\nabla\Delta^{-1}\psi\|_{L^2}^2.
\end{align}

Using \eqref{eq20171006_1}, the definition of operator $\Delta^{-1}$ and Lemma \ref{lem20171108_1}, we obtain
\begin{align}\label{eq20171001_8}
\|\nabla\Delta^{-1}\widetilde\psi-\nabla\Delta^{-1}\psi\|_{L^2}&\le\|\widetilde\psi-\psi\|_{L^2}\\
&\le Ch^2\epsilon^{-\gamma_2}.\notag
\end{align}

When $h\le C\epsilon^{\gamma_1}$, using \eqref{eq20170821_5}--\eqref{eq20171001_8}, and Lemma \ref{lem20170812_1}, equation \eqref{eq20170813_4} can be bounded by
\begin{align}\label{eq20170813_7}
N\ge&\frac{\epsilon^4}{2}(\nabla\psi,\nabla\psi)_h-C\|\nabla\Delta^{-1}\psi\|_{L^2}^2+(1-2\epsilon^3)\bigl[\epsilon(\nabla\widetilde\psi,\nabla\widetilde\psi)_h\\
&+\frac{1}{\epsilon}(f'(u(t)))\widetilde\psi,\widetilde\psi)_h\bigr]-C\epsilon^{-2\gamma_2-4}h^2\notag\\
\ge&\frac{\epsilon^4}{2}(\nabla\psi,\nabla\psi)_h-C\|\nabla\Delta^{-1}\psi\|_{L^2}^2-C\|\nabla\Delta^{-1}\widetilde\psi\|_{L^2}^2-C\epsilon^{-2\gamma_2-4}h^2\notag\\
\ge&-C\|\nabla\Delta^{-1}\psi\|_{L^2}^2-C\epsilon^{-2\gamma_2-4}h^2\notag.
\end{align}
\end{proof}

%\begin{remark}
%Define the low bound of the discrete spectrum in the Morley element space $\lambda_{CH}^{MOR}$ by
%\begin{align*}%\label{eq20170813_8}
%\lambda_{CH}^{MOR}:=\mathop{\inf}_{\substack{0\neq\psi\in S^h_E}}%\\ \Delta w=\psi}}
%\limits\frac{(\epsilon-\epsilon^4)(\nabla\psi,\nabla\psi)_h+
%\frac{1}{\epsilon}(f'(P_hu(t)))\psi,\psi)_h}{\|\nabla\Delta^{-1}\psi\|_{L^2}^2}.
%\end{align*}
%We didn't show that $\lambda_{CH}^{MOR}$ is an $\epsilon$-independent constant, and we proved a weaker version of discrete spectrum estimate.
%\end{remark}

\subsection{The error estimates in polynomial of $\frac{1}{\epsilon}$}
In this subsection, an error estimate of $\widetilde{\Delta}_h^{-1}(P_hu(t_n)-u_h^n)$ with polynomial dependence on $\frac{1}{\epsilon}$ is derived, and as corollaries, error estimates of $\hat\Delta_h^{-1}(P_hu(t_n)-u_h^n)$, $\Delta^{-1}(P_hu(t_n)-u_h^n)$, $\underline\Delta_h^{-1}(P_hu(t_n)-u_h^n)$ and $\Delta_h^{-1}(P_hu(t_n)-u_h^n)$ with polynomial dependence on $\frac{1}{\epsilon}$ are also given.
\begin{theorem}\label{thm20171007_1}
Suppose $u$ is the solution of \eqref{eq20170504_1}--\eqref{eq20170504_5}, $u_h^n$ is the numerical solution of scheme \eqref{eq20170504_11}--\eqref{eq20170504_12}, and assumption \eqref{eq20180425_5} holds. Define $\theta^n:=P_hu(t_n)-u_h^n$, then under following mesh constraints
\begin{alignat*}{3}
h&\le C\epsilon^2k, \qquad&&k\le C\epsilon^{3\sigma_1+13},\\
h&\le C\epsilon^{4\gamma_1+4},\qquad&&h \leq (C_1C_2)^{-1}\eps^{\gamma_3+3},
\end{alignat*}

we have the following error estimate
\begin{align*}
&\frac{1}{4}\|\nabla \widetilde\Delta_h^{-1}\theta^{\ell}\|_{0,2,h}^2+\frac{k^2}{4}\sum_{n=1}^\ell\|\nabla \widetilde\Delta_h^{-1}d_t\theta^n\|_{0,2,h}^2+\frac{\epsilon^4k}{16}\sum_{n=1}^\ell(\nabla\theta^n,\nabla\theta^n)_h\\
&\qquad+\frac{k}{\epsilon}\sum_{n=1}^\ell\|\theta^n\|_{0,4,h}^4\leq C(\tilde\rho_0(\epsilon)|\ln h|h^2+\tilde\rho_1(\epsilon)k^2).
\end{align*}
\end{theorem}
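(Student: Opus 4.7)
The plan is to test the error equation against the discrete $H^{-1}$-type Riesz representer $-\widetilde\Delta_h^{-1}\theta^n$ and use the generalized coercivity result of Theorem \ref{thm3.7_add} in place of a discrete spectrum estimate. This is what converts what would otherwise be a Gronwall loop yielding $e^{C/\epsilon}$ into a bound polynomial in $1/\epsilon$. I would first subtract \eqref{eq20170504_11} from the weak form \eqref{eq20170504_7} evaluated at $t=t_n$, and use the defining identity \eqref{eq20170504_14}--\eqref{eq20170504_15} of $P_h$ together with Lemma \ref{lem20170723_1} to obtain the error equation
\begin{align*}
(d_t\theta^n,v_h)+\epsilon a_h(\theta^n,v_h)+\tfrac{1}{\epsilon}(\nabla f(P_hu(t_n))-\nabla f(u_h^n),\nabla v_h)_h = \mathrm{Res}^n(v_h)\qquad\forall v_h\in S^h_E,
\end{align*}
where $\mathrm{Res}^n$ collects the time-truncation remainder $d_tu(t_n)-u_t(t_n)$ and the consistency/projection errors from $P_h$ and the Morley nonconformity, controlled via the projection estimates of Lemmas \ref{lem20170604_1_add}--\ref{lem20170914_1} and Lemma \ref{lem20170723_1}.

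Next, I would set $v_h=-\widetilde\Delta_h^{-1}\theta^n$. The time-derivative term, together with the bridging inequalities between $\widetilde\Delta_h^{-1}$, $\Delta_h^{-1}$ and $\Delta^{-1}$ (Lemmas \ref{lem20171106_1}--\ref{lem20170914_1} and Remark \ref{rem20180421_1}) and the identity $2k(d_ta^n)a^n=(a^n)^2-(a^{n-1})^2+(a^n-a^{n-1})^2$, supplies the telescoping piece that becomes the first term on the left and the leftover $\tfrac{k^2}{2}\|\nabla\widetilde\Delta_h^{-1}d_t\theta^n\|_{0,2,h}^2$ that becomes the second term, modulo $O(h^2)$ discrepancies absorbable under the mesh constraints. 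The bilinear term reduces, by \eqref{eq3.2b}, to $\epsilon\|\nabla\theta^n\|_{0,2,h}^2$ plus a small $\beta(-\Delta^{-1}\theta^n,\theta^n)$ piece. The nonlinear term is handled by integrating by parts elementwise, replacing $-\Delta\widetilde\Delta_h^{-1}\theta^n$ by $\theta^n$ modulo projection error, and Taylor-expanding $f(P_hu)-f(u_h)=f'(P_hu)\theta+\tfrac12 f''(\xi)\theta^2$, producing the linearized contribution $\tfrac{1}{\epsilon}(f'(P_hu(t_n))\theta^n,\theta^n)$ plus a cubic remainder.

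At this stage the key combination $\epsilon\|\nabla\theta^n\|_{0,2,h}^2+\tfrac{1}{\epsilon}(f'(P_hu(t_n))\theta^n,\theta^n)$ sits on the left, and Theorem \ref{thm3.7_add} bounds it below by $\epsilon^4\|\nabla\theta^n\|_{0,2,h}^2-C\|\nabla\Delta^{-1}\theta^n\|_{L^2}^2-C\epsilon^{-2\gamma_2-4}h^2$, delivering the third term on the left and leaving the $\|\nabla\Delta^{-1}\theta^n\|_{L^2}^2$ piece to be absorbed by a subsequent Gronwall. On the right, the time-truncation residual is bounded by $Ck^2\int_0^T\|u_{tt}\|_{H^{-1}}^2\,dt\le C\tilde\rho_1(\epsilon)k^2$ via Theorem \ref{prop2.1}, while the projection/nonconformity errors contribute $C\tilde\rho_0(\epsilon)|\ln h|h^2$, the $|\ln h|$ factor arising from the 2D discrete Sobolev inequality needed to bound $\|\theta^n\|_{0,\infty,h}$ used to dominate $f''(\xi)$.

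The hard part is the cubic Taylor remainder: bounding it crudely in $L^3$ and then invoking the classical discrete Gronwall would reintroduce $e^{C/\epsilon}$. Instead, I would retain a portion as $\tfrac{k}{\epsilon}\|\theta^n\|_{0,4,h}^4$ on the left (the fourth term of the statement), and express the remaining piece in the form $k_n S_n^p$ with $p>1$ where $S_n=\|\nabla\widetilde\Delta_h^{-1}\theta^n\|_{0,2,h}^2$, by interpolating the intermediate $L^q$ norms of $\theta^n$ between the discrete $H^{-1}$ norm and the $\epsilon^4\|\nabla\theta^n\|^2$ term already controlled on the left. Summing over $n$ then puts the estimate precisely in the form $S_{\ell+1}-S_\ell\le b_\ell S_\ell+k_\ell S_\ell^p$ required by the generalized discrete Gronwall inequality of Lemma \ref{lem2.1}. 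The mesh constraints $h\le C\epsilon^2k$, $k\le C\epsilon^{3\sigma_1+13}$, $h\le C\epsilon^{4\gamma_1+4}$, $h\le(C_1C_2)^{-1}\epsilon^{\gamma_3+3}$ are exactly what guarantee the applicability of Theorem \ref{thm3.7_add} and ensure that the superlinear factor $\sum k_\ell$ in Lemma \ref{lem2.1} stays bounded, yielding the asserted polynomial-in-$1/\epsilon$ estimate.
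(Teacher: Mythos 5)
Your proposal is correct and follows essentially the same route as the paper: testing the error equation with $-\widetilde\Delta_h^{-1}\theta^n$, using the operator-bridging lemmas for the telescoping and discrepancy terms, invoking the generalized coercivity of Theorem \ref{thm3.7_add} on the combination $\epsilon\|\nabla\theta^n\|_{0,2,h}^2+\tfrac{1}{\epsilon}(f'(P_hu(t_n))\theta^n,\theta^n)_h$, retaining the quartic term on the left while reducing the cubic remainder (via Gagliardo--Nirenberg/interpolation) to a superlinear term $k_\ell S_\ell^{3/2}$, and closing with the generalized discrete Gronwall inequality of Lemma \ref{lem2.1}. The only cosmetic difference is your attribution of the $|\ln h|$ factor to a discrete Sobolev bound on $\|\theta^n\|_{0,\infty,h}$, whereas in the paper it enters through the $H^{-1}$ bound of the projection-error terms $-d_t\rho^n+\alpha\rho^n$; this does not affect the validity of the argument.
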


\begin{proof}
%The error can be written as
%\begin{align}\label{eq20170504_16}
%u(t_n)-u_h^n=(u(t_n)-P_hu(t_n))+(P_hu(t_n)-u_h^n):=\rho^n+\theta^n.
%\end{align}

Using \eqref{eq20170504_11}--\eqref{eq20170504_12}, $\forall v_h\in S^h_E$, we have
\begin{align}\label{eq20170504_17}
&(d_t\theta^n,v_h)+\epsilon a_h(\theta^n,v_h)\\
=&[(d_tP_hu(t_n),v_h)+\epsilon a_h(P_hu(t_n),v_h)]-[(d_tu_h^n,v_h)+\epsilon a_h(u_h^n,v_h)]\notag\\
=&-(d_t\rho^n,v_h)+(d_tu(t_n)+\epsilon\Delta^2u(t_n)-\frac{1}{\epsilon}\Delta f(u(t_n))+\alpha u(t_n),v_h)\notag\\
&\quad-[\frac{1}{\epsilon}(f'(u(t_n))\nabla P_hu(t_n),\nabla v_h)_h+\alpha(P_hu(t_n),v_h)]+\frac{1}{\epsilon}(\nabla f(u_h^n),\nabla v_h)_h\notag\\
=&(-d_t\rho^n+\alpha\rho^n,v_h)-\frac{1}{\epsilon}(f'(u(t_n))\nabla P_hu(t_n)-\nabla f(u_h^n),\nabla v_h)_h\notag\\
&\quad+(R(u_{tt},n),v_h),\notag
%&\quad+(\frac{u_{tt}(t_n)}{2} k,v_h).\notag
\end{align}
where
\begin{equation*} 
R(u_{tt};n):=\frac{1}{k}\int^{t_n}_{t_{n-1}}(s-t_{n-1})u_{tt}(s)\,ds.
\end{equation*}
It follows from \eqref{eq2.19} that
\begin{align*}
k\sum_{n=1}^{\ell}\|R(u_{tt};n)\|_{H^{-1}}^2 
&\leq \frac{1}{k}\sum_{n=1}^{\ell} \Bigl(\int^{t_n}_{t_{n-1}}(s-t_{n-1})^2\,ds\Bigr)
\Bigl(\int^{t_n}_{t_{n-1}}\|u_{tt}(s)\|_{H^{-1}}^2\,ds\Bigr)\\
&\leq Ck^2\tilde\rho_1(\eps).
\end{align*}

Taking $v_h=-\widetilde\Delta_h^{-1}\theta^n$ in \eqref{eq20170504_17}, we have
\begin{align}\label{eq20170507_1}
&(d_t\theta^n,-\widetilde\Delta_h^{-1}\theta^n)+\epsilon a_h(\theta^n,-\widetilde\Delta_h^{-1}\theta^n)+(R(u_{tt},n),-\widetilde\Delta_h^{-1}\theta^n)\\
&=(-d_t\rho^n+\alpha\rho^n,-\widetilde\Delta_h^{-1}\theta^n)-\frac{1}{\epsilon}(f'(u(t_n))\nabla P_hu(t_n)-\nabla f(u_h^n),-\nabla \widetilde\Delta_h^{-1}\theta^n)_h.\notag
\end{align}

By the definition of $\Delta_h^{-1}$ and $\widetilde\Delta_h^{-1}$, then we have 
\begin{align}\label{eq20170507_2}
&(\nabla \Delta_h^{-1}d_t\theta^n,\nabla\widetilde\Delta_h^{-1}\theta^n)_h+\epsilon(\nabla\theta^n,\nabla\theta^n)_h+\beta\epsilon(\widetilde\Delta_h^{-1}\theta^n-\Delta^{-1}\theta^n,\theta^n)\\
&\qquad+\frac{1}{\epsilon}(\nabla f(P_hu(t_n))-\nabla f(u_h^n)),-\nabla \widetilde\Delta_h^{-1}\theta^n)_h+(R(u_{tt},n),-\widetilde\Delta_h^{-1}\theta^n)\notag\\
&=\frac{1}{\epsilon}(f'(P_hu(t_n))\nabla P_hu(t_n)-f'(u(t_n))\nabla P_hu(t_n),-\nabla \widetilde\Delta_h^{-1}\theta^n)\notag\\
&\qquad+(-d_t\rho^n+\alpha\rho^n,-\widetilde\Delta_h^{-1}\theta^n)+(\Delta^{-1}d_t\theta^n-\Delta_h^{-1}d_t\theta^n,\widetilde\Delta_h^{-1}\theta^n)_h.\notag
\end{align}

When $h\le C\epsilon^2k$, using Remark \ref{rem20180421_1}, then the first term on the left-hand side of \eqref{eq20170507_2} can be bounded by
 \begin{align}\label{eq20170714_1}
L_1&=(\nabla \widetilde\Delta^{-1}_hd_t\theta^n,\nabla \widetilde\Delta^{-1}_h\theta^n)_h+(\nabla \Delta_h^{-1}d_t\theta^n-\nabla \widetilde\Delta^{-1}_hd_t\theta^n,\nabla \Delta_h^{-1}\theta^n)_h\\
&\quad+(\nabla \Delta_h^{-1}d_t\theta^n-\nabla \widetilde\Delta^{-1}_hd_t\theta^n,\nabla \widetilde\Delta_h^{-1}\theta^n-\nabla \Delta_h^{-1}\theta^n)_h\notag\\
&\ge\bigl[\frac{k}{2}\|\nabla \widetilde\Delta^{-1}_hd_t\theta^n\|_{0,2,h}^2+\frac{1}{2k}\|\nabla\widetilde\Delta^{-1}_h\theta^n\|_{0,2,h}^2-\frac{1}{2k}\|\nabla\widetilde\Delta^{-1}_h\theta^{n-1}\|_{0,2,h}^2\bigr]\notag\\
&\quad-\bigl[Ch^2\|d_t\theta^n\|_{1,2,h}^2+\|\nabla\Delta^{-1}\theta^n\|_{0,2,h}^2+Ch^2\|\theta^n\|_{0,2,h}^2\bigl]\notag\\
&\quad-\bigl[Ch^4\|d_t\theta^n\|_{1,2,h}^2+\frac{\epsilon^4}{32}\|\nabla\theta^n\|_{0,2,h}^2+C\|\nabla\widetilde{\Delta}_h^{-1}\theta^n\|_{0,2,h}^2\bigl]\notag\\
&\ge\bigl[\frac{k}{2}\|\nabla \widetilde\Delta^{-1}_hd_t\theta^n\|_{0,2,h}^2+\frac{1}{2k}\|\nabla\widetilde\Delta^{-1}_h\theta^n\|_{0,2,h}^2-\frac{1}{2k}\|\nabla\widetilde\Delta^{-1}_h\theta^{n-1}\|_{0,2,h}^2\bigr]\notag\\
&\quad-\bigl[Ch^2\|\nabla d_t\theta^n\|_{0,2,h}^2+Ch^2\|\nabla\widetilde\Delta_h^{-1}d_t\theta^n\|_{0,2,h}^2\notag\\
&\quad+\frac{\epsilon^4}{16}\|\nabla\theta\|_{0,2,h}^2+\|\nabla\widetilde\Delta^{-1}_h\theta^n\|_{0,2,h}^2+Ch^2\|\theta^n\|_{0,2,h}^2\bigr]\notag\\
&\ge\bigl[\frac{k}{2}\|\nabla \widetilde\Delta^{-1}_hd_t\theta^n\|_{0,2,h}^2+\frac{1}{2k}\|\nabla\widetilde\Delta^{-1}_h\theta^n\|_{0,2,h}^2-\frac{1}{2k}\|\nabla\widetilde\Delta^{-1}_h\theta^{n-1}\|_{0,2,h}^2\bigr]\notag\\
&\quad-\bigl[\frac{\epsilon^4 k^2}{32}\|\nabla d_t\theta^n\|_{0,2,h}^2+\frac{\epsilon^4}{8}\|\nabla\theta\|_{0,2,h}^2+\|\nabla\widetilde\Delta^{-1}_h\theta^n\|_{0,2,h}^2\bigr]\notag\\
&\ge\bigl[\frac{k}{2}\|\nabla \widetilde\Delta^{-1}_hd_t\theta^n\|_{0,2,h}^2+\frac{1}{2k}\|\nabla\widetilde\Delta^{-1}_h\theta^n\|_{0,2,h}^2-\frac{1}{2k}\|\nabla\widetilde\Delta^{-1}_h\theta^{n-1}\|_{0,2,h}^2\bigr]\notag\\
&\quad-\bigl[\frac{\epsilon^4}{16}(\|\nabla\theta^n\|_{0,2,h}^2+\|\nabla\theta^{n-1}\|_{0,2,h}^2)+\frac{\epsilon^4}{8}\|\nabla\theta\|_{0,2,h}^2+\|\nabla\widetilde\Delta^{-1}_h\theta^n\|_{0,2,h}^2\bigr]\notag.
\end{align}

%Using inverse inequality, we have
%\begin{align}\label{eq20170716_1}
%L_1&\ge(\frac{k}{2}-Ch)\|\nabla \Delta^{-1}d_t\theta^n\|_{0,2,h}^2+\frac{1}{2k}\|\nabla \Delta^{-1}\theta^n\|_{0,2,h}^2-\frac{1}{2k}\|\nabla \Delta^{-1}\theta^{n-1}\|_{0,2,h}^2\\
%&\quad+Ch(\nabla\theta^n,\nabla\theta^n)_h.\notag
%\end{align}

When $h\le \frac{C}{\beta}\epsilon^2$, the third term on the left-hand side of \eqref{eq20170507_2} can be bounded by
\begin{align}\label{eq20170716_2}
L_3&\le C\beta^2h^2\|\theta^n\|_{1,2,h}^2+\epsilon^2\|\theta^n\|_{0,2,h}^2\\
&\le \frac{\epsilon^4}{8}|\theta^n|_{1,2,h}^2+C\|\nabla\widetilde\Delta^{-1}_h\theta^n\|_{0,2,h}^2.\notag
\end{align}

When $h\le C\epsilon^{4\gamma_1+4}$ and $h\le C\epsilon^4$, the fourth term on the left-hand side of \eqref{eq20170507_2} can be bounded by
\begin{align}\label{eq20170716_4}
L_4&=\frac{1}{\epsilon}(\nabla f(P_hu(t_n))-\nabla f(u_h^n)),\nabla \Delta_h^{-1}\theta^n-\nabla \widetilde\Delta_h^{-1}\theta^n)_h\\
&\quad+\frac{1}{\epsilon}(\nabla f(P_hu(t_n))-\nabla f(u_h^n)),-\nabla \Delta_h^{-1}\theta^n)_h\notag\\
&\ge -\frac{h}{\epsilon}\|\nabla\bigl(f'(P_hu)\theta^n-f''(P_hu)(\theta^n)^2+(\theta^n)^3\bigr)\|_{0,2,h}^2-Ch\|\theta^n\|_{1,2,h}^2\notag\\
&\quad+\frac{1}{\epsilon}(\Delta_h^{-1}\theta^n-\Delta^{-1}\theta^n,f(P_hu(t_n))- f(u_h^n)))_h+\frac{1}{\epsilon}(f(P_hu(t_n))- f(u_h^n),\theta^n)_h\notag\\
&\ge -[C\frac{h}{\epsilon^{4\gamma_1}}|\theta^n|_{1,2,h}^2+Ch|\theta^n|_{1,2,h}^2]-[C\frac{h}{\epsilon^{2}}|\theta^n|_{0,2,h}^2+C\frac{h}{\epsilon^{4\gamma_1}}|\theta^n|_{0,2,h}^2]\notag\\
&\quad+\bigg[\frac{1}{\epsilon}(f'(P_hu(t_n)))\theta^n,\theta^n)_h-\frac{3}{\epsilon}P_hu(t_n)((\theta^n)^2,\theta^n)_h+\frac{1}{\epsilon}((\theta^n)^3,\theta^n)_h\bigg]\notag\\
&\ge -\frac{\epsilon^4}{8}|\theta^n|_{1,2,h}^2-C\|\nabla\widetilde{\Delta}_h^{-1}\theta^n\|_{0,2,h}\notag\\
&\quad+\bigg[\frac{1}{\epsilon}(f'(P_hu(t_n)))\theta^n,\theta^n)_h-\frac{3}{\epsilon}P_hu(t_n)((\theta^n)^2,\theta^n)_h+\frac{1}{\epsilon}((\theta^n)^3,\theta^n)_h\bigg]\notag.
\end{align}

%Based on \eqref{eq20171108_1}, we have
%\begin{align}\label{eq20171109_1}
%(\theta^n,\theta^n)\le\delta(\nabla\theta^n,\nabla\theta^n)_h+\frac{C}{\delta}\|\nabla\Delta^{-1}\theta^n\|_{L^2}^2.
%\end{align}

For the second term inside the brackets and on the right-hand side of \eqref{eq20170716_4}, we appeal to Remark \ref{rem20180421_1}, the discrete energy law and the following Gagliardo-Nirenberg inequality \cite{adams2003sobolev}. Then for any $K\in \cT_h$, we have
\begin{align}\label{eq20170716_5_add}
\|\theta^n\|_{L^3(K)}^3&\leq C\Bigl( \|\nab\theta^n\|_{L^2(K)} 
\|\theta^n\|_{L^2(K)}^{2} +\|\theta^n\|_{L^2(K)}^3 \Bigr)\\
&\le \frac{\epsilon^5}{32C}\|\nab\theta^n\|_{L^2(K)}^2+\frac{C}{\epsilon^{\frac{\sigma_1}{2}+\frac{11}{4}}}\|\theta^n\|_{L^2(K)}^3\notag,\\
&\le \frac{\epsilon^5}{32C}\|\nab\theta^n\|_{L^2(K)}^2+\bigl[\frac{\epsilon^5}{32C}\|\nab\theta^n\|_{L^2(K)}^2+\frac{C}{\epsilon^{2\sigma_1+11}}\|\nabla\Delta^{-1}\theta^n\|_{L^2(K)}^3\bigr]\notag.
%&\le C\epsilon^3\|\nab\theta^n\|_{L^2(K)}^2+\bigl[C\epsilon^3\|\nab\theta^n\|_{L^2(K)}^2+\frac{C}{\epsilon^{2\sigma+3}}\bigl\|\nabla\widetilde\Delta_h^{-1}\theta^n\bigr\|_{L^2(K)}^3\bigr]\notag.
\end{align}

When $h\le C\epsilon^{\sigma_1+2}$, the second term inside the brackets and on the right-hand side of \eqref{eq20170716_4} can be bounded by
\begin{align}\label{eq20171109_3}
&\frac{3}{\epsilon}P_hu(t_n)((\theta^n)^2,\theta^n)_h\\
&\quad\le\frac{\epsilon^4}{16}\|\nab\theta^n\|_{0,2,h}^2+\frac{C}{\epsilon^{2\sigma_1+12}}\|\nabla\Delta^{-1}\theta^n\|_{0,2,h}^3\notag\\
&\quad\le\frac{\epsilon^4}{16}\|\nab\theta^n\|_{0,2,h}^2+\frac{C}{\epsilon^{2\sigma_1+12}}\|\nabla\widetilde\Delta^{-1}\theta^n\|_{0,2,h}^3+\frac{C}{\epsilon^{2\sigma_1+12}}h^3\|\theta^n\|_{0,2,h}^3\notag\\
&\quad\le\frac{\epsilon^4}{8}\|\nab\theta^n\|_{0,2,h}^2+\frac{C}{\epsilon^{2\sigma_1+12}}\|\nabla\widetilde\Delta^{-1}\theta^n\|_{0,2,h}^3\notag.
\end{align}

The fifth term on the left-hand side of \eqref{eq20170507_2} can be bounded by
\begin{align}\label{eq20170716_5}
L_5&\ge -C\|R(u_{tt};n)\|_{H^{-1}}^2-|\nabla\widetilde\Delta_h^{-1}\theta^n|_{0,2,h}^2.
\end{align}

By the mean value theorem and \eqref{eq2.13_add}, the first term on the right-hand side of \eqref{eq20170507_2} can be bounded by
\begin{align}\label{eq20170716_6}
R_1&\le\frac{C}{\epsilon}(f''(\xi)(P_hu(t_n)-u(t_n)),-\nabla \widetilde\Delta_h^{-1}\theta^n)_h\\
&\le\frac{C}{\epsilon^2}\|P_hu(t_n)-u(t_n)\|_{0,2,h}^2+\|\nabla \widetilde\Delta_h^{-1}\theta^n\|_{0,2,h}^2\notag\\
&\le C\epsilon^{-\max\{2\sigma_1+7,2\sigma_3+4\}}h^4+\|\nabla \widetilde\Delta_h^{-1}\theta^n\|_{0,2,h}^2.\notag
\end{align}

%BE CAREFUL: THE FOLLOWING TWO INEQUALITIES CAN BE OBTAINED BY REDEFINE (2.15) PIECEWISE WITH MEAN 0.
By the Poincar$\acute{e}$ inequality and the bounds of $I_1$ in \cite{wu2018analysis}, the summation of the second term on the right-hand side of \eqref{eq20170507_2} can be written as

\begin{align}\label{eq20170716_7}
&\sum_{n=1}^{\ell}\bigl[\sum_{E\in\mathcal{E}_h}(\frac{\partial\Delta^{-1}(-d_t\rho^n+\alpha\rho^n)}{\partial n},-\widetilde\Delta_h^{-1}\theta^n)_E-(\nabla\Delta^{-1}(-d_t\rho^n+\alpha\rho^n),-\nabla\widetilde\Delta_h^{-1}\theta^n)_h\bigr]\\
&\le C\sum_{n=1}^{\ell}\|-d_t\rho^n+\alpha\rho^n\|_{H^{-1}}^2+C\sum_{n=1}^{\ell}\|\nabla\widetilde\Delta_h^{-1}\theta^n\|_{0,2,h}^2\notag\\
&\le C\tilde\rho_0(\epsilon)\frac{|\ln h|h^2}{k}+C\sum_{n=1}^{\ell}\|\nabla\widetilde\Delta_h^{-1}\theta^n\|_{0,2,h}^2,\notag
\end{align}
where Lemma 2.3 in \cite{elliott1989nonconforming} is used in the first inequality and 
\begin{align*}
\tilde\rho_0(\epsilon)&:=\epsilon^4\rho_3(\epsilon)+\epsilon^{-6}\rho_4(\epsilon)+\rho_5(\epsilon),\\
\rho_3(\epsilon)&:=\epsilon^{-\max\{2\sigma_1+\frac{13}{2},2\sigma_3+\frac{7}{2},2\sigma_2+4,2\sigma_4\}
  - \max\{2\sigma_1+5, 2\sigma_3+2\} - 2}+ \epsilon^{-\max\{\sigma_1+\frac52,\sigma_3+1\}-2}\rho_0(\epsilon) +
\epsilon^{-2\sigma_6+1}, \\ 
\rho_4(\epsilon) &:=
\epsilon^{-\max\{2\sigma_1+\frac{13}{2},2\sigma_3+\frac{7}{2},2\sigma_2+4,2\sigma_4\}
+4}, \\
\rho_5(\epsilon) &:=
\epsilon^{-2\max\{2\sigma_1+\frac{13}{2},2\sigma_3+\frac{7}{2},2\sigma_2+4,2\sigma_4\}
+2}.
\end{align*}

%\begin{align}
%R_2&=-(\nabla\Delta_h^{-1}(-d_t\rho^n+\alpha\rho^n),\nabla\widetilde\Delta_h^{-1}\theta^n)+((\Delta^{-1}-\Delta_h^{-1})(-d_t\rho^n+\alpha\rho^n),\widetilde\Delta_h^{-1}\theta^n)\\
%&\le\|-d_t\rho^n+\alpha\rho^n\|_{H^{-1}}^2+\|\nabla\widetilde\Delta_h^{-1}\theta^n\|_{0,2,h}^2+\|(\Delta^{-1}-\Delta_h^{-1})(-d_t\rho^n+\alpha\rho^n)\|_{H^{-1}}^2\notag\\
%&\le C\epsilon^{-\max\{2\sigma_1+5,2\sigma_3+2\}}h^4+C\|R(u_{tt},n)\|_{H^{-1}}^2+\|\nabla\widetilde\Delta_h^{-1}\theta^n\|_{0,2,h}^2.\notag
%\end{align}
%where the high order terms were ignored in equation \eqref{eq20170716_7}. 

Using Remark \ref{rem20180421_1}, then under the mesh constraint $h\le Ck$ and $h\le C\epsilon^4$, the third term on the right-hand side of \eqref{eq20170507_2} can be bounded by
\begin{align}\label{eq20171106_12}
R_3&=\sum_{E\in\mathcal{E}_h}(\frac{\partial\Delta^{-1}(\Delta^{-1}d_t\theta^n-\Delta_h^{-1}d_t\theta^n)}{\partial n},\widetilde\Delta_h^{-1}\theta^n)_E\\
&\quad-(\nabla\Delta^{-1}(\Delta^{-1}d_t\theta^n-\Delta_h^{-1}d_t\theta^n),\nabla\widetilde\Delta_h^{-1}\theta^n)_h\notag\\ 
\le&Ch\|d_t\theta^n\|_{L^2}\|\nabla\widetilde\Delta_h^{-1}\theta^n\|_{L^2}\notag\\
\le&(Ch^3\|\nabla d_t\theta^n\|_{L^2}^2+\frac{h}{4C}\|\nabla\widetilde\Delta_h^{-1}d_t\theta^n\|_{L^2}^2)+C\|\nabla\widetilde\Delta_h^{-1}\theta^n\|_{L^2}^2\notag\\
\le&\frac{\epsilon^4k^2}{32}\|\nabla d_t\theta^n\|_{L^2}^2+\frac{k}{4}\|\nabla\widetilde\Delta_h^{-1}d_t\theta^n\|_{L^2}^2+C\|\nabla\widetilde\Delta_h^{-1}\theta^n\|_{L^2}^2\notag\\
\le&\frac{\epsilon^4}{16}(\|\nabla\theta^n\|_{L^2}^2+\|\nabla\theta^{n-1}\|_{L^2}^2)+\frac{k}{4}\|\nabla\widetilde\Delta_h^{-1}d_t\theta^n\|_{L^2}^2+C\|\nabla\widetilde\Delta_h^{-1}\theta^n\|_{L^2}^2\notag,
\end{align}
where Lemma 2.3 in \cite{elliott1989nonconforming} and the inverse inequality are used in the first inequality.

Combining \eqref{eq20170714_1} to \eqref{eq20171106_12}, we have
\begin{align}\label{eq20170808_1}
&\frac{1}{2}\|\nabla\widetilde \Delta_h^{-1}\theta^n\|_{0,2,h}^2-\frac{1}{2}\|\nabla \widetilde\Delta^{-1}_h\theta^{n-1}\|_{0,2,h}^2+\frac{k^2}{4}\|\nabla\widetilde \Delta_h^{-1}d_t\theta^n\|_{0,2,h}^2\\
&\quad+k(\epsilon-\frac{7\epsilon^4}{8})(\nabla\theta^n,\nabla\theta^n)_h+
\frac{k}{\epsilon}(f'(P_hu(t_n)))\theta^n,\theta^n)_h+\frac{k}{\epsilon}\|\theta^n\|_{0,4,h}^4\notag\\
&\le Ck(-d_t\rho^n+\alpha\rho^n,-\widetilde\Delta_h^{-1}\theta^n)+Ck\|R(u_{tt};n)\|_{H^{-1}}^2+\frac{Ck}{\epsilon^{2\sigma_1+12}}\|\nabla\widetilde\Delta_h^{-1}\theta^n\|_{0,2,h}^3\notag\\
&\quad+C\epsilon^{-\max\{2\sigma_1+7,2\sigma_3+4\}}h^4+Ck\|\nabla\widetilde\Delta_h^{-1}\theta^n\|_{0,2,h}^2.\notag
\end{align}

%Under the following mesh constraints
%\begin{align*}%\label{eq20170808_2}
%h&\le C\epsilon^3,\\
%h(\ln(\frac{1}{h}))^2&\le C\epsilon^{4\gamma_1+4},
%\end{align*}

Taking the summation for $n$ from $1$ to $\ell$, equation \eqref{eq20170808_1} can be changed into
\begin{align}\label{eq20170808_3}
&\frac{1}{2}\|\nabla \widetilde\Delta_h^{-1}\theta^{\ell}\|_{0,2,h}^2+\frac{k^2}{4}\sum_{n=1}^\ell\|\nabla \widetilde\Delta_h^{-1}d_t\theta^n\|_{0,2,h}^2\\
&\quad+k\sum_{n=1}^\ell\bigl[(\epsilon-\epsilon^4)(\nabla\theta^n,\nabla\theta^n)_h+
\frac{1}{\epsilon}(f'(P_hu(t_n)))\theta^n,\theta^n)_h\bigr]\notag\\
&\quad+\frac{\epsilon^4k}{8}\sum_{n=1}^\ell(\nabla\theta^n,\nabla\theta^n)_h+\frac{k}{\epsilon}\sum_{n=1}^\ell\|\theta^n\|_{0,4,h}^4\notag\\
&\le C\tilde\rho_1(\epsilon)k^2+\frac{Ck}{\epsilon^{2\sigma_1+12}}\sum_{n=1}^\ell\|\nabla\widetilde\Delta_h^{-1}\theta^n\|_{0,2,h}^3+Ck\sum_{n=1}^\ell\|\nabla\widetilde\Delta_h^{-1}\theta^n\|_{0,2,h}^2\notag\\
&\quad+C\tilde\rho_0(\epsilon)|\ln h|h^2\notag.
\end{align}

Using the generalized coercivity result in Theorem \ref{thm3.7_add}, we obtain when $h\le C\epsilon^2$, we have
\begin{align}\label{eq20170808_4}
&\frac{1}{2}\|\nabla \widetilde\Delta_h^{-1}\theta^{\ell}\|_{0,2,h}^2+\frac{k^2}{4}\sum_{n=1}^\ell\|\nabla \widetilde\Delta_h^{-1}d_t\theta^n\|_{0,2,h}^2\\
&\quad+\frac{\epsilon^4k}{8}\sum_{n=1}^\ell(\nabla\theta^n,\nabla\theta^n)_h+\frac{k}{\epsilon}\sum_{n=1}^\ell\|\theta^n\|_{0,4,h}^4\notag\\
&\le C\tilde\rho_1(\epsilon)k^2+\frac{Ck}{\epsilon^{2\sigma_1+12}}\sum_{n=1}^\ell\|\nabla\widetilde\Delta_h^{-1}\theta^n\|_{0,2,h}^3+Ck\sum_{n=1}^\ell\|\nabla\widetilde\Delta_h^{-1}\theta^n\|_{0,2,h}^2\notag\\
&\quad+C\tilde\rho_0(\epsilon)|\ln h|h^2+Ck\sum_{n=1}^\ell\|\nabla\Delta^{-1}\theta^n\|_{L^2}^2+Ch^2\epsilon^{-2\gamma_2-4}\notag\\
&\le C\tilde\rho_1(\epsilon)k^2+\frac{Ck}{\epsilon^{2\sigma_1+12}}\sum_{n=1}^\ell\|\nabla\widetilde\Delta_h^{-1}\theta^n\|_{0,2,h}^3+Ck\sum_{n=1}^\ell\|\nabla\widetilde\Delta_h^{-1}\theta^n\|_{0,2,h}^2\notag\\
&\quad+\frac{\epsilon^4k}{16}\sum_{n=1}^\ell(\nabla\theta^n,\nabla\theta^n)_h+C\tilde\rho_0(\epsilon)|\ln h|h^2\notag.
\end{align}

By the discrete energy law and Theorem \ref{thm20180517_1}, when $k\le C\epsilon^{3\sigma_1+13}$, we have
\begin{align}\label{eq20170808_5}
&\frac{1}{4}\|\nabla \widetilde\Delta_h^{-1}\theta^{\ell}\|_{0,2,h}^2+\frac{k^2}{4}\sum_{n=1}^\ell\|\nabla \widetilde\Delta_h^{-1}d_t\theta^n\|_{0,2,h}^2\\
&\quad+\frac{\epsilon^4k}{16}\sum_{n=1}^\ell(\nabla\theta^n,\nabla\theta^n)_h+\frac{k}{\epsilon}\sum_{n=1}^\ell\|\theta^n\|_{0,4,h}^4\notag\\
&\le C\tilde\rho_1(\epsilon)k^2+\frac{Ck}{\epsilon^{2\sigma_1+12}}\sum_{n=1}^{\ell-1}\|\nabla\widetilde\Delta_h^{-1}\theta^n\|_{0,2,h}^3+Ck\sum_{n=1}^{\ell-1}\|\nabla\widetilde\Delta_h^{-1}\theta^n\|_{0,2,h}^2\notag\\
&\quad+C\tilde\rho_0(\epsilon)|\ln h|h^2\notag.
\end{align}

Let $d_{\ell}\geq 0$ be the slack variable such that
\begin{align}\label{eq20170808_6}
&\frac{1}{4}\|\nabla \widetilde\Delta_h^{-1}\theta^{\ell}\|_{0,2,h}^2+\frac{k^2}{4}\sum_{n=1}^\ell\|\nabla \widetilde\Delta_h^{-1}d_t\theta^n\|_{0,2,h}^2\\
&\quad+\frac{\epsilon^4k}{16}\sum_{n=1}^\ell(\nabla\theta^n,\nabla\theta^n)_h+\frac{k}{\epsilon}\sum_{n=1}^\ell\|\theta^n\|_{0,4,h}^4+d_{\ell}\notag\\
&= \frac{Ck}{\epsilon^{2\sigma_1+12}}\sum_{n=1}^{\ell-1}\|\nabla\widetilde\Delta_h^{-1}\theta^n\|_{0,2,h}^3+Ck\sum_{n=1}^{\ell-1}\|\nabla\widetilde\Delta_h^{-1}\theta^n\|_{0,2,h}^2\notag\\
&\quad+C\tilde\rho_1(\epsilon)k^2+C\tilde\rho_0(\epsilon)|\ln h|h^2\notag.
\end{align}
and define for $\ell\geq1$
\begin{align}\label{eq3.55}
S_{\ell+1}:&= \frac{1}{4}\|\nabla \widetilde\Delta_h^{-1}\theta^{\ell}\|_{0,2,h}^2+\frac{k^2}{4}\sum_{n=1}^\ell\|\nabla \widetilde\Delta_h^{-1}d_t\theta^n\|_{0,2,h}^2\\
&\quad+\frac{\epsilon^4k}{16}\sum_{n=1}^\ell(\nabla\theta^n,\nabla\theta^n)_h+\frac{k}{\epsilon}\sum_{n=1}^\ell\|\theta^n\|_{0,4,h}^4+d_{\ell}\notag\\
S_{1}:&=C\tilde\rho_1(\epsilon)k^2+C\tilde\rho_0(\epsilon)|\ln h|h^2,
\end{align}
then we have
\begin{equation}\label{eq3.56}
S_{\ell+1}-S_{\ell}\leq CkS_{\ell}+\frac{Ck}{\epsilon^{2\sigma_1+12}}S_{\ell}^{\frac32}\qquad\text{for}\ \ell\geq1.
\end{equation}

Applying Lemma \ref{lem2.1} to $\{S_\ell\}_{\ell\geq 1}$ defined above, 
we obtain for $\ell\geq1$
\begin{equation}\label{eq3.58}
S_{\ell}\leq a^{-1}_{\ell}\Bigg\{S^{-\frac12}_{1}-\frac{Ck}{\epsilon^{2\sigma_1+12}}\sum_{s=1}^{\ell-1} a^{-\frac12}_{s+1}\Bigg\}^{-2}
\end{equation}
provided that
\begin{equation}\label{eq3.59}
S^{-\frac12}_{1}-\frac{Ck}{\epsilon^{2\sigma_1+12}}\sum_{s=1}^{\ell-1} a^{-\frac15}_{s+1}>0.
\end{equation}
We note that $a_s\, (1\leq s\leq \ell)$ are all bounded as $k\rightarrow0$, 
therefore, \eqref{eq3.59} holds under the mesh constraint stated in the theorem. Then it follows from \eqref{eq3.58} and \eqref{eq3.59} that 
\begin{equation*}%\label{eq3.61}
S_{\ell}\leq 2a_\ell^{-1} S_1
\leq C(\tilde\rho_0(\epsilon)|\ln h|h^2+\tilde\rho_1(\epsilon)k^2).
\end{equation*} 

Then the theorem is proved. Notice the mesh restrictions are stringent theoretically, and numerically they are much better.
\end{proof}

Next we gave a Corollary based on Theorem \ref{thm20171007_1}, Lemmas \ref{lem20170710_1_add}--\ref{lem20171106_2}, and the triangle inequality.
\begin{corollary}
Assume the mesh constraints in Theorem \ref{thm20171007_1} hold, then the following estimates hold
\begin{align*}
\|\nabla \hat\Delta_h^{-1}\theta^\ell\|_{0,2,h}^2&\leq C(\tilde\rho_0(\epsilon)|\ln h|h^2+\tilde\rho_1(\epsilon)k^2),\\
\|\nabla \Delta_h^{-1}\theta^\ell\|_{0,2,h}^2&\leq C(\tilde\rho_0(\epsilon)|\ln h|h^2+\tilde\rho_1(\epsilon)k^2),\\
\|\nabla \underline\Delta_h^{-1}\theta^\ell\|_{0,2,h}^2&\leq C(\tilde\rho_0(\epsilon)|\ln h|h^2+\tilde\rho_1(\epsilon)k^2),\\
\|\nabla \Delta^{-1}\theta^\ell\|_{0,2,h}^2&\leq C(\tilde\rho_0(\epsilon)|\ln h|h^2+\tilde\rho_1(\epsilon)k^2).\\
\end{align*}
\end{corollary}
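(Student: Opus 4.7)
The plan is to derive each of the four bounds by combining Theorem \ref{thm20171007_1}, which already controls $\|\nabla\widetilde{\Delta}_h^{-1}\theta^\ell\|_{0,2,h}^2$, with the triangle inequality and one of the operator-comparison lemmas (\ref{lem20170710_1_add}, \ref{lem20170914_1}, \ref{lem20171106_1}, \ref{lem20171106_2}). Since $\theta^\ell=P_hu(t_\ell)-u_h^\ell\in S_E^h$, each of these lemmas applies directly and yields a difference that is $O(h)$ times a Sobolev norm of $\theta^\ell$.

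For the $\Delta^{-1}$ bound I would write
\begin{equation*}
\|\nabla\Delta^{-1}\theta^\ell\|_{0,2,h}\le \|\nabla\widetilde{\Delta}_h^{-1}\theta^\ell\|_{0,2,h}+\|\widetilde{\Delta}_h^{-1}\theta^\ell-\Delta^{-1}\theta^\ell\|_{1,2,h},
\end{equation*}
use Lemma \ref{lem20170914_1} on the second term to get an upper bound of $Ch\|\theta^\ell\|_{1,2,h}$, and then control $\|\theta^\ell\|_{1,2,h}$ by Theorem \ref{thm20171006_3} (or its obvious $H^1$ consequence), which gives a polynomial factor $C\epsilon^{-\gamma_2}$. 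Squaring and adding the square of the first term reduces the right-hand side to $C\|\nabla\widetilde{\Delta}_h^{-1}\theta^\ell\|_{0,2,h}^2+Ch^2\epsilon^{-2\gamma_2}$, and the extra term $h^2\epsilon^{-2\gamma_2}$ is then absorbed into $\tilde\rho_0(\epsilon)|\ln h|h^2$ since $\tilde\rho_0(\epsilon)$ already carries a stronger negative power of $\epsilon$ by its definition. Applying Theorem \ref{thm20171007_1} finishes this case.

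For $\hat{\Delta}_h^{-1}$ I would use Lemma \ref{lem20170710_1_add} instead, inserting $\Delta^{-1}\theta^\ell$ as an intermediate term: $\|\nabla\hat{\Delta}_h^{-1}\theta^\ell\|_{0,2,h}\le \|\nabla\Delta^{-1}\theta^\ell\|_{0,2,h}+Ch\|\theta^\ell\|_{1,2,h}$, and then quote the $\Delta^{-1}$ estimate just proved. For $\Delta_h^{-1}$ and $\underline{\Delta}_h^{-1}$ the analogous chain goes through Lemmas \ref{lem20171106_2} and \ref{lem20171106_1}, where the remainders involve $\|\theta^\ell\|_{0,2,h}$ rather than $\|\theta^\ell\|_{1,2,h}$, so they are actually easier; the $\|\theta^\ell\|_{0,2,h}$ factor is controlled by Theorem \ref{thm20180517_1} (in particular through $\|u_h^\ell\|_{0,2,h}$ and the boundedness of $P_hu$) and again absorbed into $\tilde\rho_0(\epsilon)|\ln h|h^2$.

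The only subtle point is bookkeeping: one must verify that the extra $h^2\epsilon^{-2\gamma_2}$ (or $h^2\epsilon^{-2\sigma_1-2}$) produced by the triangle-inequality remainder is dominated by $\tilde\rho_0(\epsilon)|\ln h|h^2$ under the mesh constraints of Theorem \ref{thm20171007_1}; this is the main, but essentially routine, obstacle, and it reduces to comparing the exponents appearing in the definition of $\tilde\rho_0$ with $2\gamma_2=4\gamma_1+2\sigma_1+12$. No new spectral or stability estimates are needed; the corollary is a pure post-processing of Theorem \ref{thm20171007_1} via the operator-comparison lemmas of Subsection~\ref{subsec3_3}.
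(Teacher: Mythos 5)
Your proposal is correct and follows essentially the same route as the paper, which states the corollary follows precisely from Theorem \ref{thm20171007_1}, Lemmas \ref{lem20170710_1_add}--\ref{lem20171106_2}, and the triangle inequality, without giving further detail. Your chaining of the operator-comparison lemmas, control of the remainders $Ch\|\theta^\ell\|_{1,2,h}$ and $Ch\|\theta^\ell\|_{0,2,h}$ via the stability results, and absorption of the resulting $h^2\epsilon^{-2\gamma_2}$ term into $\tilde\rho_0(\epsilon)|\ln h|h^2$ matches the paper's (implicit) argument, including the same absorption convention the paper itself uses in passing from \eqref{eq20170808_4} to \eqref{eq20170808_5}.
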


\begin{remark}
\begin{enumerate}
%\item The order in space is just $h^2$, but this is the first error estimate depends on the polynomial of $\frac{1}{\epsilon}$, and this is crucial to prove the $\|\cdot\|_{0,2,h}$, $\|\cdot\|_{1,2,h}$, $\|\cdot\|_{0,\infty,h}$ and $\|\cdot\|_{2,2,h}$ error estimates in lower order polynomial of $\frac{1}{\epsilon}$.
\item All mesh restrictions have been incorporated into Theorem \ref{thm20171007_1}. For example, $h\le C\epsilon^4$ can be incorporated into $h\le C\epsilon^{4\gamma_1+4}$ since $\gamma_1\ge0$, $k\ge C\frac{h^4}{\epsilon^{4+4\gamma_1+2\sigma_1}}(\ln\,\frac1h)^2$ in Theorem \ref{thm20171006_3} can be incorporated into $k\ge C\frac{h^2}{\epsilon^{4\gamma_1+3}}$ and $h^2(\ln\,\frac1h)^2\le\epsilon^{2\sigma_1+1}$, and so on.

\item If $v_h=-\hat\Delta_h^{-1}\theta^n$, instead of $v_h=-\widetilde\Delta_h^{-1}\theta^n$, is chosen as the test function in \eqref{eq20170504_17}, the error estimate can not be obtained due to other terms in the definition of $-\hat\Delta_h^{-1}$.
\end{enumerate}
\end{remark}

\section{Numerical Experiments}\label{sec4}
In this section, we present two numerical tests to gauge the performance 
of the Morley element approximation. The fully implicit scheme and the square domain $\Omega=[-1,1]^2$ are used in both tests. The degrees of freedom (DOF) are compared for quadratic mixed discontinuous Galerkin method (MDG), $C^1$ conforming Argyris element, $C^1$ conforming Hsieh-Clough-Tocher (HCT) macro element, and Morley element in Table \ref{tab1} below. From the angle of degrees of freedom, Morley element method is supposed to be very efficient.

\begin{table}[H]%[htb]
\large
\begin{center}
\begin{tabular}{cccccccccc}
\hline
& MDG & Argyris & HCT & Morley \\ \hline
$h=0.4$ & 1200 & 526 & 343 & 221\\ 
$h=0.2$ &4800  & 1946 & 1283 & 841  \\ 
$h=0.1$ & 19200 & 7486 & 4963  & 3281\\ 
$h=0.05$ & 76800 & 29366 & 19523 & 12961 \\ 
$h=0.025$ & 307200 & 116326 & 77443 & 51521 \\ \hline
\end{tabular}
\smallskip
\caption{Approximate number of DOF using MDG, Argyris, HCT and Morley elements.} 
\label{tab1} 
\end{center}
\end{table}

Next, two numerical tests are presented to numerically check the discrete maximum principle, which is not known theoretically. See \cite{wu2018analysis} for evolutions of the zero-level sets of the Cahn-Hilliard equation using the Morley elements based on more different initial conditions.

$\mathbf{Test\, 1.}$ Consider the Cahn-Hilliard equations (\ref{eq20170504_1})-(\ref{eq20170504_5}) with the following initial condition:
\begin{equation}\label{eq20180716_1}
 u_0(x)=\tanh\Bigl(\frac{d_0(x)}{\sqrt{2}\eps}\Bigr),
\end{equation}
where $d_0(x)=\sqrt{x_1^2+x_2^2}-0.5$, which is the signed distance from any point to the circle $x_1^2+x_2^2=0.5^2$. Note that $u_0$ has the desired form as stated in Lemma \ref{lem3.4}.

Figure \ref{fig1} plots the zero-level set of this initial condition and $L^\infty$ bound $|u_h^n|_{L^{\infty}}$. We can observe that $|u_h^n|_{L^{\infty}}\le1$, which numerically verifies the assumption \eqref{eq20180425_5}. In this test, the interaction length $\epsilon=0.05$, the space size $h=0.04$ and the time step size $k=0.0001$.

\begin{figure}[tbh]
   \centering
   \includegraphics[width=2.1in,,height=1.8in]{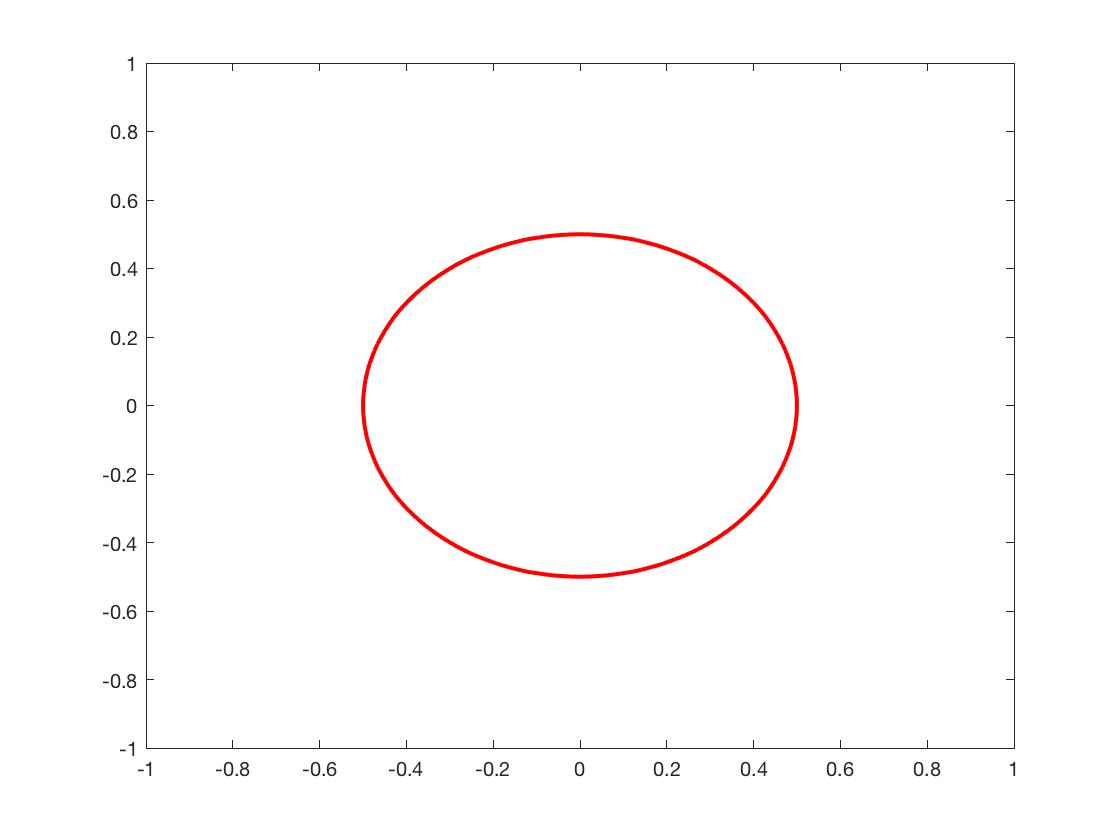} 
   \includegraphics[width=2.1in,,height=1.8in]{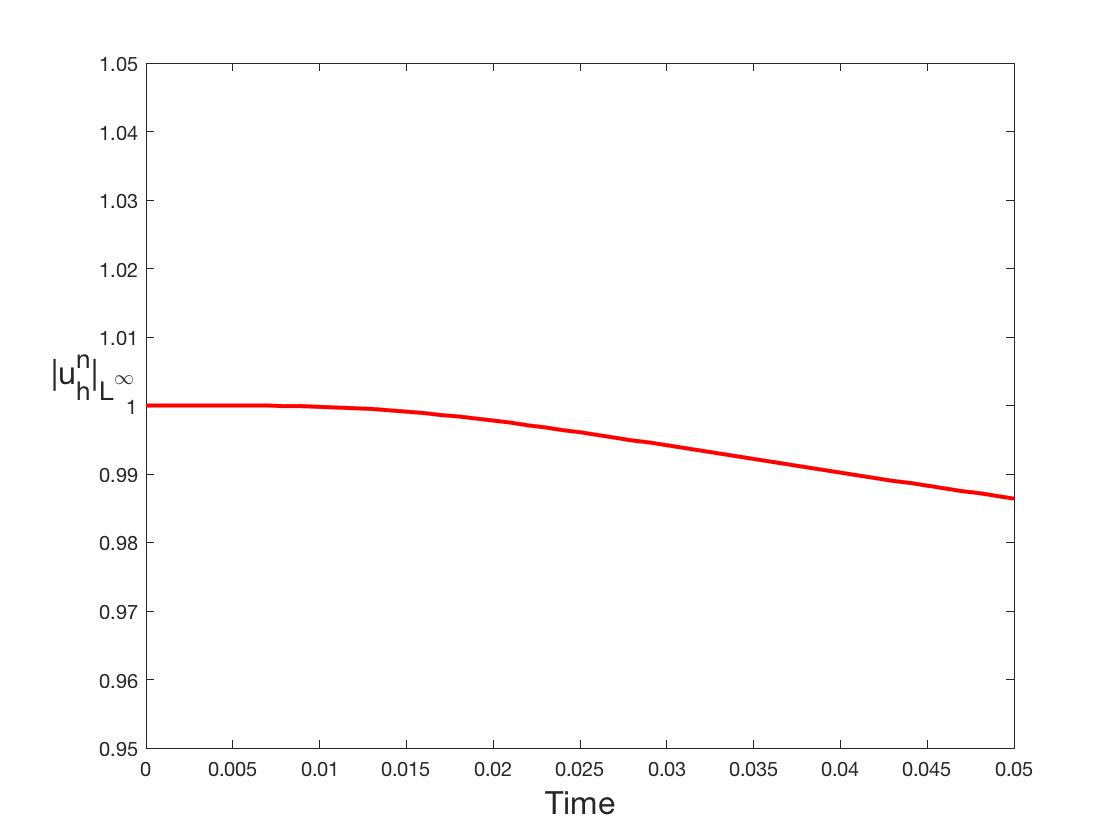}   
   \caption{The zero-level set of the initial condition (left) and the $|u_h^n|_{L^{\infty}}$ bound at different time points (right). In this test, $\epsilon=0.05,\,h=0.04,\,k=0.0001$.} \label{fig1}
\end{figure}
 
\medskip
$\mathbf{Test\, 2.}$ Consider the Cahn-Hilliard equations (\ref{eq20170504_1})-(\ref{eq20170504_5}) with the following initial condition:
\begin{equation*}
u_0(x)=\tanh\Bigl(\frac{1}{\sqrt{2}\eps}
\bigl(\min\bigl\{\sqrt{(x_1+0.3)^2+x_2^2}-0.3,\sqrt{(x_1-0.3)^2+x_2^2}-0.25\bigr\}\bigr)\Bigr).
\end{equation*}
Note that $u_0$ can be written in the form given in \eqref{eq20180716_1}
with $d_0(x)$ being the signed distance function to the initial curve. We note that $u_0$ does not have the desired form as stated in Lemma \ref{lem3.4}.

Figure \ref{fig2} plots the zero-level set of this initial condition and $L^\infty$ bound $|u_h^n|_{L^{\infty}}$. We can observe that $|u_h^n|_{L^{\infty}}\le 1$, which numerically verifies the assumption \eqref{eq20180425_5}. In this test, the interaction length $\epsilon=0.025$, the space size $h=0.02$, and the time step size $k=0.0001$.
\begin{figure}[tbh]%[H]
   \centering
   \includegraphics[width=2.1in,,height=1.8in]{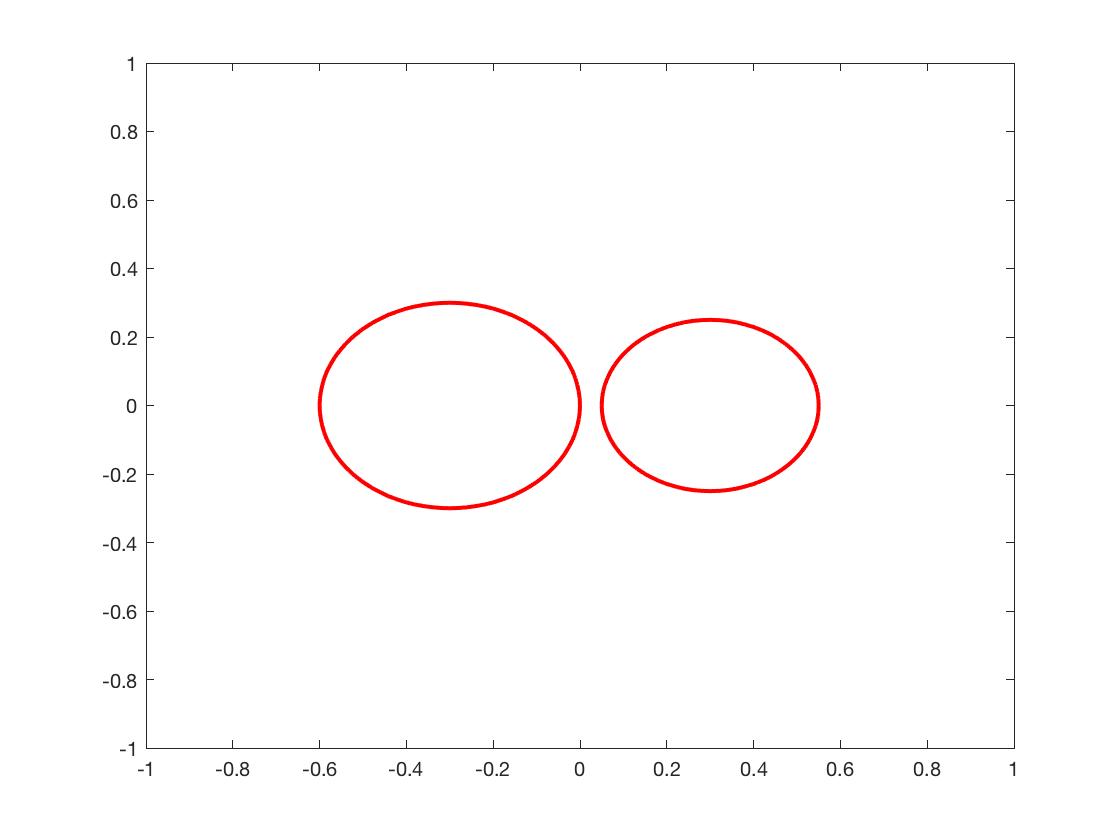} 
   \includegraphics[width=2.1in,,height=1.8in]{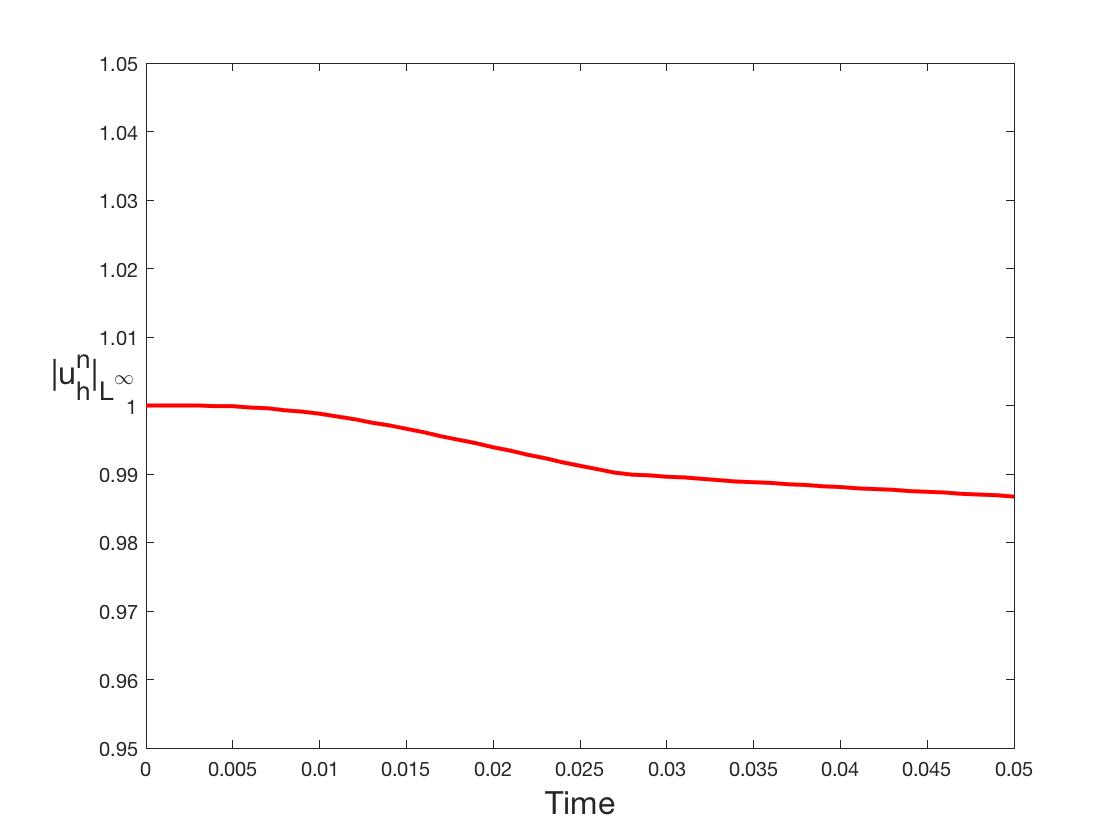}   
   \caption{The zero-level set of the initial condition (left) and the $|u_h^n|_{L^{\infty}}$ bound at different time points (right). In this test, $\epsilon=0.025,\,h=0.02,\,k=0.0001$.} \label{fig2}
\end{figure}

\section*{Acknowledgements}
The author Yukun Li highly thanks Professor Xiaobing Feng in the University of Tennessee at Knoxville for his valuable suggestions during the whole process of preparation of this manuscript, and Dr. Shuonan Wu in the Peking University for proofreading this manuscript carefully and giving lots of useful suggestions.

%\input{refs}
%\section*{Conflict of Interest}
%The author declares that he has no conflict of interest.

%\bibliographystyle{amsplain}

%\clearpage

\end{document}